\newtheorem{theorem}{Theorem}[section]
\newtheorem{lemma}[theorem]{Lemma}
\newtheorem{corollary}[theorem]{Corollary}
\newtheorem{proposition}[theorem]{Proposition}
\newtheorem{conjecture}[theorem]{Conjecture}
\newtheorem{theirtheorem}{Theorem}
\newcommand{\Z}{\mathbb Z}
\DeclareMathOperator{\ord}{ord}
\DeclareMathOperator{\supp}{Supp}
\newcommand{\la}{\langle}
\newcommand{\ra}{\rangle}
\newcommand{\be}{\begin{equation}}
\newcommand{\ee}{\end{equation}}
\newcommand{\und}{\;\mbox{ and }\;}
\newcommand{\ber}{\begin{eqnarray}}
\newcommand{\eer}{\end{eqnarray}}
\newcommand{\Sum}[2]{\underset{#1}{\overset{#2}{\sum}}}
\newcommand{\Summ}[1]{\underset{#1}{\sum}}
\newcommand{\wtilde}{\widetilde}
\newcommand{\Fc}{\mathcal F}
\newcommand{\vp}{\mathsf v}
\newcommand{\h}{\mathsf h}
\DeclareSymbolFont{goo}{OMS}{cmsy}{b}{n}
\DeclareMathSymbol{\gooT}{\mathalpha}{goo}{"1}
\newcommand{\bdot}{\mathbin{\gooT}}
\begin{document}

\title[A Multiplicative Property for Zero-Sums I]{A  Multiplicative Property for Zero-Sums I}
\author{David J. Grynkiewicz}
\address{Department of Mathematical Sciences\\ University of Memphis\\ Memphis, TN 38152\\
USA}
\email{diambri@hotmail.com}
\author{Chao Liu}

\address{Department of Mathematical Sciences\\ University of Memphis\\ Memphis, TN 38152\\
USA}
\email{chaoliuac@gmail.com}

\subjclass[2010]{11B75}
\keywords{Zero-Sum, Davenport Constant, Short Zero-Sum, Sequence Subsum}

\begin{abstract}
Let $G=(\mathbb Z/n\mathbb Z)\times(\mathbb Z/n\mathbb Z)$ and let $k\in [0,n-1]$. We study the structure of sequences of terms from $G$ with maximal length $|S|=2n-2+k$ that fail to contain a nontrivial zero-sum subsequence of length at most $2n-1-k$. For $k\leq 1$, this is the inverse question for the Davenport Constant. For $k=n-1$, this is the inverse question for the $\eta(G)$ invariant concerning short zero-sum subsequences. The structure in both these cases (known respectively as Property B and Property C) was established in a two-step process: first verifying the multiplicative property that, if the structural description holds when $n=n_1$ and $n=n_2$, then it holds when $n=n_1n_2$, and then resolving the case $n$ prime separately. When $n$ is prime, the structural characterization for $k\in [2,\frac{2n+1}{3}]$ was recently established, showing $S$ must have the form $S=e_1^{[n-1]}\boldsymbol{\cdot}e_2^{[n-1]}\boldsymbol{\cdot} (e_1+e_2)^{[k]}$ for some basis $(e_1,e_2)$ for $G$. It was conjectured that this also holds for $k\in [2,n-2]$ (when $n$ is prime). In this paper, we extend this conjecture  by dropping  the restriction that $n$ be prime and  establish the following multiplicative result. Suppose $k=k_mn+k_n$ with $k_m\in [0,m-1]$ and $k_n\in [0,n-1]$. If the conjectured structure holds for $k_m$ in $(\mathbb Z/m\mathbb Z)\times (\mathbb Z/m\mathbb Z)$ and for $k_n$ in $(\mathbb Z/n\Z)\times (\mathbb Z/n\mathbb Z)$, then it holds for $k$ in $(\mathbb Z/mn\mathbb Z)\times (\mathbb Z/mn\mathbb Z)$. This reduces the full characterization question for $n$ and $k$ to the prime case. Combined with known results, this unconditionally establishes the structure for extremal sequences in $G=(\mathbb Z/n\mathbb Z)\times (\mathbb Z/n\mathbb Z)$ in many cases, including when  $n$ is only divisible by primes at most $7$, when $n\geq 2$ is a prime power and $k\leq \frac{2n+1}{3}$, or when  $n$ is composite and $k=n-d-1$ or $n-2d+1$ for a proper, nontrivial divisor $d\mid n$.
\end{abstract}

\maketitle

\section{Introduction and Preliminaries}

Regarding combinatorial notation for sequences and subsums, we utilize the standardized system surrounding multiplicative strings as outlined in the references \cite{Ger-Ruzsa-book} \cite{Alfredbook} \cite{Gbook}. For the reader new to this notational system, we begin with a self-contained review.

\subsection*{Notation} All intervals will be discrete, so for $x,\,y\in \Z$, we have $[x,y]=\{z\in \Z:\; x\leq z\leq y\}$. More generally, if $G$ is an abelian group, $g\in G$, and $x,\,y\in \Z$, then
$$[x,y]_g=\{xg,(x+1)g,\ldots, yg\}.$$ For $G=C_n\oplus C_n$
a (ordered) \textbf{basis} for $G$ is a pair $(e_1,e_2)$ of elements $e_1,e_2\in G$ such that $G=\la e_1\ra\oplus \la e_2\ra=C_n\oplus C_{n}$. For subsets $A_1,\ldots,A_k\subseteq G$, their sumset is defined as $A_1+\ldots+A_k=\{a_1+\ldots+a_k:\; a_i\in A_i\mbox{ for $i\in [1,k]$}\}$.

Let $G$ be an abelian group. In the tradition of Combinatorial Number Theory, a sequence of terms from $G$ is a finite, unordered string of elements from $G$. We let $\Fc(G)$ denote the free abelian monoid with basis $G$, which consists of all (finite and unordered) sequences $S$ of terms from $G$ written as multiplicative strings using the boldsymbol $\bdot$ . This means a sequence $S\in \Fc(G)$ has the form  $$S=g_1\bdot\ldots\bdot g_\ell$$ with $g_1,\ldots,g_\ell\in G$ the terms in $S$.
Then $$\vp_g(S)=|\{i\in [1,\ell]:\; g_i=g\}|$$ denotes the multiplicity of the terms $g$ in $S$, allowing us to represent a sequence $S$ as $$S={\prod}^\bullet_{g\in G}g^{[\vp_g(S)]},$$ where $g^{[n]}={\underbrace{g\bdot\ldots\bdot g}}_n$ denotes a sequence consisting of the term $g\in G$ repeated $n\geq 0$ times.
 The maximum multiplicity of a term of $S$ is the height of the sequence, denoted $$\h(S)=\max\{\vp_g(S):\; g\in G\}.$$ The support of the sequence $S$ is the subset of all elements of $G$ that are contained in $S$, that is, that occur with positive multiplicity in $S$, which is denoted $$\supp(S)=\{g\in G:\; \vp_g(S)>0\}.$$
 The length of the sequence $S$ is $$|S|=\ell=\Summ{g\in G}\vp_g(S).$$  A sequence $T\in \Fc(G)$ with $\vp_g(T)\leq \vp_g(S)$ for all $g\in G$ is called a subsequence of $S$, denoted $T\mid S$, and in such case, $S\bdot T^{[-1]}=T^{[-1]}\bdot S$ denotes the subsequence of $S$ obtained by removing the terms of $T$ from $S$, so $\vp_g(S\bdot T^{[-1]})=\vp_g(S)-\vp_g(T)$ for all $g\in G$.

 Since the terms of $S$ lie in an abelian group, we have the following notation regarding subsums of terms from $S$. We let $$\sigma(S)=g_1+\ldots+g_\ell=\Summ{g\in G}\vp_g(S)g$$ denote the sum of the terms of $S$ and call $S$ a \textbf{zero-sum} sequence when $\sigma(S)=0$. A \textbf{minimal zero-sum} sequence is a zero-sum sequence that cannot have its terms partitioned into two proper, nontrivial zero-sum subsequences.  For $n\geq 0$, let
 \begin{align*}&\Sigma_n(S)=\{\sigma(T):\; T\mid S, \; |T|=n\},\quad
 \Sigma_{\leq n}(S)=\{\sigma(T):\; T\mid S, \; 1\leq |T|\leq n\},\quad\und\quad\\
 &\Sigma(S)=\{\sigma(T):\; T\mid S, \; |T|\geq 1\}
\end{align*}
 denote the variously restricted collections of  subsums of $S$.
 The sequence $S$ is \textbf{zero-sum free} if $0\notin \Sigma(S)$.
 Finally, if $\varphi:G\rightarrow G'$ is a map, then $$\varphi(S)=\varphi(g_1)\bdot\ldots\bdot \varphi(g_\ell)\in \Fc(G')$$ denotes the sequence of terms from $G'$ obtained by applying $\varphi$ to each term from $S$.

\subsection*{Background} Let $G$ be a finite abelian group. A classic topic in Combinatorial Number Theory is the study of conditions on sequences that ensure the existence of zero-sum subsequences with prescribed properties. Apart from the intrinsic combinatorial interest in such questions, they are also important when studying properties of factorization in Krull Domains and, more generally, in (Transfer) Krull Monoids. See \cite{Alfredbook} \cite{Ger-Ruzsa-book}.

The most classic zero-sum invariant is the Davenport Constant $\mathsf D(G)$, defined as the minimal length  such that any sequence of terms from $G$ with length at least $\mathsf D(G)$ contains a nontrivial zero-sum subsequence. It is well-known  that $\mathsf D(G)$ can be equivalently defined as  the maximal length of a minimal zero-sum sequence. Indeed, $\mathsf D(G)-1$ is, by definition, the maximal length of a zero-sum free sequence $T$, and then one readily notes that $T\bdot -\sigma(T)$ will be a minimal zero-sum sequence of length $\mathsf D(G)$. This shows there are minimal zero-sums of length $\mathsf D(G)$. Conversely, if $S$ is any minimal zero-sum, then $S\bdot g^{[-1]}$ is zero-sum free for any $g\in \supp(G)$, ensuring no minimal zero-sum can have length exceeding $\mathsf D(G)$.

The precise value of $\mathsf D(G)$ is open in general and known only for a few small families of abelian groups, including $p$-groups and  groups of rank at most two \cite{Alfredbook}. In particular \cite[Theorem 5.8.3]{Alfredbook}, $$\mathsf D(C_n\oplus C_{n})=2n-1$$ for $n\geq 1$. This is an old result of Olson  \cite{olson-rk2} or van Emde Boas and Kruyswijk \cite{Boas-K-rk2-Dav} whose proof required a more refined constant $\eta(G)$, defined as the  minimal length  such that any sequence of terms from $G$ with length at least $\eta(G)$ contains a nontrivial zero-sum subsequence of length at most $\exp(G)$. For $G=C_n\oplus C_n$, we have \cite{olson-rk2} \cite{Boas-K-rk2-Dav} \cite[Theorem 5.8.3]{Alfredbook} $$\eta(C_n\oplus C_{n})=3n-2.$$

As a special case of a more general constant \cite{Cohen} \cite{Cong-zs}, Delorme, Ordaz and Quiroz introduced \cite{S_<k-invariant-origin-delorme-ordaz} the refined constant $\mathsf s_{\leq \ell}(G)$ defined as the minimal length such that any sequence of terms from $G$ with length at least $\mathsf s_{\leq \ell}(G)$ contains a nontrivial zero-sum subsequence of length at most $\ell$, i.e., $$|S|\geq \mathsf s_{\leq \ell}(G)\quad \mbox{ implies }\quad 0\in \Sigma_{\leq \ell}(S).$$
Relations between $\mathsf s_{\leq \ell}(G)$ and Coding Theory may be found in \cite{Cohen}, and other related works dealing with $\mathsf s_{\leq \ell}(G)$ include \cite{freeze}
\cite{Roy-s<k} \cite{Gao-liu}.
When $\ell<\exp(G)$, we have  $\mathsf s_{\leq \ell}(G)=\infty$; when $\ell=\exp(G)$, we have  $\mathsf s_{\leq \ell}(G)=\eta(G)$; and when $\ell\geq \mathsf D(G)$, we have $\mathsf s_{\leq \ell}(G)=\mathsf D(G)$. Thus, concerning the constant $\mathsf s_{\leq \ell}(G)$, the range of interest is $\ell\in [\exp(G),\mathsf D(G)]$, and $\mathsf s_{\leq \ell}(G)$ interpolates between the well-studied invariants  $\eta(G)$ and $\mathsf D(G)$.
For the case of $G=C_n\oplus C_n$, Chulin Wang and Kevin Zhao determined the exact value of $s_{\leq \ell}(G)$,
showing \cite{kevin-S_<k-invariant}
\begin{align*}\mathsf s_{\leq D-k}(C_{n}\oplus C_{n})=D+k,\quad  \mbox{ for $k\in [0,D-\exp(G)]$,}\end{align*} where $D=\mathsf D(C_{n}\oplus C_{n})$. Since $\mathsf D(C_{n}\oplus C_{n})=2n-1$, this can be restated as \begin{align*}\mathsf s_{\leq 2n-1-k}(C_{n}\oplus C_{n})=2n-1+k,\quad  \mbox{ for $k\in [0,n-1]$.}\end{align*}

With the value $\mathsf s_{\leq 2n-1-k}(C_n\oplus C_{n})=2n-1+k$ established, there arises the associated inverse question characterizing all extremal sequences having  maximal length
$2n-2+k=\mathsf s_{\leq 2n-1-k}(C_n\oplus C_{n}) -1$ with $0\notin \Sigma_{\leq 2n-1-k}(S)$. For $k=0$, this amounts to  characterizing all zero-sum free sequences of maximal length $2n-2=\mathsf D(G)-1$.
For $k=1$, this amounts to characterizing all minimal zero-sum sequences of maximal length $2n-1=\mathsf D(G)$. In view of our previous commentary,  these two cases are  equivalent to each other, the extremal sequences for $k=0$ being simply the extremal sequences for $k=1$ with any one term removed.  For $k=n-1$, this amounts to characterizing all extremal sequences of length  $3n-3=\eta(G)-1$ with $0\notin \Sigma_{\leq n}(S)$.

The precise structure in both the case $k\leq 1$ and the case $k=n-1$ is known. For $k\leq 1$, this is achieved by combining the individual results of Gao, Geroldinger, Grynkiewicz and Reiher  from \cite{propBGaoGer-multof2} \cite{PropB}  \cite{PropBfix} \cite{Reiher-propB}  with the numerical verification of the case when $n=9$ \cite{PropB-smallcase}. The characterization of the extremal sequences for the Davenport constant has since proved quite useful, for instance being employed as machinery for the results in  \cite{propB-app4}  \cite{PropB-app6} \cite{PropB-app7}  \cite{PropB-app2} \cite{propB-app1} \cite{schmid-propC-exact} \cite{PropB-app8}.
Since we will need to use both known  cases heavily, we introduce  some terminology.

A sequence $S$ of terms from $G=C_n\oplus C_{n}$ is said to have \textbf{Property A} if there is a basis $(e_1,e_2)$ for $G=C_n\oplus C_{n}$ such that $\supp(S)\subseteq \{e_1\}\cup \big(\la e_1\ra+e_2\big)$. We say that the \emph{group} $G=C_n\oplus C_{n}$ has \textbf{Property A} if every minimal zero-sum sequence $S$ with $|S|=\mathsf D(G)=2n-1$ satisfies Property A.
A sequence $S$ of terms from $G=C_n\oplus C_{n}$ is said to have \textbf{Property B} if $\mathsf h(S)=\exp(G)-1=n-1$, that is, $S$ has some term $e_1$ with multiplicity $n-1$. We say that the \emph{group} $G=C_n\oplus C_{n}$ has \textbf{Property B} if every minimal zero-sum sequence $S$ with $|S|=\mathsf D(G)=2n-1$ satisfies Property B.
A simple argument shows that a minimal zero-sum sequence $S$ with $|S|=\mathsf D(G)=2n-1$ satisfying  Property A with basis $(e_1,e_2)$ has the form  \be\label{mzf-form}S=e_1^{n-1}\bdot {\prod}^\bullet_{i\in [1,n]}(x_ie_1+e_2)\ee for some $x_1,\ldots,x_n\in [0,n-1]$ with $x_1+\ldots+x_n\equiv 1\mod n$. In particular, $S$ satisfies Property B.  It is also not hard to show (see \cite{PropBfix}) that a minimal zero-sum sequence $S$ with $|S|=\mathsf D(G)=2n-1$ that satisfies Property B, say with $\vp_{e_1}(S)=\mathsf h(S)=n-1$, has a basis  $(e_1,e_2)$ such that $S$ has the form given in \eqref{mzf-form}, and thus satisfies Property A with respect to the basis $(e_1,e_2)$. Note, when $S$ has two distinct elements $e_1$ and $e_2$ both with multiplicity $n-1$, this ensures $S=e_1^{[n-1]}\bdot e_2^{[n-1]}\bdot (e_1+e_2)$ with $(e_1,e_2)$ and $(e_2,e_1)$ both bases for $G$ with respect to which $S$ satisfies Property A.

Any sequence $S$ having the form given in \eqref{mzf-form} is easily seen to be a minimal zero-sum sequence. The converse, that \emph{every} minimal zero-sum sequence $S$ of maximal length $|S|=\mathsf D(G)=2n-1$ must have the form given in \eqref{mzf-form}, is the  structural characterization of extremal sequences for the Davenport constant that was previously alluded to, which required  several years and the combined effort of all the results from \cite{propBGaoGer-multof2} \cite{PropB} \cite{PropBfix} \cite{Reiher-propB} (as well as the individual verification of the case $n=9$ \cite{PropB-smallcase}).

The precise structure of all extremal sequences for $k=n-1$ was achieved in \cite{schmid-propC-exact} \cite{PropB->PropC}, and relies on the characterization in the case $k\leq 1$. We continue with the commonly used terminology in this case.

A sequence $S$ of terms from $G=C_n\oplus C_{n}$ is said to have \textbf{Property C} if every term of $S$ has multiplicity $n-1$.
We say that the \emph{group} $G=C_n\oplus C_{n}$ has \textbf{Property C} if every sequence $S$ with $|S|=\eta(G)-1=3n-3$ and $0\notin \Sigma_{\leq n}(S)$  must satisfy Property C. It was shown in \cite{PropB->PropC} that, assuming  Property B (equivalently, property A) holds for $G$, then every sequence $S$ with $|S|=\eta(G)-1=3n-3$ and $0\notin \Sigma_{\leq n}(S)$  must satisfy Property C, i.e., that Property A/B holding for $G=C_n\oplus C_n$ implies that Property C holds for $G$. Rather surprisingly, in contrast to the case for Property A/B, this does not easily yield a precise structural description of all possibilities for extremal sequences $S$ when $k=n-1$. For $n=p$ prime, a derivation of the precise characterization from Property C can be found in \cite{emde-propC-prime}, and the derivation of the precise characterization from Property C in the general case (when $n$ may be composite) follows from a  result of  Schmid \cite{schmid-propC-exact}.
All such sequences satisfy Property A, and thus have the form \be\label{propC-form}S=e_1^{[n-1]}\bdot e_2^{[n-1]}\bdot (xe_1+e_2)^{[n-1]}\ee for some basis $(e_1,e_2)$ for $G=C_n\oplus C_n$ and some $x\in [1,n-1]$ with $\gcd(x,n)=1$.

In view of the discussion above, the inverse problem for $\mathsf s_{\leq 2n-1-k}(C_n\oplus C_{n})$ is complete for the boundary values $k\leq 1$ and $k=n-1$. For the interior values $k\in [2,n-2]$ (and thus, for $n\geq 4$), a precise characterization of all extremal sequences $S$ with length $|S|=\mathsf s_{\leq 2n-1-k}(C_n\oplus C_{n})-1$ but $0\notin \Sigma_{\leq 2n-1-k}(S)$ is still open. There is partial progress in the case  when $n=p$ is prime achieved in \cite{S_<k-invariant2/3p}, where the precise structure is characterized for $G=C_p\oplus C_p$ when $k\in [2,\frac{2p+1}{3}]$ with $p\geq 5$, showing all such extremal sequences must have the form $$S=e_1^{[p-1]}\bdot e_2^{[p-1]}\bdot (e_1+e_2)^{[k]}$$ for some basis $(e_1,e_2)$ for $G=C_p\oplus C_p$.
It was conjectured in \cite{S_<k-invariant2/3p} \cite{kevin-S_<k-invariant} that the same structure should hold for any $k\in [2,p-2]$. Naturally extending this conjecture to composite values, we obtain the following conjecture that, if true,  would fully characterize the structure of all extremal sequences for the zero-sum invariant $\mathsf s_{\leq 2n-1-k}(C_n\oplus C_n)$.

\begin{conjecture}
\label{conj-shortzs} Let $n\geq 2$,  let $G=C_n\oplus C_{n}$,  let $k\in [0,n-1]$, and let $S$ be a sequence of terms from $G$ with  $$|S|=2n-2+k\quad\und\quad 0\notin \Sigma_{\leq 2n-1-k}(S).$$ Then there exists  a basis  $(e_1,e_2)$ for $G$  such that the following hold.
\begin{itemize}
\item[1.]  If $k=0$, then $S\bdot g$ satisfies the description given in Item 2,  where $g=-\sigma(S)$.
\item[2.]  If $k=1$, then  $$S=e_1^{[n-1]}\bdot {\prod}_{i\in [1,mn]}^\bullet (x_ie_1+e_2),$$ for some $x_1,\ldots,x_{mn}\in [0,n-1]$ with $x_1+\ldots+x_{mn}\equiv 1\mod n$.

\item[3.] If $k\in [2,n-2]$, then $$S=e_1^{[n-1]}\bdot e_2^{[n-1]}\bdot(e_1+e_2)^{[k]}.$$
\item[4.] If $k=n-1$, then  $$S=e_1^{[n-1]}\bdot e_2^{[n-1]}\bdot(xe_1+e_2)^{[n-1]},$$ for some  $x\in [1,n-1]$ with $\gcd(x,n)=1$.
\end{itemize}
\end{conjecture}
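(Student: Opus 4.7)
The plan is to decompose Conjecture~\ref{conj-shortzs} by the value of $k$ and concentrate the work on its one genuinely hard item. Items~1 and~2 are equivalent to each other (passing from $S$ to $S \bdot (-\sigma(S))$ recovers the $k=1$ case from $k=0$, and deleting any term reverses the process), and both fall inside the Property~A/B regime, unconditionally established for every $n \geq 1$ by combining \cite{propBGaoGer-multof2, PropB, PropBfix, Reiher-propB, PropB-smallcase}. Item~4 is the Property~C setting, resolved in \cite{schmid-propC-exact, PropB->PropC} modulo Property~A/B, which is now available. So the entire conjecture reduces to Item~3, the interior range $k \in [2, n-2]$.

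For Item~3 I would imitate the two-step template that resolved Properties~B and~C, namely \emph{(a) a multiplicative reduction} together with \emph{(b) a direct resolution of the prime case}. For~(a), given any nontrivial factorization $n = mn'$ and the induced decomposition $k = k_m n' + k_{n'}$ with $k_m \in [0, m-1]$ and $k_{n'} \in [0, n'-1]$, use the natural short exact sequence
\[
0 \to C_{n'} \oplus C_{n'} \to C_n \oplus C_n \xrightarrow{\pi} C_m \oplus C_m \to 0,
\]
where the kernel is realized as the $m$-multiples and $\pi$ is reduction modulo $m$. Project a candidate extremal $S$ to $\pi(S) \in \Fc(C_m \oplus C_m)$ and iteratively strip short zero-sum subsequences $T_i \mid \pi(S)$ of length at most $2m - 1 - k_m$ until the remainder admits none; the hypothesis at $(m, k_m)$ then pins the structure of the remainder, while the lifted sums $\sigma(T_i)$ live in the kernel and assemble into a sequence in $C_{n'} \oplus C_{n'}$ to which the hypothesis at $(n', k_{n'})$ applies. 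Matching the two structural descriptions across the quotient and the kernel — and verifying that no short zero-sum in $S$ of length $\leq 2n - 1 - k$ can have escaped the accounting — recovers Item~3 for $(n, k)$.

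Step~(b), the prime case $n = p$, is where the real difficulty sits. For $k \in [2, \frac{2p+1}{3}]$ the conjectured description is already established in \cite{S_<k-invariant2/3p}, so the remaining task is to push the argument to all $k \in [2, p-2]$. I would stratify by the height $\h(S)$: if $\h(S) \leq p - 2$, a polynomial-method / Chevalley--Warning count of $p$-th-power identities over $\F_p^2$ should yield more short zero-sum subsequences than $0 \notin \Sigma_{\leq 2p-1-k}(S)$ permits; if $\h(S) = p - 1$, fix the repeated term as $e_1$, quotient by $\la e_1 \ra \cong C_p$, and use Vosper/Kneser-type sumset inequalities on the image sequence to force a second term $e_2$ of multiplicity $p - 1$, after which a direct zero-sum exclusion argument pins the remaining $k$ terms to a common value $e_1 + e_2$.

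The principal obstacle is Step~(b) in the range $k \in (\frac{2p+1}{3}, p-2]$. In this transition zone, multiplicity-averaging is no longer strong enough to produce the short zero-sums needed, yet Property~C-type rigidity has not yet kicked in, and a genuinely new additive-combinatorial input appears necessary to bridge the gap. The present paper sensibly confines itself to Step~(a); once combined with any future resolution of the prime case, the multiplicative reduction automatically delivers the full conjecture.
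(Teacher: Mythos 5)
The statement you are proving is Conjecture~\ref{conj-shortzs}, which the paper itself does not prove: it establishes only the multiplicative reduction (Theorem~\ref{thm-mult}) plus the special cases in Corollaries~\ref{cor-mult-2,3}--\ref{cor-bigspec}. Your proposal likewise is not a proof --- you state explicitly that step~(b) for $k\in(\frac{2p+1}{3},p-2]$ in the prime case requires ``a genuinely new additive-combinatorial input.'' That is an honest assessment, but it means the proposal has an unfilled gap at exactly the point where the conjecture remains open; nothing in your sketch (Chevalley--Warning counts for $\h(S)\leq p-2$, Vosper/Kneser for $\h(S)=p-1$) is developed far enough to close it, and no such argument is known.

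Within step~(a), your reduction has the quotient and kernel the wrong way around relative to the arrangement that actually makes the counting work. The paper uses $\varphi(x)=mx$, so the stripped blocks $W_i$ project to zero-sums of length exactly $n=\exp(\varphi(G))$ in $\varphi(G)\cong C_n\oplus C_n$, the distinguished remainder $W$ has the extremal length $2n-2+k_n$ and is handled by the hypothesis at $(n,k_n)$, and the $2m-2+k_m$ block-sums lie in $\ker\varphi\cong C_m\oplus C_m$ forming a sequence of the extremal length $2m-2+k_m$; the identity $(2m-2+k_m)n+(2n-2+k_n)=2mn-2+k$ is what makes both hypotheses applicable simultaneously. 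In your transposed version the remainder of $\pi(S)$ in $C_m\oplus C_m$ has length $2m-2+k_m$, but then the blocks must total $(2m+k_m)(n'-1)+k_{n'}$ in length while numbering $2n'-2+k_{n'}$ to feed the hypothesis at $(n',k_{n'})$; their average length exceeds $\exp(C_m\oplus C_m)=m$, so you cannot produce them by iterated application of $\eta$ or $\mathsf s_{\leq\ell}$ bounds, and with block lengths bounded only above by $2m-1-k_m$ you cannot convert a zero-sum among the block-sums into a forbidden zero-sum of length $\leq 2n-1-k$ in $S$. Finally, ``matching the two structural descriptions'' is not a verification step but the bulk of the paper's work: it requires the stability Lemma~\ref{lem-perturb-solo}, the transfer Lemmas~\ref{lem-k-to-k+1} and~\ref{lem-k+1-to-k}, Theorem~\ref{thm-fixedprop} to control the projected sequence, and the extended ``good term'' analysis occupying Claims~D--I of Proposition~\ref{prop-main-k_n-01} and Claims~C--D of Proposition~\ref{prop-kn-2n-1}. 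As written, your step~(a) would need to be rebuilt along the paper's lines, and step~(b) remains open.
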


Per the discussion above, Parts 1, 2 and 4 in Conjecture \ref{conj-shortzs} are known, and Part 3 holds  when $n=p$ is prime and $k\leq \frac{2p+1}{3}$. It was also shown in \cite{S_<k-invariant2/3p} that Conjecture \ref{conj-shortzs}.3 holds when  $n=p^s\geq 5$ is a prime power with $k\leq \frac{2n+1}{3}$ and  $p\nmid k$.   In general, we say that Conjecture \ref{conj-shortzs} holds for $k$ in $C_n\oplus C_{n}$ if Conjecture \ref{conj-shortzs} is true when $G=C_n\oplus C_{n}$ for the given value $k\in [0,n-1]$.
The main goal of this paper is Theorems \ref{thm-mult},  which shows that the structural description given in Conjecture  \ref{conj-shortzs}.3 is multiplicative, thus reducing the full characterization problem  for $\mathsf s_{\leq 2n-1-k}(C_n\oplus C_{n})$ to the case when   $n=p$ prime, so the case when $G=C_p\oplus C_p$ with $p\geq 11$ prime (in view of Corollary \ref{cor-mult-2,3}). This reduction to the prime case is the main aim of the paper and emulates the strategy successfully used to characterize the extremal sequences for the Davenport Constant (the case $k\leq 1$), where the characterization problem was first reduced by a similar multiplicative result to the prime case \cite{propBGaoGer-multof2} \cite{PropB} \cite{PropBfix}, with the prime case later resolved by independent methods \cite{Reiher-propB}. We remark that Schmid later reduced the characterization of extremal sequences for the Davenport Constant, in  a general rank two abelian group, to the case $C_n\oplus C_n$ \cite{Schmid-propB}, and a forthcoming work \cite{chaoII} aims to similarly extend our methods to general rank two abelian groups.

\begin{theorem}\label{thm-mult}
Let $n,m\geq 2$ and let $k\in [0,mn-1]$ with   $k=k_mn+k_n$, where  $k_m\in [0,m-1]$ and $k_n\in [0,n-1]$. Suppose Conjecture \ref{conj-shortzs} holds for $k_n$ in $C_{n}\oplus C_{n}$ and either Conjecture \ref{conj-shortzs} also holds for $k_m$ in $C_{m}\oplus C_{m}$ or else $k_n\geq 1$, $k_m\in [1,m-2]$ and Conjecture \ref{conj-shortzs} also  holds for $k_m+1$ in $C_m\oplus C_m$. Then Conjecture \ref{conj-shortzs} holds for $k$ in $C_{mn}\oplus C_{mn}$.
\end{theorem}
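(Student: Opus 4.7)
The plan is to reduce the problem in $G = C_{mn}\oplus C_{mn}$ to its two constituent subproblems in $C_n\oplus C_n$ and $C_m\oplus C_m$ via the short exact sequence
\[
0 \to nG \to G \xrightarrow{\varphi} G/nG \to 0,
\]
where $nG \cong C_m\oplus C_m$ and $G/nG \cong C_n\oplus C_n$. I would first apply $\varphi$ to the hypothesized sequence $S$, obtaining $\overline{S} := \varphi(S) \in \Fc(C_n\oplus C_n)$ of length $2mn - 2 + k$, and then iteratively extract disjoint subsequences $T_1,\ldots,T_j$ of $S$ whose images $\varphi(T_i)$ are zero-sums in $C_n \oplus C_n$ of length at most $2n - 1 - k_n$. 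The Wang--Zhao equality $\mathsf s_{\le 2n - 1 - k_n}(C_n\oplus C_n) = 2n - 1 + k_n$ permits such extraction to continue until the unused portion $\overline R$ of $\overline S$ has length at most $2n - 2 + k_n$. Each resulting sum $\sigma(T_i)$ lies in $nG$ and must be nonzero, for otherwise $S$ itself would contain a short zero-sum of length at most $2n - 1 - k_n$, contradicting the hypothesis.

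Collecting these sums gives an induced sequence $S^\star := \sigma(T_1) \bdot \cdots \bdot \sigma(T_j) \in \Fc(nG)$ in the smaller group $nG \cong C_m\oplus C_m$. The heart of the argument is to choose the $T_i$ carefully---exploiting the conjectured structure of $\overline R$ provided by Conjecture \ref{conj-shortzs} for $k_n$ in $C_n \oplus C_n$---so that $j \ge 2m - 2 + k_m$ while each $|T_i|$ is controlled tightly enough that any zero-sum in $S^\star$ of length at most $2m - 1 - k_m$ would combine the corresponding $T_i$ into a zero-sum in $S$ of length at most $2mn - 1 - k$, which is forbidden. Thus $S^\star$ itself satisfies the hypotheses of Conjecture \ref{conj-shortzs} in $C_m\oplus C_m$ at parameter $k_m$; the alternative case using $k_m + 1$ covers the boundary scenario in which the extraction yields one extra short zero-sum, effectively incrementing $k_m$, for which one requires $k_n \ge 1$ so that strictly shorter $T_i$ can be taken and $k_m \in [1, m-2]$ so that $k_m + 1$ remains in the generic range. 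Applying Conjecture \ref{conj-shortzs} in both smaller groups then yields a basis $(f_1, f_2)$ of $nG$ with respect to which $S^\star$ has the prescribed form, and a basis $(\overline{e_1}, \overline{e_2})$ of $G/nG$ with respect to which $\overline S$ has the prescribed form.

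To assemble the final structure, I would lift $\overline{e_1}, \overline{e_2}$ to a basis $(e_1, e_2)$ of $G$ chosen so that $ne_1 = f_1$ and $ne_2 = f_2$; such aligned lifts exist after adjusting representatives within the structurally-equivalent cosets. The description of $\overline S$ confines every term of $S$ to one of a few prescribed cosets of $nG$, while the description of $S^\star$ dictates the distribution of the coset-sums $\sigma(T_i)$ inside $nG$; reconciling these two descriptions forces $S$ into the form asserted in the appropriate part of Conjecture \ref{conj-shortzs}, namely $e_1^{[mn-1]} \bdot e_2^{[mn-1]} \bdot (e_1 + e_2)^{[k]}$ in the generic range $k \in [2, mn-2]$, with the degenerate cases $k \in \{0,1,mn-1\}$ handled analogously. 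The principal obstacle is the length control in the second step: naively combining $2m - 1 - k_m$ subsequences of length up to $2n - 1 - k_n$ yields $(2m - 1 - k_m)(2n - 1 - k_n)$, which far exceeds $2mn - 1 - k$. Overcoming this requires the extracted $\varphi(T_i)$ to be minimal zero-sums drawn from the highly restricted support of $\overline R$, whose minimal zero-sums under Part 3 of Conjecture \ref{conj-shortzs} have lengths of the form $n + a$ for small $a \ge 1$; a second delicate point is aligning the independently-obtained bases of $G/nG$ and $nG$ into a single compatible basis of $G$.
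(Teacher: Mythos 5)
Your overall skeleton (project onto a quotient isomorphic to $C_n\oplus C_n$, extract blocks whose images are zero-sum, form the induced sequence of block-sums in the kernel $nG\cong C_m\oplus C_m$, and apply Conjecture \ref{conj-shortzs} in both factors) matches the paper's strategy, but two of your steps contain genuine gaps. First, the extraction. You propose to pull out subsequences $T_i$ with $\varphi(T_i)$ zero-sum of length at most $2n-1-k_n$, using $\mathsf s_{\leq 2n-1-k_n}(C_n\oplus C_n)=2n-1+k_n$. This cannot produce $2m-2+k_m$ blocks: with a remainder of length at most $2n-2+k_n$, greedy extraction only guarantees roughly $(2mn)/(2n)=m$ blocks, and you correctly observe that recombining long blocks overshoots the bound $2mn-1-k$. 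You flag this as "the principal obstacle" but do not resolve it; appealing to minimality of the $\varphi(T_i)$ does not help, since minimal zero-sums in $C_n\oplus C_n$ can have length up to $2n-1$. The correct device is to extract blocks whose images are zero-sums of length at most $n=\exp(C_n\oplus C_n)$, using $\mathsf s_{\leq n}(C_n\oplus C_n)=\eta(C_n\oplus C_n)=3n-2$; this yields $2m-2+k_m$ blocks plus a remainder $W$, and a counting argument then forces every block to have length exactly $n$ and $|W|=2n-2+k_n$ with $0\notin\Sigma_{\leq 2n-1-k_n}(\varphi(W))$, which is what makes both the application of Conjecture \ref{conj-shortzs} at parameter $k_n$ to $\varphi(W)$ and the length bookkeeping for lifted zero-sums work.

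Second, and more seriously, the final "reconciliation" is not a routine lifting-of-bases step: it is the heart of the proof and your proposal leaves it unaddressed. Knowing the structure of $\varphi(S)$ in the quotient and the structure of the induced kernel sequence for \emph{one} block decomposition does not determine $S$, because $S$ may contain several distinct terms lying in the same coset of $\ker\varphi$, and a single induced sequence cannot detect this. The paper resolves this by perturbing block decompositions (swapping terms with equal $\varphi$-image between two blocks), observing that both the old and new induced sequences must satisfy the rigid extremal structure, and invoking a stability lemma (Lemma \ref{lem-perturb-solo}) to conclude the swapped terms coincide, i.e., every term is "good"; a further perturbation argument is then needed to kill the residual $\ker\varphi$-ambiguity $\alpha$ in the third generator and reduce to Lemma \ref{lem-genPropB}. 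Moreover, when $k_n\in\{0,1\}$ the quotient structure is only the weak Property~A/B form (with $n$ essentially arbitrary terms on a line), and one needs the additional structural input of Theorem \ref{thm-fixedprop} before any of this can begin. Without these ingredients the argument does not close.
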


While the reduction to the prime case is our main motivating goal, nonetheless, combining the known instances of  Conjecture \ref{conj-shortzs} with Theorem \ref{thm-mult} yields many new cases where Conjecture \ref{conj-shortzs} is established here without condition. In particular, we have the following corollaries,  showing that Conjecture \ref{conj-shortzs} is true
when $n$ is only divisible by primes at most $7$, or
when $n$ is a prime power with $k\leq \frac{2n+1}{3}$, or when $n$ is composite and $k=n-d-1$ or $n-2d+1$ for a proper, nontrivial divisor $d\mid n$. The second corollary, in the case $m=1$, removes the restriction $p\nmid k$ in \cite[Theorem 5]{S_<k-invariant2/3p}.

\begin{corollary}\label{cor-mult-2,3}
 If $n=2^{s_1}3^{s_2}5^{s_3}7^{s_4}\geq 2$ with $s_1,s_2,s_3,s_4\geq 0$, then Conjecture \ref{conj-shortzs} holds  in $C_{n}\oplus C_{n}$ for all  $k\in [0,n-1]$.
\end{corollary}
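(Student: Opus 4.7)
The plan is induction on the number of prime factors of $n$ (counted with multiplicity), using Theorem \ref{thm-mult} as the engine of the inductive step and the previously known cases of Conjecture \ref{conj-shortzs} as the base. Write $n = 2^{s_1} 3^{s_2} 5^{s_3} 7^{s_4}$ and set $s = s_1 + s_2 + s_3 + s_4 \geq 1$; the base case $s = 1$ corresponds to $n = p \in \{2,3,5,7\}$ prime, and the inductive step to $s \geq 2$.

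For the base case, fix a prime $p \in \{2,3,5,7\}$ and any $k \in [0, p-1]$. Items 1, 2, and 4 of Conjecture \ref{conj-shortzs} (the cases $k = 0$, $k = 1$, and $k = p-1$, respectively) are known unconditionally, as reviewed in the introduction. The only remaining values lie in $k \in [2, p-2]$ and fall under Conjecture \ref{conj-shortzs}.3. For $p = 2$ and $p = 3$ this interval is empty, so nothing further is needed. For $p = 5$, one has $[2, p-2] = [2,3]$, while the prime case result from \cite{S_<k-invariant2/3p} establishes Conjecture \ref{conj-shortzs}.3 whenever $k \leq \frac{2p+1}{3} = \frac{11}{3}$, i.e. for $k \in \{2,3\}$. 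For $p = 7$, one has $[2, p-2] = [2,5]$, and the same result applies for $k \leq \frac{2p+1}{3} = 5$, again covering the entire required range. Thus Conjecture \ref{conj-shortzs} holds in $C_p \oplus C_p$ for all $k \in [0,p-1]$ whenever $p \in \{2,3,5,7\}$.

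For the inductive step, suppose $s \geq 2$ and that the statement is known for every $n'$ of the given form with fewer than $s$ prime factors counted with multiplicity. Choose any prime $p \in \{2,3,5,7\}$ dividing $n$ and write $n = mp$, where $m = n/p \geq 2$ has exactly $s - 1$ prime factors and is again of the allowed form, so the induction hypothesis applies to $m$. Given $k \in [0, n-1]$, use the division algorithm to write $k = k_m p + k_p$ with $k_m \in [0, m-1]$ and $k_p \in [0, p-1]$. By the induction hypothesis, Conjecture \ref{conj-shortzs} holds for $k_m$ in $C_m \oplus C_m$, and by the base case it holds for $k_p$ in $C_p \oplus C_p$. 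The first alternative in the hypothesis of Theorem \ref{thm-mult} (applied with the theorem's $n$ equal to $p$ and the theorem's $m$ equal to our $m$) is therefore satisfied, and Theorem \ref{thm-mult} yields that Conjecture \ref{conj-shortzs} holds for $k$ in $C_{mp} \oplus C_{mp} = C_n \oplus C_n$, as required.

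There is essentially no obstacle in the argument beyond the numerical verification of the base case; the only substantive check is the inequality $p - 2 \leq \frac{2p+1}{3}$ for $p \in \{5, 7\}$, which pins down precisely why $7$ is the largest prime permitted in the statement: the next prime $p = 11$ has $p - 2 = 9 > \frac{23}{3} = \frac{2p+1}{3}$, so Conjecture \ref{conj-shortzs}.3 is not yet known in full for $C_{11} \oplus C_{11}$ and the inductive machinery has nothing to bootstrap from. The inductive step itself requires no further work beyond invoking Theorem \ref{thm-mult} once per prime factor.
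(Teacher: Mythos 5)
Your proposal is correct and follows essentially the same route as the paper: reduce to the prime base cases $p\in\{2,3,5,7\}$ via Theorem \ref{thm-mult}, and verify those using the known cases $k\leq 1$, $k=p-1$, and $k\in[2,\tfrac{2p+1}{3}]$, with the inequality $p-2\leq\tfrac{2p+1}{3}$ for $p\leq 7$ closing the gap. Your write-up merely makes the induction on the number of prime factors explicit where the paper leaves it implicit.
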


\begin{corollary}\label{cor-mult}
For any prime power $n\geq 2$, Conjecture \ref{conj-shortzs} holds  in $C_{n}\oplus C_{n}$ for all  $k\leq \frac{2n+1}{3}$.
\end{corollary}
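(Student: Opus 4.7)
The plan is induction on $s$, where $n = p^s$. Parts~1, 2, and 4 of Conjecture~\ref{conj-shortzs} are already known unconditionally for $C_n \oplus C_n$, so the boundary values $k \in \{0,1\}$ and $k = n - 1$ are automatic. Since $k = n - 1 \leq \frac{2n+1}{3}$ forces $n \leq 4$, Part~4 only enters in the small cases $n \in \{2,3,4\}$ where it is covered. What remains is Part~3 for $k \in [2, n-2]$ with $k \leq \frac{2n+1}{3}$.

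For the base case $s = 1$, so $n = p$ is prime, any $k \in [2, p-2]$ satisfies $0 < k < p$ and hence $p \nmid k$, so the prime-power result of \cite{S_<k-invariant2/3p} quoted in the introduction applies directly to yield Part~3. The smallest primes $p \in \{2,3\}$ produce no $k$ in the range $[2, p-2]$, so nothing is lost.

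For the inductive step $s \geq 2$, assume the corollary in $C_{p^{s-1}} \oplus C_{p^{s-1}}$, and fix $k \in [2, n-2]$ with $k \leq \frac{2n+1}{3}$. If $p \nmid k$ and $n \geq 5$, the quoted prime-power result from \cite{S_<k-invariant2/3p} finishes the case. If instead $p \mid k$, write $k = p k'$ and apply Theorem~\ref{thm-mult} to the factorization $n = p^{s-1} \cdot p$ (with the theorem's $m = p^{s-1}$ and inner $n = p$), so that $k_m = k'$ and $k_n = 0$. The hypothesis for $k_n = 0$ in $C_p \oplus C_p$ is Part~1 of the conjecture. The hypothesis for $k_m = k'$ in $C_{p^{s-1}} \oplus C_{p^{s-1}}$ follows from the inductive hypothesis as soon as $k' \leq \frac{2 p^{s-1} + 1}{3}$; this is a consequence of $k' = k/p \leq \frac{2 p^s + 1}{3p}$ together with the elementary inequality $\frac{2 p^s + 1}{3p} < \frac{2 p^{s-1} + 1}{3}$, which holds iff $p > 1$. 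Theorem~\ref{thm-mult} then delivers Part~3 for $k$ in $C_n \oplus C_n$. The only prime power with $s \geq 2$ and $n < 5$ is $n = 4$; the sole relevant value $k = 2$ there is divisible by $p = 2$, so it is handled by the multiplicative branch and the restriction $n \geq 5$ in the quoted result causes no gap.

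There is no serious technical obstacle: the argument is a tidy induction that chains three inputs—the known Parts~1, 2, 4; the prime-power theorem of \cite{S_<k-invariant2/3p} for residues with $p \nmid k$; and Theorem~\ref{thm-mult}, invoked once per factor of $p$ in $k$ to peel off a power of $p$ and descend to $p^{s-1}$. The only point requiring care is the numerical check that the bound $k \leq \frac{2 p^s + 1}{3}$ contracts, after dividing by $p$, to $k' \leq \frac{2 p^{s-1} + 1}{3}$, and that is precisely where the slack coming from $p \geq 2$ in the denominator is used.
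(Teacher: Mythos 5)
Your proposal is correct and follows essentially the same route as the paper: induction on $s$ with $n=p^s$, invoking the quoted prime-power result of \cite{S_<k-invariant2/3p} when $p\nmid k$, and applying Theorem \ref{thm-mult} with $m=p^{s-1}$, inner modulus $p$, $k_m=k/p$ and $k_n=0$ when $p\mid k$, together with the same contraction $\frac{2p^s+1}{3p}<\frac{2p^{s-1}+1}{3}$. The only cosmetic difference is that the paper dispatches the small prime powers by citing Corollary \ref{cor-mult-2,3} (assuming $n\geq 11$), whereas you handle $n=4$ and the vacuous prime base cases directly inside the induction; both are fine.
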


\begin{corollary}\label{cor-bigspec}
For $n\geq 4$ composite with $d\mid n$ a proper, nontrivial divisor,  Conjecture \ref{conj-shortzs} holds for $k=n-d-1$ and  for $k=n-2d+1$  in $C_{n}\oplus C_{n}$.
\end{corollary}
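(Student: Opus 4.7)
The proof is a direct application of Theorem \ref{thm-mult} to the stated values of $k$, so the task reduces to verifying the hypotheses of that theorem for appropriate decompositions. Write $n = md$ with $m = n/d$; since $d$ is a proper, nontrivial divisor of $n$, we have $m, d \geq 2$. To apply Theorem \ref{thm-mult}, I will treat $d$ as its inner modulus and $m$ as its outer modulus, so the decomposition takes the form $k = k_m d + k_d$ with $k_m \in [0, m-1]$ and $k_d \in [0, d-1]$.

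For $k = n - d - 1$, I compute
\[
k = md - d - 1 = (m-2)d + (d-1),
\]
so $k_m = m - 2$ and $k_d = d - 1$. The hypothesis ``Conjecture \ref{conj-shortzs} holds for $k_d = d - 1$ in $C_d \oplus C_d$'' is Part 4 of the conjecture (the Property C case), which is known. For $k = n - 2d + 1$, I compute
\[
k = md - 2d + 1 = (m-2)d + 1,
\]
so $k_m = m - 2$ and $k_d = 1$. The hypothesis for $k_d = 1$ in $C_d \oplus C_d$ is Part 2 of the conjecture (the Property A/B / Davenport case), which is also known.

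In both cases the remaining condition concerns $k_m = m - 2$ in $C_m \oplus C_m$, and this is handled by splitting on $m$. If $m = 2$, then $k_m = 0$ and the interval $[1,m-2]$ is empty, so only the \emph{first} alternative of Theorem \ref{thm-mult} is available; but the required input, Conjecture \ref{conj-shortzs} for $k = 0$ in $C_2 \oplus C_2$, is the trivial Davenport case and is known. If $m \geq 3$, then $k_m = m - 2 \in [1, m-2]$, and in both Case A ($k_d = d-1 \geq 1$) and Case B ($k_d = 1$) we have $k_d \geq 1$; thus the \emph{second} alternative of Theorem \ref{thm-mult} applies and requires Conjecture \ref{conj-shortzs} for $k_m + 1 = m - 1$ in $C_m \oplus C_m$, which is once again the known Property C case. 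Theorem \ref{thm-mult} therefore yields Conjecture \ref{conj-shortzs} for $k$ in $C_n \oplus C_n$, completing the proof.

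There is no genuine obstacle in this argument: the corollary is essentially a bookkeeping exercise, exploiting the fact that the particular decompositions $(k_m, k_d) = (m-2, d-1)$ and $(k_m, k_d) = (m-2, 1)$ land exactly on the two known endpoints of Conjecture \ref{conj-shortzs}, with the hypothesis on $k_m$ absorbed either by the trivial case $m = 2$ or, for $m \geq 3$, by the second alternative that bumps $k_m$ up to $m - 1$ (Property C). The only subtlety worth flagging is the $m = 2$ case, where the alternative hypothesis is unavailable and one must fall back on the direct hypothesis.
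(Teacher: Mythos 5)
Your proof is correct and follows essentially the same route as the paper: write $n=md$, decompose $k=(m-2)d+(d-1)$ or $(m-2)d+1$, invoke the known cases $k_d=d-1$ (Part 4) and $k_d=1$ (Part 2), and handle $k_m=m-2$ either directly when $m=2$ or via the second alternative of Theorem \ref{thm-mult} with $k_m+1=m-1$ when $m\geq 3$. No issues.
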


\section{Preparatory Lemmas}

The goal of this section is to collect together several properties about sequences having the structure given in  Conjecture \ref{conj-shortzs}. However, we will also  need the following two results. The first was a conjecture of Hamidoune established in \cite[Theorem 1]{hamconj}.

\begin{theirtheorem}\label{thm-hamconj}
Let $G$ be a finite abelian group,  let $k\geq 1$ and let $S\in \Fc(G)$ be a sequence  with $|S|\geq |G|+1$ and $k\leq |\supp(S)|$. If $\mathsf h(S)\leq |G|-k+2$ and $0\notin \Sigma_{|G|}(S)$, then $|\Sigma_{|G|}(S)|\geq |S|-|G|+k-1$.
\end{theirtheorem}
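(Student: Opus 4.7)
The plan is to exploit the complementation identity $\Sigma_{|G|}(S) = \sigma(S) - \Sigma_m(S)$ with $m := |S| - |G| \geq 1$, which transforms the problem into the following equivalent one: if $|\supp(S)| \geq k$, $\h(S) \leq |G| - k + 2$, and $\sigma(S) \notin \Sigma_m(S)$, then $|\Sigma_m(S)| \geq m + k - 1$. This recasts the hypothesis about absent zero-sums of length $|G|$ into a more workable statement about a set of subsums of length $m$.

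Proceeding by induction, I would take $m = 1$ as the base case, where $\Sigma_1(S) = \supp(S)$ and so $|\Sigma_1(S)| \geq k = m + k - 1$ follows immediately. For $m \geq 2$, the inductive step would select a term $g \in \supp(S)$ and use the identity
\begin{equation*}
\Sigma_m(S) = \Sigma_m(S \bdot g^{[-1]}) \cup \bigl(g + \Sigma_{m-1}(S \bdot g^{[-1]})\bigr),
\end{equation*}
which, together with the inductive hypothesis applied to $S \bdot g^{[-1]}$ at the reduced parameter $m - 1$ and at an appropriately adjusted $k$, gives the required lower bound so long as the two sets on the right-hand side do not overlap too much inside $\Sigma_m(S)$. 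The choice of $g$ must simultaneously preserve as much support as possible, preserve the height condition, and ensure the complementation hypothesis $\sigma(S \bdot g^{[-1]}) \notin \Sigma_{m-1}(S \bdot g^{[-1]})$ continues to hold.

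When the sets $\Sigma_m(S \bdot g^{[-1]})$ and $g + \Sigma_{m-1}(S \bdot g^{[-1]})$ overlap substantially, Kneser's theorem or its refinements tailored to subsums (such as the DeVos--Goddyn--Mohar theorem or partition-type theorems in the spirit of Grynkiewicz) force $\Sigma_m(S)$ to be a union of cosets of a nontrivial subgroup $H \leq G$. The strategy in this structural case is to project $S$ into the quotient $G/H$ and re-run the argument there; the height bound $\h(S) \leq |G| - k + 2$ and the support bound $|\supp(S)| \geq k$ are calibrated precisely so that the projection retains enough distinct elements and excludes enough zero-sums for the inductive hypothesis to apply in $G/H$, while the fiber structure lifts the quotient bound back up to $G$.

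The principal obstacle is the delicate bookkeeping demanded by the Kneser-style reduction: after projecting to $G/H$, one must track how $k$, $|\supp(S)|$, and $\h(S)$ transform in the quotient, since fibers of different sizes can collapse support but inflate height relative to $|G/H|$. A further technical point is the degenerate regime $\Sigma_m(S) = G \setminus \{\sigma(S)\}$, where the conclusion is easily checked directly but the Kneser reduction collapses; ruling this out uniformly requires using $\sigma(S) \notin \Sigma_m(S)$ in conjunction with the height hypothesis. Balancing these so that the target bound $m + k - 1$ genuinely improves at each inductive step is where the bulk of the proof effort would lie.
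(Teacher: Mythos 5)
First, a point of comparison: the paper does not prove Theorem~\ref{thm-hamconj} at all --- it is imported verbatim from \cite{hamconj}, where it appears as Theorem~1 --- so there is no in-paper argument to measure yours against, and your submission must stand on its own. Judged that way, it is a strategy outline rather than a proof. The complementation step is correct and is indeed the natural opening move: with $m=|S|-|G|$ one has $\Sigma_{|G|}(S)=\sigma(S)-\Sigma_m(S)$, so $0\notin \Sigma_{|G|}(S)$ is equivalent to $\sigma(S)\notin \Sigma_m(S)$ and the target becomes $|\Sigma_m(S)|\geq m+k-1$; the base case $m=1$ is also fine. Everything after that is deferred, and the deferrals are exactly where the theorem lives.

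Concretely: you never exhibit a term $g$ meeting your three requirements simultaneously. Height and the non-containment hypothesis are automatically inherited by $S\bdot g^{[-1]}$, but support is not: if the only available $g$ has multiplicity $1$, then $|\supp(S\bdot g^{[-1]})|$ may drop to $k-1$, the inductive hypothesis then yields only $(m-1)+(k-1)-1=m+k-3$, and you supply no mechanism for recovering the resulting deficit of $2$ from the union $\Sigma_m(S\bdot g^{[-1]})\cup\bigl(g+\Sigma_{m-1}(S\bdot g^{[-1]})\bigr)$. More seriously, the case where these two sets largely coincide is the heart of the matter; naming Kneser's theorem and the DeVos--Goddyn--Mohar theorem identifies the right circle of tools but does not apply them. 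One must actually produce a nontrivial subgroup $H$ with $\Sigma_m(S)$ being $H$-periodic, verify that the support, height, and non-containment hypotheses descend to $G/H$ with quantified loss (the bound $\h(S)\leq |G|-k+2$ does not transform in any simple way, since it must be compared against $|G/H|-k'+2$ for a new $k'$), and then lift the quotient estimate back to $G$ --- none of which is carried out, as you yourself concede when you write that ``the bulk of the proof effort'' lies there. The plan is plausibly the right shape (the published proof in \cite{hamconj} likewise rests on a partition-type theorem together with Kneser's theorem), but as written it cannot be accepted as a proof of the statement.
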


The second is  \cite[Lemma 3.2]{PropBfix}, which is  the corrected version of \cite[Proposition 4.2]{PropB}.

\begin{theirtheorem}\label{thm-fixedprop}
Let $n\geq 2$, let $s\geq 3$ and   let $G=C_n\oplus C_n$. If   $S\in\Fc(G)$  is a zero-sum sequence with $|S|=sn-1$ and $0\notin \Sigma_{\leq n-1}(S)$, then  there is a basis $(e_1,e_2)$ for $G$ such that either
\begin{itemize}
\item[1.] $\supp(S)\subseteq \{e_1\}\cup \big(\la e_1\ra+e_2\big)$ and $\vp_{e_1}(S)\equiv -1\mod n$, or
  \item[2.] $S=e_1^{[an]}\bdot e_2^{[bn-1]}\bdot (xe_1+e_2)^{[cn-1]} \bdot (xe_1+2e_2)$ for some $x\in [2,n-2]$ with $\gcd(x,n)=1$, and some  $a,b,c\geq 1$ with $a+b+c=s$.
\end{itemize}
\end{theirtheorem}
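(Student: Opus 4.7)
The plan is to proceed by induction on $s \geq 3$, bootstrapping from the already-established Property B, namely the characterization \eqref{mzf-form} of minimal zero-sum sequences of length $\mathsf D(G) = 2n-1$ in $G = C_n \oplus C_n$. The opening move, in either the base case or the inductive step, is to extract a zero-sum subsequence $T_1 \mid S$ of length exactly $n$: such a $T_1$ exists because $|S| = sn - 1 \geq 3n - 1 > \eta(G) - 1 = 3n - 3$ guarantees a zero-sum of length at most $n$, and the hypothesis $0 \notin \Sigma_{\leq n - 1}(S)$ forces that length to be exactly $n$. Setting $S' := S \bdot T_1^{[-1]}$ yields a zero-sum of length $(s - 1)n - 1$ that inherits $0 \notin \Sigma_{\leq n - 1}(S')$ as a subsequence of $S$.

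For the base case $s = 3$, $|S'| = 2n - 1 = \mathsf D(G)$. A complementation argument shows $S'$ is in fact a minimal zero-sum: any proper nontrivial zero-sum subsequence $T_2 \subsetneq S'$ must have $n \leq |T_2| \leq 2n - 2$, and then the complement $S' \bdot T_2^{[-1]}$ is itself a nontrivial zero-sum of length $\leq n - 1$, contradicting the hypothesis. Property B then supplies a basis $(\tilde e_1, \tilde e_2)$ placing $S'$ in the shape $\tilde e_1^{[n-1]} \bdot \prod_{i=1}^n (x_i \tilde e_1 + \tilde e_2)$ with $\sum x_i \equiv 1 \pmod n$. It remains to classify the possible $T_1$: if $\supp(T_1) \subseteq \{\tilde e_1\} \cup (\la \tilde e_1\ra + \tilde e_2)$, then $\supp(S)$ lies in the same line and $\vp_{\tilde e_1}(S) \equiv -1 \pmod n$ (from $\sigma(S) = 0$), giving form 1; otherwise $T_1$ contains at least one term outside the line, and a direct calculation (described as the main obstacle below) forces the rigid shape $T_1 = \tilde e_1 \bdot \tilde e_2^{[y - 1]} \bdot (x \tilde e_1 + \tilde e_2)^{[n - 1 - y]} \bdot (x \tilde e_1 + 2 \tilde e_2)$ with $xy \equiv 1 \pmod n$ and $x \in [2, n - 2]$, yielding form 2 with $(a, b, c) = (1, 1, 1)$.

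For the inductive step $s \geq 4$, the reduced sequence $S'$ of length $(s - 1) n - 1$ also satisfies the theorem's hypothesis, so by induction $S'$ fits form 1 or form 2 with parameter $s - 1$. Attaching $T_1$ proceeds case-by-case: if $S'$ is in form 1 the same attachment analysis as in the base case applies; if $S'$ is already in form 2 relative to a basis $(e_1, e_2)$ (with its unique exceptional term $xe_1 + 2 e_2$ already pinned down), any length-$n$ zero-sum $T_1 \mid S$ must avoid introducing a second exceptional term (else producing a short zero-sum in $S$ that violates $0 \notin \Sigma_{\leq n-1}(S)$), which forces $T_1$ to be one of $e_1^{[n]}$, $e_2^{[n]}$, or $(xe_1 + e_2)^{[n]}$, each simply incrementing $a$, $b$, or $c$ while preserving form 2 with parameter $s$.

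The main obstacle is the combinatorial case analysis in the base case for length-$n$ zero-sums $T_1$ containing a term outside the line $\{\tilde e_1\} \cup (\la \tilde e_1\ra + \tilde e_2)$. To derive the rigid form 2 description, one must show that only one such exceptional term can appear in $T_1$ and that it has the precise shape $x \tilde e_1 + 2 \tilde e_2$ with $\gcd(x, n) = 1$ and $x \in [2, n - 2]$. This typically proceeds by enumerating the achievable subset sums of $T_1 \bdot S'$ using sumset methods (e.g., Cauchy--Davenport or Kneser's theorem applied to partial sum sets in $C_n$) and ruling out all alternative configurations via the constraint $0 \notin \Sigma_{\leq n - 1}(S)$. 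The exclusions $x \neq 1, n - 1$ emerge from verifying that those boundary values reduce back to form 1 under a change of basis (with $e_2$ or $xe_1 + e_2$ playing the role of the new $\tilde e_1$); these exclusions form the corrections in \cite{PropBfix} of an earlier oversight in \cite{PropB}.
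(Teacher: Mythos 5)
First, a point of order: the paper does not prove Theorem \ref{thm-fixedprop} at all. It is imported verbatim as \cite[Lemma 3.2]{PropBfix}, the corrected form of \cite[Proposition 4.2]{PropB}, so there is no in-paper proof to compare yours against. Judged on its own, your outline follows the natural (and essentially the standard) strategy: peel off length-$n$ zero-sum blocks, reduce to a minimal zero-sum of length $\mathsf D(G)=2n-1$, invoke Property B via \eqref{mzf-form}, and then classify how the removed block can attach. The preliminary steps you do spell out are correct: the existence of $T_1$ with $|T_1|=n$ follows from $\eta(G)=3n-2$ together with $0\notin\Sigma_{\leq n-1}(S)$, and the complementation argument showing $S'$ is a minimal zero-sum in the base case is sound.

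The genuine gap is that the entire mathematical content of the theorem is concentrated in the step you defer. In the base case you assert that a length-$n$ zero-sum $T_1$ containing a term off the line $\{\tilde e_1\}\cup(\la\tilde e_1\ra+\tilde e_2)$ is ``forced'' into the rigid shape $\tilde e_1\bdot\tilde e_2^{[y-1]}\bdot(x\tilde e_1+\tilde e_2)^{[n-1-y]}\bdot(x\tilde e_1+2\tilde e_2)$, but you give no argument: you do not show why at most one exceptional term can occur, why its $e_2$-coordinate must be exactly $2$, why $\gcd(x,n)=1$, or why the remaining $x_i$ of $S'$ must all equal $0$ or $x$ --- you only name the tools (Cauchy--Davenport, Kneser) that ``typically'' handle such questions. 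This is precisely the delicate classification where \cite{PropB} originally went wrong and which \cite{PropBfix} exists to repair, so it cannot be waved through. A second, smaller gap sits in the inductive step: when $S'$ falls under conclusion 1 with $s-1\geq 3$, it is \emph{not} a minimal zero-sum of length $\mathsf D(G)$, only a zero-sum of length $(s-1)n-1$ supported on a line with $\vp_{e_1}(S')\equiv -1\bmod n$, so Property B and ``the same attachment analysis as in the base case'' do not apply verbatim; the analysis of an off-line term in $T_1$ against this longer, looser structure has to be redone (or the line structure of $S'$ first refined). By contrast, your treatment of the inductive step when $S'$ is in form 2 is essentially complete: the computation ruling out mixed blocks $e_1^{[i]}\bdot e_2^{[j]}\bdot(xe_1+e_2)^{[k]}$ and blocks meeting the exceptional term does go through using $\gcd(x,n)=1$ and $0\notin\Sigma_{\leq n-1}(S)$. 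As it stands, the proposal is a correct reduction scheme with the central lemma unproved.
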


We begin now with a stability result for sequences satisfying Conjecture \ref{conj-shortzs} when $k\geq 1$.

\begin{lemma}\label{lem-perturb-solo}
Let $n\geq 2$, let $k\in [1,n-1]$,  let $G=C_n\oplus C_n$, and let $S\in \Fc(G)$ with  $|S|=2n-2+k$ and $0\notin \Sigma_{\leq 2n-1-k}(S)$ such that  Conjecture \ref{conj-shortzs} holds for $S$. If $x\in \supp(S)$, $y\in G$, and $S'=S\bdot x^{[-1]}\bdot y$ also has $0\notin \Sigma_{\leq 2n-1-k}(S')$ with  Conjecture \ref{conj-shortzs} holding for $S'$, then $x=y$.
\end{lemma}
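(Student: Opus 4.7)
The plan is to dispatch the case $k=1$ via a sum invariant and to handle $k\in[2,n-1]$ by structural rigidity.

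First, for $k=1$, Conjecture~\ref{conj-shortzs}.2 describes $S$ as $e_1^{[n-1]}\bdot\prod_i(x_ie_1+e_2)$ with $\sum x_i\equiv 1\pmod n$, from which a direct computation yields $\sigma(S)=(n-1+\sum x_i)e_1+ne_2=ne_1=0$. The same description applies to $S'$, so $\sigma(S)=\sigma(S')=0$. Since $\sigma(S')=\sigma(S)-x+y$, this immediately forces $y=x$, finishing the case.

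For $k\in[2,n-1]$, both $S$ and $S'$ satisfy Item~3 or Item~4 of Conjecture~\ref{conj-shortzs}, so each has support of size exactly $3$ with multiplicity pattern $(n-1,n-1,k)$ or $(n-1,n-1,n-1)$, and every support element has order $n$ (being of the form $ae_1+e_2$ or $e_1$ relative to a basis). Since every multiplicity in $S$ is at least $k\geq 2$, the element $x$ still lies in $\supp(S\bdot x^{[-1]})$. Assuming $y\neq x$ for contradiction, I would split into three subcases. If $y\notin\supp(S)$, then $|\supp(S')|=4$ contradicts the three-element support mandated for $S'$. If $y\in\supp(S)$ with $\vp_y(S)=n-1$, then $\vp_y(S')=n$, so the subsequence $y^{[n]}$ is a zero-sum of length $n\leq 2n-1-k$, contradicting $0\notin\Sigma_{\leq 2n-1-k}(S')$. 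The only remaining subcase has $y\in\supp(S)$ with $\vp_y(S)<n-1$, which forces Item~3 (so $k\leq n-2$) and $y=e_1+e_2$ as the unique support element of multiplicity $k$; then $x\in\{e_1,e_2\}$, say $x=e_1$ by symmetry.

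In this final subcase the multiplicity multiset of $S'$ on $\{e_1,e_2,e_1+e_2\}$ is $\{n-2,n-1,k+1\}$, which must equal the multiset $\{n-1,n-1,k\}$ required of $S'$ by Item~3. Matching the elements of multiplicity $n-1$ twice on the right against the multiplicities on the left forces $k+1=n-1$, i.e., $k=n-2$. Then the unique element of multiplicity $n-2=k$ in $S'$ must correspond to the summand $e_1'+e_2'$ in the new basis description, giving $e_1=e_1'+e_2'$, while the two elements of multiplicity $n-1$ match as $\{e_2,e_1+e_2\}=\{e_1',e_2'\}$. Summing the latter yields $e_1'+e_2'=e_2+(e_1+e_2)=e_1+2e_2$, and equating with $e_1$ forces $2e_2=0$, which contradicts $n\geq 4$ (implied by $k=n-2\geq 2$). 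This third subcase is the main obstacle: only here does a naive multiplicity count fail to exclude the perturbation, and a genuine basis-arithmetic contradiction is needed to close it out.
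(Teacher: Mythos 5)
Your proposal is correct and follows essentially the same route as the paper: the $k=1$ case via both sequences being zero-sum, and the $k\geq 2$ cases via the rigidity of the conjectured three-element support structure, with the decisive obstruction being that $e_2+(e_1+e_2)=e_1$ would force $2e_2=0$. Your explicit three-way case split on the location of $y$ (including the $y^{[n]}$ zero-sum in the multiplicity-$n$ subcase) is just a more unpacked version of the paper's terser observation that $\supp(S')$ must coincide with $\supp(S)$ and the basis is then forced.
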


\begin{proof}
 If $k=1$, then $S$ and $S'$ satisfying the conclusion of Conjecture \ref{conj-shortzs} implies they are both zero-sum sequences, which forces $x=y$.
If $n=2$, then $k=1\in [1,n-1]$ is forced. If $k\in [2,n-2]$, then $n\geq 4$ and $S=e_1^{[n-1]}\bdot e_2^{[n-1]}\bdot (e_1+e_2)^{[k]}$ with $e_1+(e_1+e_2)\neq e_2$ and $e_2+(e_1+e_2)\neq e_1$ in view of $n\geq 3$. Since $n\geq 3$ and $k\geq 2$, we also guaranteed $e_1,e_2,e_1+e_2\in \supp(S\bdot x^{[-1]})$. Consequently, since $S'$ also satisfies the conclusion of Conjecture \ref{conj-shortzs}, it must do so with respect to the basis $(e_1,e_2)$, forcing $x=y$. Finally, if $k=n-1$ and $n\geq 3$, then $S=e_1^{[n-1]}\bdot e_2^{[n-1]}\bdot e_3^{[n-1]}$ and $\supp(S)=\supp(S\bdot x^{[-1]})\subseteq \supp(S')$ in view of $n\geq 3$. Thus, since $S'$ also satisfies the conclusion of Conjecture \ref{conj-shortzs}, it must do so with $\supp(S')=\supp(S)$, forcing $x=y$.
\end{proof}

We continue by  showing how Property A implies the more detailed structure given in Conjecture \ref{conj-shortzs}.

\begin{lemma}\label{lem-genPropB}
Let $n\geq 4$, let $k\in [2,n-2]$, let $G=C_n\oplus C_n$, and let $S\in \Fc(G)$ be a sequence with $|S|=2n-2+k$ and $0\notin \Sigma_{\leq 2n-1-k}(S)$. Suppose there are $e_1,\,e_2\in G$ with $\supp(S)\subseteq \{e_1\}\cup \big(\la e_1\ra+e_2\big)$. Then there is some $f_2\in \la e_1\ra+e_2$ such that $(e_1,f_2)$ is a basis for $G$ and $S=e_1^{[n-1]}\bdot f_2^{[n-1]}\bdot (e_1+f_2)^{[k]}$.
\end{lemma}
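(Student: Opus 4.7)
The plan is to first show that $(e_1,e_2)$ is itself a basis for $G$, and then apply Theorem \ref{thm-hamconj} to pin down the structure of $S$.

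For the basis step, let $H=\la e_1,e_2\ra\leq G$, so $\supp(S)\subseteq H$, and write $H\cong C_a\oplus C_b$ with $a\mid b\mid n$. If $b<n$, then $b\leq n/2$, so $\mathsf D(H)=a+b-1\leq n-1\leq 2n-1-k$ yields a zero subsequence of length at most $2n-1-k$, contradicting the hypothesis. If $b=n$ but $a<n$, then $a\leq n/2$; using the known value $\eta(C_a\oplus C_n)=2a+n-2\leq 2n-2\leq|S|$ yields a zero subsequence of $S$ of length $\leq\exp(H)=n\leq 2n-1-k$, again a contradiction. Hence $H=G$ and $(e_1,e_2)$ is a basis of $G$.

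Write $S=e_1^{[a]}\bdot{\prod}^\bullet_{i\in[1,\ell]}(x_ie_1+e_2)$ with $a+\ell=2n-2+k$ and $x_i\in\Z/n\Z$. A subsequence of $S$ consisting of $b$ copies of $e_1$ together with the terms indexed by $I\subseteq[1,\ell]$ with $|I|=c$ has sum $(\sum_{i\in I}x_i+b)e_1+ce_2$, which vanishes iff $n\mid c$ and $\sum_{i\in I}x_i+b\equiv 0\pmod n$. Taking $c=0$ shows $a\leq n-1$; since $|S|<2n$, the only other relevant case is $c=n$ with $b\leq n-1-k$. Thus, on the sequence $T_0=x_1\bdot\cdots\bdot x_\ell\in\Fc(\Z/n\Z)$, the condition $0\notin\Sigma_{\leq 2n-1-k}(S)$ translates to $\Sigma_n(T_0)\subseteq\{1,\ldots,k\}$; in particular $0\notin\Sigma_n(T_0)$ and $|\Sigma_n(T_0)|\leq k$. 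The former forces $\mathsf h(T_0)\leq n-1$, and since $\ell\geq n-1+k\geq n+1$, also $|\supp(T_0)|\geq 2$. Applying Theorem \ref{thm-hamconj} to $T_0$ with $k'=2$ yields $|\Sigma_n(T_0)|\geq|T_0|-n+1=n-1+k-a$, forcing $a=n-1$ and $\ell=n-1+k$; were $|\supp(T_0)|\geq 3$, applying the same theorem with $k'=3$ would force $|\Sigma_n(T_0)|\geq k+1$, so $|\supp(T_0)|=2$.

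Write $T_0=\alpha^{[p]}\bdot\beta^{[q]}$ with $p+q=n-1+k$, and (from $\mathsf h(T_0)\leq n-1$) $p,q\in[k,n-1]$. A direct calculation gives $\Sigma_n(T_0)=\{i\gamma:i\in[n-q,p]\}$ where $\gamma=\alpha-\beta$, a $k$-term arithmetic progression in $\Z/n\Z$ with common difference $\gamma$. Since this set must equal $[1,k]\subsetneq\Z/n\Z$, and since for $k\geq 2$ a $k$-term arithmetic progression in $\Z/n\Z$ with underlying set $[1,k]$ has common difference $\pm 1$, we conclude $\gamma\equiv\pm 1\pmod n$; matching endpoints then pins down $\{p,q\}=\{n-1,k\}$. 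Taking $f_2\in\la e_1\ra+e_2$ to be the term appearing $n-1$ times in $S$ then yields $S=e_1^{[n-1]}\bdot f_2^{[n-1]}\bdot(e_1+f_2)^{[k]}$; since $f_2-e_2\in\la e_1\ra$, $(e_1,f_2)$ remains a basis for $G$. The main obstacle is the basis step, where we invoke the value $\eta(C_a\oplus C_n)=2a+n-2$; thereafter the argument reduces cleanly to Theorem \ref{thm-hamconj} followed by the rigidity of short arithmetic progressions modulo $n$.
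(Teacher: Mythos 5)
Your proposal is correct in outline and follows essentially the same route as the paper: show $\la e_1,e_2\ra=G$, project the terms of $\la e_1\ra+e_2$ onto $\la e_1\ra$, apply Theorem \ref{thm-hamconj} with $k=2$ and then $k=3$ to force $\vp_{e_1}(S)=n-1$, $\Sigma_n(T_0)=[1,k]$ and $|\supp(T_0)|=2$, and finish with the rigidity of length-$k$ arithmetic progressions in $\Z/n\Z$. Your basis step (via $\mathsf D(C_a\oplus C_b)=a+b-1$ and $\eta(C_a\oplus C_n)=2a+n-2$) differs cosmetically from the paper's single estimate but is fine. One step, however, is not justified as written: the assertion that $0\notin \Sigma_{\leq 2n-1-k}(S)$ ``translates to $\Sigma_n(T_0)\subseteq\{1,\dots,k\}$.'' A forbidden zero-sum built from $c=n$ projected terms and $b$ copies of $e_1$ requires $b\leq \min\{a,\,n-1-k\}$, so when $a<n-1-k$ you only get the weaker containment $\Sigma_n(T_0)\subseteq\{1,\dots,n-1-a\}$, and your inequality $k\geq |\Sigma_n(T_0)|$ is then unavailable. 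The gap is easily closed — in that regime Theorem \ref{thm-hamconj} gives $|\Sigma_n(T_0)|\geq n-1+k-a>n-1-a$, a contradiction, which is exactly how the paper disposes of the case $\ell\leq n-1-k$ before running your main computation — but the case split must appear. (Also, the justification ``since $|S|<2n$'' for $c\in\{0,n\}$ is false, since $|S|=2n-2+k\geq 2n$; what you mean is that the forbidden length $2n-1-k$ is less than $2n$.) With those two repairs the argument is complete and matches the paper's.
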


\begin{proof}
By hypothesis, $\supp(S)\subseteq \{e_1\}\cup\big(\la e_1\ra+e_2\big)\subseteq \la e_1,e_2\ra$. Let $G'=\la e_1,e_2\ra\cong C_{m_1}\oplus C_{m_2}$ with $m_1\mid m_2$. Let $k'=k+2n-m_1-m_2\geq k$.
If $G'=\la e_1,e_2\ra$ were a proper subgroup, then the hypotheses $|S|=2n-2+k=m_1+m_2-1+(k'-1)$ ensures that $S$ contains a nontrivial zero-sum with length at most $\max\{m_1+m_2-1-(k'-1),m_2\}=\max\{2(m_1+m_2-n)-k,m_2\}
\leq \max\{2n-1-k,n\}=2n-1-k$, contradicting the hypothesis $0\notin \Sigma_{\leq 2n-1-k}(S)$. Therefore $G'=\la e_1,e_2\ra=G$, implying that $(e_1,e_2)$ is a basis for $G$.

In view of our hypotheses, we have $S=e_1^{[\ell]}\bdot \prod^\bullet_{i\in [1, 2n-2+k-\ell]}(x_ie_1+e_2)$ for some $\ell\geq 0$ and $x_i\in [0,n-1]$. We must have $\ell\leq n-1$, else $S$ will contain an $n$-term zero-sum, contrary to the hypothesis  $0\notin \Sigma_{\leq 2n-1-k}(S)$. Let $S_2=\prod^\bullet_{i\in [1, 2n-2+k-\ell]}x_ie_1$ and $S_1=e_1^{[\ell]}$. Then $|S_2|=2n-2+k-\ell\geq n-1+k\geq n+1$. We also have $\mathsf h(S_2)\leq n-1$, else $S$ again contains an $n$-term zero-sum, contrary to hypothesis. Thus $|\supp(S_2)|\geq 2$.

Suppose $|S_1|=\ell\leq n-1-k$. Then the hypothesis $0\notin \Sigma_{\leq 2n-1-k}(S)$ implies $0\notin \Sigma_n(S_2)+(\Sigma(S_1)\cup \{0\})$, whence $\Sigma_{n}(S_2)\subseteq [1,n-1-\ell]_{e_1}$. In particular, $|\Sigma_n(S_2)|\leq n-1-\ell$. However, applying Theorem \ref{thm-hamconj} to $S_2$ (using $k=2$), we obtain $|\Sigma_n(S_2)|\geq |S_2|-n+1=n-1+k-\ell>n-1-\ell$, contradicting what was just noted. So we can now assume $|S_1|=\ell\geq n-k$.

Since $|S_1|=\ell\geq n-k$, the hypothesis $0\notin \Sigma_{\leq 2n-1-k}(S)$ implies that $0\notin \Sigma_n(S_2)+(\Sigma_{\leq n-k-1}(S_1)\cup \{0\})$, whence $\Sigma_{n}(S_2)\subseteq [1,k]_{e_1}$. In particular, $|\Sigma_n(S_2)|\leq k$. Applying Theorem \ref{thm-hamconj} to $S_2$ (using $k=2$), and then using  the estimate $\ell\leq n-1$,  we obtain $|\Sigma_n(S_2)|\geq |S_2|-n+1=n-1+k-\ell\geq k$. Thus equality must hold in all these estimates. In particular,  $\Sigma_n(S_2)=[1,k]_{e_1}$, $\ell=n-1$, and $|\Sigma_n(S_2)|= |S_2|-n+1$. It now follows from Theorem \ref{thm-hamconj} applied to $S_2$ (using $k=3$) that $|\supp(S_2)|=2$.

Let $ye_1\in \supp(S_2)$ be an element with maximum multiplicity in $S_2$, and let $f_2=ye_1+e_2$. Then $(e_1,f_2)$ is also a basis for $G$ and  \be\label{structure}S=e_1^{[n-1]}\bdot f_2^{[n-1-r]}\bdot (xe_1+f_2)^{k+r}\ee for some $x\in [1,n-1]$ and $r\in [0,\frac{n-1-k}{2}]$.
Let $S'_2=0^{[n-1-r]}\bdot (xe_1)^{[k+r]}$. Repeating the argument of the previous paragraph using $S'_2$ in place of $S_2$, we again conclude that $\Sigma_n(S'_2)=[1,k]_{e_1}$. However, in view of the structure of $S$ given by \eqref{structure}, we have $\Sigma_n(S'_2)=(r+1)xe_1+[0,k-1]_{xe_1}$.
Thus \be\label{equality}[1,k]_{e_1}=(r+1)xe_1+[0,k-1]_{xe_1}.\ee Since $k\geq 2$, the set $[1,k]_{e_1}$ is not contained in a coset of a proper subgroup of $\la e_1\ra$. Hence \eqref{equality} ensures $\la xe_1\ra=\la e_1\ra$. The left-hand side of \eqref{equality} is an arithmetic progression with difference $e_1$ and length $k$, with $2\leq k\leq n-2=\ord(e_1)-2$. It is well known and easily derived that, for such sets, the difference $e_1$ is unique up to sign. The right-hand side of \eqref{equality} is also an arithmetic progression with difference $xe_1$ and length $k$, with $2\leq k\leq n-2=\ord(xe_1)-2$. Thus, by the uniqueness of the difference, it follows that $xe_1=\pm e_1$.

If $xe_1=e_1$,  then \eqref{equality} forces $r=0$ in view of $k<n$, yielding the desired structure for $S$. If $xe_1=-e_1$, then \eqref{equality} forces $r=n-k-1$ in view of $k<n$.  However, since $r\in [0,\frac{n-1-k}{2}]$, this in only possible if $k\geq n-1$, which is contrary to hypothesis.
\end{proof}

The following lemma shows that the extension of a sequence satisfying Conjecture \ref{conj-shortzs}, obtained by concatenating an additional term, also satisfies Conjecture \ref{conj-shortzs}.

\begin{lemma}\label{lem-k-to-k+1}
Let $n\geq 2$, let $k\in [1,n-1]$ with either $k=1$ or $k\in [1,n-2]$,  let $G=C_n\oplus C_n$, and let $S\in \Fc(G)$ be a sequence with  $|S|=2n-2+k$ and $0\notin \Sigma_{\leq 2n-1-k}(S)$ such that  Conjecture \ref{conj-shortzs} holds for $S$. Suppose  there is some $g\in G$ such that $0\notin \Sigma_{\leq 2n-2-k}(S\bdot g)$.
Then there exists a basis $(e_1,e_2)$ for $G$ such that $S\bdot g=e_1^{[n-1]}\bdot e_2^{[n-1]}\bdot (e_1+e_2)^{[k+1]}$ with $g=e_1+e_2$. In particular, Conjecture \ref{conj-shortzs} holds for $S\bdot g$ (for $k\leq n-2$).
\end{lemma}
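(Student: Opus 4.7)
The plan is to proceed by cases on $k$, using the explicit form of $S$ from the hypothesis that Conjecture \ref{conj-shortzs} holds for $S$, and analyzing the zero-sum subsequences of $S \bdot g$ forced by $0 \notin \Sigma_{\leq 2n-2-k}(S \bdot g)$. Set $\Sigma_{\geq 3}(S) := \{\sigma(T) : T \mid S,\ |T| \geq 3\}$.

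\emph{Case $k = 1$.} Then $S$ is a minimal zero-sum of length $2n-1$ and, by Conjecture \ref{conj-shortzs}.2, $S = e_1^{[n-1]} \bdot {\prod}^\bullet_{i \in [1, n]} (x_i e_1 + e_2)$ for some basis $(e_1, e_2)$ with $\sum_i x_i \equiv 1 \pmod n$. The sub-case $n = 2$ is immediate by inspection, so assume $n \geq 3$. Since $\sigma(S) = 0$ and $S$ is minimal, every zero-sum subsequence of $S \bdot g$ of length $\leq 2n-3$ must contain $g$, so by complementation within $S$, the hypothesis is equivalent to $g \neq 0$ and $g \notin \Sigma_{\geq 3}(S)$. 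I would then compute $\Sigma_{\geq 3}(S)$ coset-by-coset in the partition $G = \bigsqcup_{j = 0}^{n-1} (\la e_1 \ra + j e_2)$, noting that a subsum employing $|I|$ of the terms $x_i e_1 + e_2$ and $p$ of the $e_1$-terms lies in $\la e_1 \ra + |I| e_2$ with first coordinate $p + \sum_{i \in I} x_i$. Elementary observations ($[0, n-1] + \{s\} = \Z/n\Z$, and $[1, n-1] + \{s_1, s_2\} = \Z/n\Z$ for $s_1 \neq s_2$), together with the fact that $\sum x_i \equiv 1 \pmod n$ rules out all $x_i$ being equal, yield that $\la e_1 \ra + j e_2 \subseteq \Sigma_{\geq 3}(S)$ for every $j \neq 1$ (the coset $\la e_1 \ra + 2 e_2$ being the most delicate, requiring $|\{x_i + x_j : i < j\}| \geq 2$). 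Thus $g = \alpha e_1 + e_2$ for some $\alpha$, and the residual condition $\alpha \notin \{x_i\} + [2, n-1]$ becomes $\{x_i\} \subseteq \{\alpha - 1, \alpha\}$. Combined with $\sum x_i \equiv 1 \pmod n$, this forces exactly one $x_i$ equal to $\alpha$ and the other $n-1$ equal to $\alpha - 1$. Setting $f_2 := (\alpha - 1) e_1 + e_2$ produces a basis $(e_1, f_2)$ with $S \bdot g = e_1^{[n-1]} \bdot f_2^{[n-1]} \bdot (e_1 + f_2)^{[2]}$ and $g = e_1 + f_2$.

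\emph{Case $k \in [2, n - 2]$} (so $n \geq 4$). By Conjecture \ref{conj-shortzs}.3, $S = e_1^{[n-1]} \bdot e_2^{[n-1]} \bdot (e_1+e_2)^{[k]}$ for some basis $(e_1, e_2)$. Writing $g = p e_1 + q e_2$ with $p, q \in [0, n - 1]$, I would parametrize the zero-sums $T = U \bdot g$ of $S \bdot g$ by $U = e_1^{[a]} \bdot e_2^{[b]} \bdot (e_1 + e_2)^{[c]}$ where $a \equiv -p - c$ and $b \equiv -q - c \pmod n$ (so $a, b \in [0, n-1]$ are determined by $c \in [0, k]$), and demand $|T| = a + b + c + 1 \geq 2n - 1 - k$ for every $c \in [0, k]$. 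The argument runs in three steps. First, $p \geq 1$: else $U = e_2^{[n-q]}$ yields a zero-sum of length $n - q + 1 \leq n \leq 2n - 2 - k$ (and $q = 0$ forces $g = 0$), a contradiction; symmetrically $q \geq 1$. Second, $p \leq n - k - 1$: if $p \geq n - k$, then $c = n - p \in [1, k]$ gives $a = 0$, and the sub-case $p \geq q$ produces $|T| = n - q + 1 \leq n$, a contradiction; the sub-case $p < q$ forces $q \geq n - k + 1$, so that the further choice $c = n - q \in [0, k - 1]$ gives $b = 0$ and $|T| = n - p + 1 \leq n$, again a contradiction; symmetrically $q \leq n - k - 1$. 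Third, with $p, q \leq n - k - 1$, the choice $c = k$ lies in the ``no-wrap'' regime $a = n - p - k$, $b = n - q - k$, and $|T| = 2n - p - q - k + 1 \geq 2n - 1 - k$ forces $p + q \leq 2$, hence $p = q = 1$ and $g = e_1 + e_2$. This gives $S \bdot g = e_1^{[n-1]} \bdot e_2^{[n-1]} \bdot (e_1 + e_2)^{[k+1]}$ as required.

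The ``in particular'' clause---that Conjecture \ref{conj-shortzs} holds for $S \bdot g$ when $k \leq n - 2$---follows directly from the derived form: for $k + 1 \in [2, n-2]$ it matches Conjecture \ref{conj-shortzs}.3, and for $k + 1 = n - 1$ it matches Conjecture \ref{conj-shortzs}.4 with $x = 1$. The principal technical hurdle is the enumeration of modular wrap-around sub-cases in Case $k \in [2, n - 2]$ when $p$ or $q$ is near $n - 1$, which requires carefully picking the right $c$ value in each sub-case to produce a forbidden zero-sum; a subsidiary subtlety in Case $k = 1$ is the full coverage of the $\la e_1 \ra + 2 e_2$ coset in $\Sigma_{\geq 3}(S)$, which leans on $\sum x_i \not\equiv 0 \pmod n$ to preclude a degenerate single-valued pair-sum set.
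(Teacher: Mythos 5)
Your proposal is correct. For $k\in[2,n-2]$ it is essentially identical to the paper's argument: both exhibit the same forbidden short zero-sums (your choice $c=n-p$ with $a=0$ is the paper's $e_2^{[x_1-x_2]}\bdot(e_1+e_2)^{[n-x_1]}\bdot g$, and your choice $c=k$ is the paper's $e_1^{[n-k-x_1]}\bdot e_2^{[n-k-x_2]}\bdot(e_1+e_2)^{[k]}\bdot g$), the only difference being that you enumerate the wrap-around sub-cases explicitly where the paper normalizes $x_1\geq x_2$ by symmetry. For $k=1$ the execution differs: you use the complementation equivalence ($0\notin\Sigma_{\leq 2n-3}(S\bdot g)$ iff $g\neq 0$ and $g\notin\Sigma_{\geq 3}(S)$, valid since $S$ is a minimal zero-sum) and then compute $\Sigma_{\geq 3}(S)$ coset-by-coset in $G/\la e_1\ra$, whereas the paper works with the projected sequence $S_2=\prod^\bullet y_ie_1$, bounds $|\Sigma_{n-x_2}(S_2)|\geq 2$ by a term-swapping argument to force $x_2=1$, and then identifies $\supp(S_2)=\{0,e_1\}$ via $\Sigma_{n-1}(S_2)=\sigma(S_2)-\supp(S_2)$. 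The two routes reach the same endpoint with comparable effort; your coset enumeration is more self-contained and transparent (the only delicate point, full coverage of the coset $\la e_1\ra+2e_2$, is correctly reduced to the existence of two distinct pair-sums $x_i+x_j$, which follows from $\sum x_i\equiv 1\not\equiv 0\pmod n$), while the paper's version isolates the single quantity $x_2$ early and reuses machinery appearing elsewhere in the paper.
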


\begin{proof}
Let $(e_1,e_2)$ be an arbitrary basis for which Conjecture \ref{conj-shortzs} holds for $S$. Let $g=x_1e_1+x_2e_2$ with $x_1,\, x_2\in [0,n-1]$.

\subsection*{Case 1:}  $k\in [2,n-2]$.

In this case,  $n\geq 4$ and $S=e_1^{[n-1]}\bdot e_2^{[n-1]}\bdot (e_1+e_2)^{[k]}$. By symmetry, we can w.l.o.g. assume $x_1\geq x_2$. If $x_2=0$, then $S\bdot g$ contains $n$ terms from $\la e_1\ra\cong C_n$, and thus contains a zero-sum subsequence of length at most $\mathsf D(C_n)=n$, contradicting that $0\notin \Sigma_{2n-2-k}(S\bdot g)$ (in view of $k\leq n-2$). Therefore $x_1\geq x_2\geq 1$.
If $x_1=x_2=1$, the desired conclusion follows, so we can  assume $x_1\geq 2$. If $x_1\geq n-k$, then $e_2^{[x_1-x_2]}\bdot (e_1+e_2)^{[n-x_1]}\bdot (x_1e_1+x_2e_2)$ is a zero-sum subsequence of $S\bdot g$ with length $n-x_2+1\leq n$, contradicting that $0\notin \Sigma_{2n-2-k}(S\bdot g)$. On the other hand, if $x_2\leq x_1\leq n-k$, then $e_1^{[n-k-x_1]}\bdot e_2^{[n-k-x_2]}\bdot (e_1+e_2)^{[k]}\bdot (x_1e_1+x_2e_2)$ is a zero-sum subsequence of $S\bdot g$ with length $2n-k-x_1-x_2+1\leq 2n-2-k$ (with the latter inequality in view of $x_1\geq 2$ and $x_2\geq 1$), again contradicting that $0\notin \Sigma_{2n-2-k}(S\bdot g)$.

\subsection*{Case 2:} $k=1$.

In this case, $n\geq 2$ and  $$S=e_1^{[n-1]}\bdot {\prod}^\bullet_{i\in [1,n]}(y_ie_1+e_2)$$ for some $y_1,\ldots,y_n\in [0,n-1]$ with $y_1+\ldots+y_n\equiv 1\mod n$.
If $n=2$, then $S=e_1\bdot e_2\bdot (e_1+e_2)$, and our hypothesis   $0\notin \Sigma_{\leq 2n-2-k}(S\bdot g)=\Sigma_{\leq 1}(S\bdot g)$ simply means $g\neq 0$. In this case, replacing the basis $(e_1,e_2)$ by a basis $(f_1,f_2)$ with $g\notin\{f_1,f_2\}$, we find $S=f_1\bdot f_2\bdot (f_1+f_2)$ with $g=f_1+f_2$, and the desired result follows. Therefore we now assume $n\geq 3$, so $k=1\leq n-2$.

If $x_2=0$, then $S\bdot g$ contains $n$ terms from $\la e_1\ra\cong C_n$, and thus contains a zero-sum subsequence of length at most $\mathsf D(C_n)=n$, contradicting that $0\notin \Sigma_{2n-2-k}(S\bdot g)$ (in view of $k\leq n-2$). Therefore $x_2\geq 1$.

Let $S_2=\prod_{i\in [1,n]}^\bullet y_ie_1$. For any $(-x_1+z)e_1\in \Sigma_{n-x_2}(S_2)$, where $z\in [1,n]$, we have a subset $I\subseteq [1,n]$ with  $|I|=n-x_2$ and  $\Summ{i\in I}(y_ie_1+e_2)=(-x_1+z)e_1+(n-x_2)e_2$, meaning $e_1^{[n-z]}\bdot (x_1e_1+x_2e_2)\bdot \prod_{i\in I}^\bullet (y_ie_1+e_2)$ is a zero-sum subsequence of $S\bdot g$ of length $2n-z+1-x_2$. Since $0\notin \Sigma_{\leq 2n-2-k}(S\bdot g)=\Sigma_{2n-3}(S\bdot g)$, this forces \be\label{morg}z+x_2\leq 3.\ee Now $S_2$ is a sequence of $n$ terms from a cyclic group of order $n$ with $n-x_2\in [1,n-1]$. Moreover, since $y_1+\ldots+y_n\equiv 1\mod n$, we have $|\supp(S_2)|\geq 2$.

If $T\mid S_2$ is any subsequence of length $n-x_2$, then $n-x_2\in [1,n-1]=[1,|S_2|-1]$ ensures that both $T$ and $T^{[-1]}\bdot S_2$ contain at least one term, and since $|\supp(S_2)|\geq 2$, it is thus possible to find terms $g\in \supp(T)$ and $h\in \supp(T^{[-1]}\bdot S_2)$ with $g\neq h$. This ensures that $T\bdot g^{[-1]}\bdot h$ is also a subsequence of $S_2$ with length $n-x_2$, and one with sum $\sigma(T)-g+h\neq \sigma(T)$. Thus  $|\Sigma_{n-x_2}(S_2)|\geq 2$, meaning it is possible to find $I$ as defined above with $z\geq 2$. Combined with \eqref{morg} and $x_2\geq 1$, it follows that only $x_2=1$ is possible, whence $g=x_1e_1+e_2$.
Since $(e_1,xe_1+e_2)$ is also a  basis for which Conjecture \ref{conj-shortzs} holds for $S$, for any $x\in \Z$, we can replace the arbitrary basis $(e_1,e_2)$ for which Conjecture \ref{conj-shortzs} holds for $S$ with the basis $(e_1,(x_1-1)e_1+e_2)$, thereby allowing us to w.l.o.g. assume $x_1=1$ in view of  $e_1+\Big((x_1-1)e_1+e_2\Big)=g$. Thus we now have $g=e_1+e_2$ with $x_1=x_2=1$.

Since $x_2=1$, we have $$\Sigma_{n-x_2}(S_2)=\Sigma_{n-1}(S_2)=\sigma(S_2)-\Sigma_1(S_2)=e_1-\supp(S_2),$$ with the final inequality above in view of $y_1+\ldots+y_n\equiv 1\mod n$. Since $x_2=1$, \eqref{morg} ensures that $z\leq 2$, which combined with $z\in [1,n]$ forces $z\in \{1,2\}$. Thus $$e_1-\supp(S_2)=\Sigma_{n-x_2}(S_2)\subseteq \{-x_1e_1+e_1,-x_1e_1+2e_1\}=\{0,e_1\},$$ whence $$\supp(S_2)=\{0,e_1\}$$ in view of $|\supp(S_2)|\geq 2$. It follows that $y_i\equiv 0$ or $1\mod n$ for every $i\in [1,n]$. Letting $a\in [1,n-1]$ be the number of $i\in [1,n]$ with $y_i\equiv 1\mod m$, we find $1\equiv y_1+\ldots+y_n\equiv a+(n-a)(0)\mod n$, implying $a\equiv 1\mod n$. Thus $S=e_1^{[n-1]}\bdot e_2^{[n-1]}\bdot (e_1+e_2)$ with $g=e_1+e_2$, as desired.
\end{proof}

The following lemma is the reverse of Lemma \ref{lem-k-to-k+1}, showing that, if Conjecture \ref{conj-shortzs} holds for a sequence and we remove a term, then Conjecture \ref{conj-shortzs} also holds for the resulting subsequence.

\begin{lemma}\label{lem-k+1-to-k}
Let $n\geq 3$, let $k\in [1,n-2]$,  let $G=C_n\oplus C_n$, and let $S\in \Fc(G)$ be a sequence with  $|S|=2n-2+k$ and $0\notin \Sigma_{\leq 2n-1-k}(S)$. Suppose  there is some $g\in G$ such that $0\notin \Sigma_{\leq 2n-2-k}(S\bdot g)$ with Conjecture \ref{conj-shortzs} holding for $S\bdot g$.
Then there exists a basis $(e_1,e_2)$ for $G$ such that $S\bdot g=e_1^{[n-1]}\bdot e_2^{[n-1]}\bdot (e_1+e_2)^{[k+1]}$ with $g=e_1+e_2$. In particular, Conjecture \ref{conj-shortzs} holds for $S$.
\end{lemma}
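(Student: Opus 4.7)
The plan is to invoke Conjecture~\ref{conj-shortzs} directly on the longer sequence $S\bdot g$. Since $|S\bdot g|=2n-1+k=2n-2+(k+1)$ and $0\notin \Sigma_{\leq 2n-2-k}(S\bdot g)=\Sigma_{\leq 2n-1-(k+1)}(S\bdot g)$, the hypotheses of the conjecture hold for $S\bdot g$ with parameter $k+1\in[2,n-1]$. The case split is therefore whether Part~3 applies ($k+1\in [2,n-2]$, i.e.\ $k\in [1,n-3]$) or Part~4 applies ($k+1=n-1$, i.e.\ $k=n-2$). In either regime $g\in\supp(S\bdot g)$ is one of three support elements, and the strategy is to rule out the ``wrong'' positions for $g$ by exhibiting a nontrivial zero-sum of length at most $2n-1-k$ that persists in $S$, contradicting $0\notin \Sigma_{\leq 2n-1-k}(S)$, and then in the surviving configurations to choose a basis placing $g$ into the $e_1+e_2$ role.

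In the Part~3 regime there is a basis $(f_1,f_2)$ with $S\bdot g=f_1^{[n-1]}\bdot f_2^{[n-1]}\bdot (f_1+f_2)^{[k+1]}$, so $g\in\{f_1,f_2,f_1+f_2\}$. If $g=f_1$, then $f_1^{[n-k-1]}\bdot f_2^{[n-k-1]}\bdot (f_1+f_2)^{[k+1]}$ still divides $S$ (using $n-k-1\leq n-2$), has sum $nf_1+nf_2=0$, and has length exactly $2n-1-k$, a contradiction; the case $g=f_2$ fails symmetrically. Thus $g=f_1+f_2$ and one takes $(e_1,e_2)=(f_1,f_2)$.

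In the Part~4 regime $k+1=n-1$ and $2n-1-k=n+1$, and there is a basis $(f_1,f_2)$ together with $x\in[1,n-1]$ coprime to $n$ such that $S\bdot g=f_1^{[n-1]}\bdot f_2^{[n-1]}\bdot (xf_1+f_2)^{[n-1]}$. Any zero-sum $af_1+bf_2+c(xf_1+f_2)=0$ forces $b+c\equiv 0$ and $a+cx\equiv 0\pmod n$; setting $a=1$, $b=x^{-1}$, $c=n-x^{-1}$ (interpreted mod $n$ in $[1,n-1]$) produces the unique nontrivial solution of length $n+1$. Subject to the multiplicity bounds on $a,b,c$ in $S$ (which depend on which support element equals $g$), this zero-sum persists in $S$ whenever $g=f_1$ (any $x$), when $g=f_2$ with $x\not\equiv -1\pmod n$, or when $g=xf_1+f_2$ with $x\neq 1$, contradicting the hypothesis in each of these three situations. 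The surviving configurations are $g=f_2$ with $x\equiv -1$, where the basis $(e_1,e_2)=(xf_1+f_2,f_1)$ yields $e_1+e_2=f_2=g$, and $g=xf_1+f_2$ with $x=1$, where $(e_1,e_2)=(f_1,f_2)$ yields $e_1+e_2=g$. In both cases $S\bdot g$ rewrites as $e_1^{[n-1]}\bdot e_2^{[n-1]}\bdot(e_1+e_2)^{[n-1]}$ with $g=e_1+e_2$.

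The main obstacle is the Part~4 case: unlike the Part~3 form, the Part~4 form is not symmetric in its three support elements, so one must simultaneously track where $g$ sits and which value of $x$ is compatible with a change of basis putting $g$ in the $e_1+e_2$ role. The payoff is that the length-$(n+1)$ zero-sum arithmetic is sharp enough to single out exactly the two $(g,x)$ combinations that admit such a basis.
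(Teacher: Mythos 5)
Your proof is correct, and while the first half coincides with the paper's argument, the second half takes a genuinely different and more elementary route. Both proofs begin identically: apply Conjecture~\ref{conj-shortzs} to $S\bdot g$ with parameter $k+1$, and in the case where the structure is $f_1^{[n-1]}\bdot f_2^{[n-1]}\bdot (f_1+f_2)^{[k+1]}$, rule out $g\in\{f_1,f_2\}$ with the same length-$(2n-1-k)$ zero-sum $f_1^{[n-k-1]}\bdot f_2^{[n-k-1]}\bdot (f_1+f_2)^{[k+1]}$. The divergence is in the $k=n-2$ case with $S\bdot g=f_1^{[n-1]}\bdot f_2^{[n-1]}\bdot (xf_1+f_2)^{[n-1]}$ and $x\neq 1$: the paper disposes of $x=n-1$ by a change of basis reducing to $x=1$, and for $x\in[2,n-2]$ invokes Lemma~\ref{lem-genPropB} (whose proof rests on Theorem~\ref{thm-hamconj}) applied to $S$ itself, using $\supp(S)\subseteq\{f_1\}\cup(\la f_1\ra+f_2)$, to recover the structure of $S$ directly. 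You instead exhibit the single explicit zero-sum $f_1\bdot f_2^{[x^*]}\bdot (xf_1+f_2)^{[n-x^*]}$ of length exactly $n+1=2n-1-k$, where $x^*\in[1,n-1]$ is the inverse of $x$ modulo $n$, and check which multiplicity constraint fails after removing $g$; this kills every $(g,x)$ configuration except $(f_2,\,x\equiv -1)$ and $(xf_1+f_2,\,x=1)$, each of which admits the required basis. Your computation checks out (in particular $b+c=n$ and $a=1$ are forced for a length-$(n+1)$ zero-sum on this support, and the surviving bases $(xf_1+f_2,f_1)$ and $(f_1,f_2)$ do put $g$ in the $e_1+e_2$ slot). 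What your approach buys is independence from Lemma~\ref{lem-genPropB} and hence from the Hamidoune-conjecture machinery for this lemma; what the paper's approach buys is uniformity with the rest of Section~2, where Lemma~\ref{lem-genPropB} is already available and reused. Either way the conclusion, including the trivial final deduction that $S=e_1^{[n-1]}\bdot e_2^{[n-1]}\bdot(e_1+e_2)^{[k]}$ satisfies Part~3 (or Part~2 when $k=1$) of the conjecture, goes through.
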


\begin{proof}
Let $(e_1,e_2)$ be an arbitrary basis for which Conjecture \ref{conj-shortzs} holds for $S\bdot g$. Then $$S\bdot g=e_1^{[n-1]}\bdot e_2^{[n-1]}\bdot (xe_1+e_2)^{[k+1]}$$ for some $x\in [1,n-1]$ with $\gcd(x,n)=1$ and either $x=1$ or $k=n-2$.

Suppose $x=1$. In such case, if $g=e_1+e_2$, the proof is complete, so either $g=e_2$ or $g=e_1$. But now $(e_1+e_2)^{[k+1]}\bdot e_1^{[n-k-1]}\bdot e_2^{[n-k-1]}$ is a zero-sum subsequence of $S$ (in view of the hypothesis $k\geq 1$) with length $2n-1-k$, contradicting that $0\notin \Sigma_{\leq 2n-1-k}(S)$. So we can now assume $x\geq 2$ with $k=n-2$, in which case  $$S\bdot g=e_1^{[n-1]}\bdot e_2^{[n-1]}\bdot (xe_1+e_2)^{[n-1]}.$$

If $x=n-1$, then using the basis $(e_1, -e_1+e_2)$ in place of $(e_1,e_2)$, we find ourselves in the already completed case when $x=1$. Thus we can assume $x\in [2,n-2]$ with $\gcd(x,n)=1$, implying $n\geq 5$. Thus $k=n-2\geq 3$ with $\supp(S)\subseteq \{e_1\}\cup (\la e_1\ra+e_2)$, $|S|=2n-2+k$ and  $0\notin \Sigma_{\leq 2n-1-k}(S)$, allowing us to apply Lemma \ref{lem-genPropB} to $S$ to conclude that there is a basis $(f_1,f_2)$ for $G$ such that $S=f_1^{[n-1]}\bdot f_2^{[n-1]}\bdot (f_1+f_2)^{[k]}$. Since every term of $S\bdot g$ has multiplicity $n-1$, it follows that $g=f_1+f_2$, and the desired conclusion follows.
\end{proof}

\section{The Main Proof}

We divide the proof of Theorem \ref{thm-mult} into two main cases depending on the value of $k_n\in [0,n-1]$.  We begin first with the case when $k_n\in [0,1]$.

\begin{proposition}\label{prop-main-k_n-01}
Let $m,\,n\geq 2$ and let $k\in [0,mn-1]$ with   $k=k_mn+k_n$, where  $k_m\in [0,m-1]$ and $k_n\in [0,1]$. Suppose either Conjecture \ref{conj-shortzs} holds for $k_m$ in $C_m\oplus C_m$, or else $k_n=1$, $k_m\in [1,m-2]$ and Conjecture \ref{conj-shortzs} holds for $k_m+1$ in $C_m\oplus C_m$. Then Conjecture \ref{conj-shortzs} holds for $k$ in $C_{mn}\oplus C_{mn}$.
\end{proposition}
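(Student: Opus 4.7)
The plan is to pass to the quotient: set $H=nG\cong C_m\oplus C_m$ with canonical projection $\pi\colon G\to G/H\cong C_n\oplus C_n$. Given an extremal $S\in \Fc(G)$ with $|S|=2mn-2+k$ and $0\notin \Sigma_{\leq 2mn-1-k}(S)$, one extracts a maximal collection of pairwise disjoint length-$n$ subsequences $T_1,\ldots,T_t$ of $S$ with each $\pi(T_i)$ a zero-sum in $G/H$ (equivalently $\sigma(T_i)\in H$), sets $R=S\bdot T_1^{[-1]}\bdot\ldots\bdot T_t^{[-1]}$, and defines $W=\sigma(T_1)\bdot\ldots\bdot\sigma(T_t)\in \Fc(H)$. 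The aim is to apply Conjecture \ref{conj-shortzs} to $W$ (or to $W\bdot \sigma(R)$) in $C_m\oplus C_m$ and to $\pi(R)$ in $C_n\oplus C_n$, and then lift the resulting structures back to $G$.

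The critical numerical step is pinning down $t$. Any zero-sum of $W$ of length $s$ lifts to a zero-sum of $S$ of length $sn$, which by extremality has length at least $2mn-k$; a short computation (using $k_n\in[0,1]$) forces $s\geq 2m-k_m$, so $0\notin \Sigma_{\leq 2m-1-k_m}(W)$, and combined with the Wang-Zhao formula $\mathsf s_{\leq 2m-1-k_m}(C_m\oplus C_m)=2m-1+k_m$ this gives $t\leq 2m-2+k_m$. The matching lower bound $t\geq 2m-2+k_m$ will use maximality (whereby $\pi(R)$ has no length-$n$ zero-sum) together with $\eta(C_n\oplus C_n)=3n-2$, and closing the remaining gap is expected to require the refined information in Theorems \ref{thm-hamconj} and \ref{thm-fixedprop}. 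In the \emph{generic} case this yields $t=2m-2+k_m$ and $|\pi(R)|=2n-2+k_n$, with $W$ extremal for $k_m$ and $\pi(R)$ extremal for $k_n$ (Option A of the hypothesis); in the \emph{exceptional} case, which can occur only when $k_n=1$, the sequence $\pi(R)$ is forced to be a minimal zero-sum of length $2n-1$, so $\sigma(R)$ lies in $H$ and can be adjoined to yield $W'=W\bdot \sigma(R)$ of length $2m-1+k_m$, extremal for $k_m+1$ (Option B, requiring $k_m\in[1,m-2]$). Navigating the boundary between the generic and exceptional cases and matching them to the two branches of the hypothesis is expected to be the main obstacle.

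With $W$ (or $W'$) and $\pi(R)$ satisfying their respective structural descriptions relative to a basis $(f_1,f_2)$ of $H$ and a basis $(g_1,g_2)$ of $G/H$, the final step is to lift $(g_1,g_2)$ to a basis $(e_1,e_2)$ of $G$ such that $ne_i$ agrees (up to relabeling) with the $f_i$ used in the $W$-structure, and then to read off the structure of $S$ from the structures of the individual $T_i$-blocks combined with the structure of $R$. Lemma \ref{lem-genPropB} will be used to promote a Property-A-style conclusion to the full $e_1^{[mn-1]}\bdot e_2^{[mn-1]}\bdot (e_1+e_2)^{[k]}$ form when $k\in[2,mn-2]$, while the perturbation lemmas \ref{lem-perturb-solo}, \ref{lem-k-to-k+1}, and \ref{lem-k+1-to-k} will handle the boundary cases to ensure the predicted structure holds for all $k\in[0,mn-1]$ with $k_n\in[0,1]$.
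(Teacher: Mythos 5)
Your overall strategy is the same as the paper's: project onto a quotient isomorphic to $C_n\oplus C_n$, extract a block decomposition into length-$n$ pieces whose projections are zero-sum, form the sequence of block sums in $\ker\varphi\cong C_m\oplus C_m$, verify it is extremal for $k_m$ (or, in the second branch of the hypothesis, adjoin $\sigma(R)$ and use $k_m+1$ together with Lemma \ref{lem-k+1-to-k}), and your accounting of lengths and of the two hypothesis branches is correct. However, there is a genuine gap: the step where you propose to ``read off the structure of $S$ from the structures of the individual $T_i$-blocks combined with the structure of $R$'' is not available, because knowing the structure of $\pi(R)$ in $C_n\oplus C_n$ and of $W$ in $C_m\oplus C_m$ determines each term of $S$ only modulo $\ker\varphi$, and determines each block $T_i$ only through its image $\pi(T_i)$ and its sum $\sigma(T_i)$. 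Two terms of $S$ with the same projection could a priori differ by a nonzero element of $\ker\varphi$, and the block sums $\sigma(T_i)$ carry no information about how such discrepancies are distributed inside a block. This is precisely where the bulk of the paper's proof lives: one must show every term of $S$ is ``good'' (distinct terms with equal projections coincide), and this is achieved by a perturbation mechanism --- swap a term $g$ of one block with a term $h$ of another block having $\varphi(g)=\varphi(h)$, observe that the new block-sum sequence is again extremal in $C_m\oplus C_m$, and invoke the rigidity statement (Lemma \ref{lem-perturb-solo}) to force $g=h$. Your proposal cites Lemma \ref{lem-perturb-solo} only for ``boundary cases,'' but it is in fact the engine of the entire lifting argument, and substantial further work (the analogues of the paper's Claims G--I, including a separate treatment of $n=2$) is needed to push from ``all terms are good'' to $\supp(S)\subseteq\{e_1\}\cup(\la e_1\ra+e_2)$ before Lemma \ref{lem-genPropB} can be applied.

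A second, smaller omission: for $k_n\in[0,1]$ the projected sequence $\varphi(S^*)$ has length $(2m+k_m)n-1$ with no zero-sum of length at most $n-1$, so Theorem \ref{thm-fixedprop} gives \emph{two} possible global structures, one of which is the exceptional form $f_1^{[an]}\bdot f_2^{[bn-1]}\bdot (xf_1+f_2)^{[cn-1]}\bdot(xf_1+2f_2)$ with $x\in[2,n-2]$. This case is not covered by a Property-A-style conclusion and must be explicitly eliminated (the paper's Claim E does this with a further swapping argument producing a forbidden zero-sum $e_1^{[n]}$). Your plan uses Theorem \ref{thm-fixedprop} only to help with counting blocks, where it is not actually needed, and does not confront its exceptional branch where it is needed.
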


\begin{proof}
As remarked in the introduction, Conjecture \ref{conj-shortzs}  holds for $k\leq 1$ or $k=mn-1$ in every group $C_{mn}\oplus C_{mn}$. Therefore we can assume $k_m\in [1,m-1]$ and   $k=k_mn+k_n\in [2,mn-2]$.
 Let $G=C_{mn}\oplus C_{mn}$ and let $S\in \Fc(G)$ be a sequence with \be\label{start-hyp}|S|=2nm-2+k\quad\und\quad 0\notin \Sigma_{\leq 2nm-1-k}(S).\ee We need to show Conjecture \ref{conj-shortzs}.3 holds for $S$. Let $\varphi:G\rightarrow G$ be the multiplication by $m$ homomorphism, so $\varphi(x)=mx$. Note $$\varphi(G)=mG\cong C_n\oplus C_n\quad\und\quad \ker \varphi=nG\cong C_m\oplus C_m.$$

If $k_n=1$, set $S^*=S$.  If $k_n=0$, we can choose any element $g_0\in -\sigma(S)+\ker \varphi$ and set $S^*=S\bdot g_0$. When $k_n=0$, the definition of $g_0$ ensures that $\varphi(S^*)$ is zero-sum. When $k_n=1$, we will shortly see below in Claim A that $\varphi(S^*)$ is also zero-sum. Note, in all cases, $$|S^*|=2mn-1+k_mn.$$
Define a \emph{block decomposition} of $S^*$ to be a factorization $$S^*=W_0\bdot W_1\bdot\ldots\bdot W_{2m-2+k_m}$$ with $1\leq |W_i|\leq n$ and $\varphi(W_i)$ zero-sum for each $i\in [1,2m-2+k_m]$. Since $\mathsf s_{\leq n}(\varphi(G))=\mathsf s_{\leq n}(C_n\oplus C_n)=3n-2$ and $|S|\geq (2m-3+k_m)n+3n-2$, it follows by repeated application of the definition of $\mathsf s_{\leq n}(\varphi(G))$ that $S^*$ has a block decomposition, and one  with $g_0\in\supp(W_0)$ when  $k_n=0$.

 \subsection*{Claim A} If $S^*=W_0\bdot W_1\bdot\ldots\bdot W_{2m-2+k_m}$ is a block decomposition, then $|W_i|=n$ for all $i\in [1,2m-2+k_m]$, \ $\varphi(W_0)$ is a minimal zero-sum sequence of length  $|W_0|=2n-1$, and $0\notin \Sigma_{\leq n-1}(\varphi(S^*))$.

 \begin{proof}
Suppose $k_n=1$, so $S^*=S$.
Let us show that $0\notin \Sigma_{\leq 2n-2}(\varphi(W_0))$. Assuming this fails, there is  a nontrivial subsequence $W'_0\mid W_0$ with $|W'_0|\leq 2n-2$ and $\varphi(W'_0)$ zero-sum. Set $W'_i=W_i$ for $i\in [1,2m-2+k_m]$. Then $S_\sigma=\sigma(W'_0)\bdot \sigma(W'_1)\bdot\ldots\bdot \sigma(W'_{2m-2+k_m})$ is a sequence of terms from $\ker\varphi\cong C_m\oplus C_m$ with $|S_\sigma|=2m-1+k_m$. Since $\mathsf s_{\leq 2m-1-k_m}(C_m\oplus C_m)=2m-1+k_m$, it follows that $S_\sigma$ has a nontrivial zero-sum subsequence of length at most $2m-1-k_m$, say $\prod^\bullet_{i\in I}\sigma(W'_i)$ for some nonempty subset $I\subseteq [0,2m-2+k_m]$ with $|I|\leq 2m-1-k_m$.
But now $\prod_{i\in I}^\bullet W'_i$ is a nontrivial  zero-sum subsequence of $S^*=S$ with length $$\Summ{i\in I}|W'_i|\leq \max\{|W'_0|,\,n\}+(|I|-1)n\leq 2n-2+(2m-2-k_m)n=2mn-1-k,$$ contradicting \eqref{start-hyp}.
This show that $0\notin \Sigma_{2n-2}(\varphi(W_0))$.
As a result, $$|W_0|\leq \mathsf s_{\leq 2n-2}(\varphi(G))-1=\mathsf s_{\leq 2n-2}(C_n\oplus C_n)-1=2n-1.$$

Suppose $k_n=0$. Then $S^*=S\bdot g_0$ with  $\varphi(S\bdot g_0)$ zero-sum  by definition of $g_0$.  Hence $\varphi(W_0)$ is also a zero-sum sequence. Let us show that $\varphi(W_0)$ is a minimal zero-sum sequence. Assuming this fails, then $W_0$ contains disjoint, nontrivial subsequences $W_{2m-1+k_m}\bdot W_{2m+k_m}\mid W_0$ with $|W_{2m-1+k_m}|+|W_{2m+k_m}|\leq n+2n-1$  and $\varphi(W_{2m-1+k_m})$ and $\varphi(W_{2m+k_m})$ both zero-sum (if $|W_0|\leq 3n-1$, this is trivial in view of $\varphi(W_0)$ not being a minimal zero-sum, while the same conclusion follows from  $\mathsf D(\varphi(G))=2n-1$ and $\mathsf s_{\leq n}(\varphi(G))=3n-2$ when $|W_0|\geq 3n-1$).
By passing to appropriate zero-sum subsequences,  we can then further assume $\varphi(W_{2m-1+k_m})$ and $\varphi( W_{2m+k_m})$ are each minimal zero-sum subsequences, so that $|W_i|\leq \mathsf D(\varphi(G))=2n-1$ for both $i\in \{2m-1+k_m, 2m-k_m\}$.
As at most one of the sequences $W_j$ can contain the term $g_0$, it follows that $\prod_{i\in [1,2m+k_m]\setminus \{j\}}^\bullet W_j\mid S$ for some $j\in [1,2m+k_m]$.
Now $\sigma(W_1)\bdot\ldots\bdot \sigma(W_{2m+k_m})\bdot \sigma(W_{j})^{[-1]}$ is a sequence of terms from  $\ker \varphi\cong C_m\oplus C_m$ with length $\mathsf s_{\leq 2m-1-k_m}(C_m\oplus C_m)=2m-1+k_m$.
It follows that there is a zero-sum subsequence  $\prod^\bullet_{i\in I}\sigma(W_i)$ for some $I\subseteq [1,2m+k_m]\setminus \{j\}$ with $1\leq |I|\leq 2m-1-k_m$.
In such case,  if $|I|\geq 2$, then $T=\prod^\bullet_{i\in I}W_i$ is a zero-sum subsequence of $S$ with length \begin{align*}|T|&\leq (|I|-2)n+\max\{2n,n+|W_{2m-1+k_m}|, n+|W_{2m+k_m}|,|W_{2m-1+k_m}|+|W_{2m+k_m}|\}\\&\leq (|I|-2)n+3n-1
\leq (2m-3-k_m)n+3n-1=2mn-1+k,\end{align*} contrary to \eqref{start-hyp}.
On the other hand, if $|I|=1$, then $T=\prod^\bullet_{i\in I}W_i$ is a zero-sum subsequence of $S$ with length $|T|\leq \max\{n,|W_{2m-1+k_m}|,|W_{2m+k_m}|\}\leq 2n-1\leq 2mn-1-k$, also contradicting \eqref{start-hyp}.
This shows that $\varphi(W_0)$ must be a minimal zero-sum sequence. In particular, $$|W_0|\leq \mathsf D(C_n\oplus C_n)=2n-1.$$

Regardless of whether $k_n=0$ or $1$, we have shown that $|W_0|\leq 2n-1$.
As a result, since $|W_i|\leq n$ for all $i\in [1,2m-2+k_m]$, we have
$$2n-1=2mn-1+k_mn-(2m-2+k_m)n\leq |S^*|-\Sum{i=1}{2m-2+k_m}|W_i|=|W_0|\leq 2n-1,$$ forcing equality to hold in these estimates, i.e., $|W_i|=n$ for all $i\in [1,2m-2+k_m]$ and $|W_0|=2n-1$. If $k_n=0$, we have already shown that $\varphi(W_0)$ is a minimal zero-sum sequence. For $k_n=1$, we established that $0\notin \Sigma_{2n-2}(\varphi(W_0))$, which combined with  $\mathsf D(\varphi(G))=\mathsf D(C_n\oplus C_n)=2n-1$ forces $\varphi(W_0)$ to be  a minimal zero-sum sequence in this case as well.
If $0\in \Sigma_{\leq n-1}(\varphi(S^*))$, then  there is a nontrivial subsequence $W'_1\mid S^*$ with $1\leq |W'_1|\leq n-1$ and $\varphi(W'_1)$ zero-sum. Then, by the argument showing that $S^*$ has some block decomposition, we can find a block decomposition $S^*=W'_0\bdot W'_1\bdot\ldots \bdot W'_{2m-2+k_m}$ with $|W'_1|<n$, contrary to what was just established for an arbitrary block decomposition. Thus $0\notin \Sigma_{\leq n-1}(\varphi(S^*))$,
and all parts of Claim A are established.
 \end{proof}

Suppose \be\label{weak-deff}S^*= W_0\bdot\ldots\bdot W_{2m-2+k_m}\ee  with each $\varphi(W_i)$ a nontrivial zero-sum for $i\in [0,2m-2+k_m]$. We call this a \emph{weak block decomposition} of $S^*$. In view of Claim A, we have $|W_i|\geq n$ for all $\in [0,2m-2+k_m]$, and since $|S^*|=2mn-1-k_mn>(2m-1-k_m)n$, we cannot have $|W_i|=n$ for all $i\in [0,2m-2+k_m]$.
$$\mbox{Let $k_\emptyset\in [0,2m-2+k_m]$ be an index with }\quad  \left\{
               \begin{array}{ll}
                |W_{k_\emptyset}|>n & \hbox{if $k_n=1$,} \\
                 g_0\in \supp(W_0) & \hbox{if $k_n=0$.}
               \end{array}
             \right.$$
 Then define
$$S_\sigma=\sigma(W_0)\bdot\ldots\bdot \sigma(W_{2m-2+k_m})\bdot \sigma(W_{k_\emptyset})^{[-1]}\in \Fc(\ker \varphi).$$
We call $k_\emptyset$ and $S_\sigma$ the \emph{associated} index and sequence for the block decomposition.
For $j\in [0,2m-2+k_m]$, set $$\wtilde W_j=\left\{
                    \begin{array}{ll}
                      W_j\bdot g_0^{[-1]} & \hbox{ if $k_n=0$ and $j=k_\emptyset$;} \\
                      W_j & \hbox{otherwise.}
                    \end{array}
                  \right.$$
In view of Claim A, any block decomposition is also a weak block decomposition. If $S^*=W_0\bdot W_1\bdot\ldots\bdot W_{2m-2+k_m}$ is a block decomposition and $k_n=1$, then $k_\emptyset=0$ is forced as $|W_i|=n$ for all $i\geq 1$. On the other hand, if $k_n=0$, then there is a block decomposition with $g_0\in \supp(W_0)$ as remarked earlier, and thus with $k_\emptyset=0$.

\subsection*{Claim B} Suppose $S^*= W_0\bdot\ldots\bdot W_{2m-2+k_m}$ is a weak block decomposition with associated index  $k_\emptyset$ and associated sequence $S_\sigma$.  Then $|S_\sigma|=2m-2+k_m$ with $0\notin \Sigma_{\leq 2m-1-k_m}(S_\sigma)$.
 Moreover, if we also have $k_n=1$, then $|\sigma(W_{k_\emptyset})\bdot S_\sigma|=2m-1+k_m$ with $0\notin \Sigma_{\leq 2m-2-k_m}(\sigma(W_{k_\emptyset})\bdot S_\sigma)$. Regardless of whether $k_n=0$ or $1$, Conjecture \ref{conj-shortzs} holds for  $S_\sigma$.

\begin{proof} We have $|S_\sigma|=2m-2+k_m$ by definition. Assume by contradiction $0\in \Sigma_{\leq 2m-1-k_m}(S_\sigma)$. Then  there is a  zero-sum subsequence $\prod_{i\in I}^\bullet\sigma(W_i)$ for some $I\subseteq [0,2m-2+k_m]\setminus \{k_\emptyset\}$ with $1\leq |I|\leq 2m-1-k_m$. By Claim A, we have $0\notin \Sigma_{\leq n-1}(\varphi(S^*))$, which ensures $|W_i|\geq n$ for all $i\in [0,2m-2+k_m]\setminus I$.
 Since $k_\emptyset\notin I$, we have $|W_{k_\emptyset}|\geq n+k_n$ with $k_\emptyset\in [0,2m-2+k_m]\setminus I$ (by definition of $k_\emptyset$).
 It follows that $T:=\prod_{i\in I}^\bullet W_i$ is a nontrivial zero-sum subsequence of $S$ with length $|T|=|S^*|-\Summ{i\in [0,2m-2+k_m]\setminus I}|W_i|\leq |S^*|-(2m-1+k_m-|I|)n-k_n\leq |S^*|-2k_mn-k_n=2mn-1-k$, contradicting \eqref{start-hyp}. So we instead conclude that $|S_\sigma|=2m-2+k_m$ and $0\notin \Sigma_{\leq 2m-1-k_m}(S_\sigma)$.

 Suppose $k_n=1$, so that $S^*=S$. Assume by contradiction that $0\in \Sigma_{\leq 2m-2-k_m}(\sigma(W_{k_\emptyset})\bdot S_\sigma)$. Then  there is a  zero-sum subsequence $\prod_{i\in I}^\bullet\sigma(W_i)$ for some $I\subseteq [0,2m-2+k_m]$ with $1\leq |I|\leq 2m-2-k_m$. By Claim A, we have $0\notin \Sigma_{\leq n-1}(\varphi(S^*))$, which ensures $|W_i|\geq n$ for all $i\in [0,2m-2+k_m]\setminus I$.
 Hence  $T:=\prod_{i\in I}^\bullet W_i$ is a nontrivial zero-sum subsequence of $S$ with length \begin{align*}|T|&=|S^*|-\Summ{i\in [0,2m-2+k_m]\setminus I}|W_i|\leq |S^*|-(2m-1+k_m-|I|)n\leq |S^*|-(2k_m+1)n\\&=2mn-1-k_mn-n\leq 2mn-1-k,\end{align*} contradicting \eqref{start-hyp}.
 So we instead conclude that we have  $|\sigma(W_{k_\emptyset})\bdot S_\sigma|=2m-1+k_m$ and $0\notin \Sigma_{\leq 2m-2-k_m}(\sigma(W_{k_\emptyset})\bdot S_\sigma)$.

If Conjecture \ref{conj-shortzs} holds for $k_m$ in $C_m\oplus C_m$, then the first part of Claim B ensures that Conjecture \ref{conj-shortzs} holds for $S_\sigma$. Otherwise, the hypotheses of  Proposition \ref{prop-main-k_n-01} ensure that $k_n=1$ and  $k_m\in [1,m-2]$ with Conjecture \ref{conj-shortzs} holding for $k_m+1$ in $C_m\oplus C_m$. In such case, the second part of Claim B ensures that Conjecture \ref{conj-shortzs} holds for $\sigma(W_{k_\emptyset})\bdot S_\sigma$, and then applying Lemma \ref{lem-k+1-to-k} shows that Conjecture \ref{conj-shortzs} holds for $S_\sigma$.
\end{proof}



\subsection*{Claim C} There exists a  basis $(f_1,f_2)$ for $\varphi(G)=C_n\oplus C_n$ such that either
 \begin{itemize}
 \item[1.] $\supp(\varphi(S^*))\subseteq f_1\cup \big(\la f_1\ra+f_2\big)$, or
 \item[2.]
 $\varphi(S^*)=f_1^{[an]}\bdot f_2^{[bn-1]}\bdot f_3^{[cn-1]}\bdot (f_2+f_3)$, where $f_3=xf_1+f_2$ for some  $x\in [2,n-2]$ with $\gcd(x,n)=1$, $a,b,c\geq 1$ and $a+b+c=2m+k_m$.
 \end{itemize}

\begin{proof}
By Claim A, we have $0\notin \Sigma_{\leq n-1}(\varphi(S^*))$, while $|\varphi(S^*)|=|S^*|=(2m+k_m)n-1$. Thus Claim C follows from Theorem \ref{thm-fixedprop}.
\end{proof}

We define a term $g\in \supp(S)$ to be \emph{good} if $g,\,h\in \supp(S)$ with $\varphi(g)=\varphi(h)$ implies $g=h$.
 A term $g\in \supp(\varphi(S))$ is \emph{good} if $\supp(S)$ contains exactly one element from $\varphi^{-1}(g)$. Then, for $g\in \supp(S)$, we find that  $\varphi(g)=mg$ is good if and only if $g$ is good.

\subsection*{Claim D} Suppose $S^*=W_0\bdot W_1\bdot\ldots\bdot W_{2m-2+k_m}$ is a weak block decomposition. If   $g\in \supp( W_j)$, \ $h\in \supp(\wtilde W_{k_\emptyset})$ and $\varphi(g)=\varphi(h)$, where $j,\,k_\emptyset\in [0,2m-2+k_m]$ are \emph{distinct}, then $g=h$ is good.

  \begin{proof}
Since $\varphi(g)=\varphi(h)$, setting $W'_j=W_j\bdot g^{[-1]}\bdot h$, $W'_{k_\emptyset}=W_{k_\emptyset}\bdot h^{[-1]}\bdot g$, and $W'_i=W_i$ for all $i\neq j,k$, we obtain a new weak block decomposition  $S^*=W'_0\bdot W'_1\bdot \ldots\bdot W'_{2m-2+k_m}$.  Since $h\in \supp(\wtilde W_{k_\emptyset})$, we have $g_0\in \supp(W'_{k_\emptyset})$ for $k_n=0$, and $|W'_{k_\emptyset}|=|W_{k_\emptyset}|>n$ for $k_n=1$. Consequently, if we let $S_\sigma$ and $S'_\sigma$ be the associated sequences for the original and new block decompositions, with $k_\emptyset$ and $k'_
\emptyset$ the associated indices, we find that $k_\emptyset=k'_\emptyset$ with  $S'_{\sigma}$ obtained from $S_\sigma$ by replacing the term $\sigma(W_j)$ by the term $\sigma(W'_j)=\sigma(W_j)-g+h$. In view of Claim B, it follows that Conjecture \ref{conj-shortzs} holds for both sequences $S'_{\sigma}$ and $S_\sigma$ using $k_m\in [1,m-1]$ modulo $m$. Thus Lemma \ref{lem-perturb-solo} implies that $g=h$.
Repeating this argument for an arbitrary $g'\in \supp(S\bdot W_{k_\emptyset}^{[-1]})$ using the fixed $h\in \supp(\wtilde W_{k_\emptyset})$, we conclude that $g'=h=g$ for all $g'\in \supp(S\bdot W_{k_\emptyset}^{[-1]})$ with $\varphi(g')=\varphi(g)=\varphi(h)$.
Likewise, repeating the argument for an arbitrary $h'\in\supp(\wtilde W_{k_\emptyset})$ using the fixed $g\in  \supp(W_j)$, we find $h'=g=h$ for all $h'\in \supp(\wtilde W_{k_\emptyset})$ with $\varphi(h')=\varphi(g)=\varphi(h)$. It follows that $g=h$ is good.
  \end{proof}

\subsection*{Claim E} Claim C.1 holds for $S^*$.

\begin{proof}
Assume instead that Claim C.2 holds.
Let $(f_1,f_2)$ be a basis for which Claim C.2 holds and let $x^*\in [2,n-2]$ be the multiplicative inverse of $-x$ modulo $n$, so $$x^*x\equiv -1\mod n.$$
In view of Claim C.2, there is a block decomposition $S^*=W_0\bdot W_1\bdot\ldots \bdot W_{2m-2+k_m}$ with $\varphi(W_0)=f_1^{[n-1]}\bdot f_2^{[x^*]}\bdot f_3^{[n-x^*]}$, $\varphi(W_1)=f_3^{[x^*-1]}\bdot f_2^{[n-x^*-1]}\bdot f_1\bdot (f_2+f_3)$ and $\varphi(W_i)\in \{f_1^{[n]}, f_2^{[n]}, f_3^{[n]}\}$ for all $i\in [2,2m-2+k_m]$. We call any such block decomposition a \emph{strong} block decomposition of $S^*$.
For $k_n=0$, it can also be assumed that either $g_0\in \supp(W_0)$ or else $g_0\in \supp(W_1)$ and $\varphi(g_0)=f_2+f_3$, since  $\supp(\varphi(W_0))$ contains every element from $\supp(\varphi(S^*))$ apart from the unique term equal to $f_2+f_3$ which is contained in $\varphi(W_1)$.

Let us first show all $g\in \supp(S)$ are good.
Since Claim C.2 holds, we have $n\geq 5$ (as $x\in [2,n-2]$ with $\gcd(x,n)=1$) and there is a unique term $g$ of $S^*$ with $\varphi (g)=f_2+f_3$, which is  trivially good if $g\in \supp(S)$. Consider $f\in \{f_1,f_2,f_1+f_2\}$ and let $S^*=W_0\bdot W_1\bdot\ldots\bdot W_{2m-2+k_m}$ be a strong block decomposition, and if $k_n=0$, assume  either $g_0\in \supp(W_0)$ or else $g_0\in \supp(W_1)$ and $\varphi(g_0)=f_2+f_3$.
Since $\vp_{f}(\varphi(W_0))\geq 2$, there is some $h\in \supp(\wtilde W_0)$ with $\varphi(h)=f$. Since $\vp_f(\varphi(W_1))\geq 1$ with either $g_0\in \supp(W_0)$ or $\varphi(g_0)=f_2+f_3$,   there is some $g\in \supp(\wtilde W_1)$ with $\varphi(g)=f$. If $k_n=1$, then $k_\emptyset=0$, and if $k_n=0$, then $k_\emptyset\in \{0,1\}$ (as $g_0\in \supp(W_0\bdot W_1)$). Thus applying Claim D shows that $\varphi(g)=f$ is good, as claimed.

Now, since all $g\in \supp(S)$ are good, an appropriate choice of pre-images for the elements $f_1$ and $f_2$ yields  $\supp(S)\subseteq \{e_1,e_2,e_3+\alpha,e_2+e_3+\beta\}$ for some $\alpha,\beta\in \ker \varphi$, where $e_3:=xe_1+e_2$, $\varphi(e_1)=f_1$ and $\varphi(e_2)=f_2$. As before, let $S^*=W_0\bdot W_1\bdot\ldots\bdot W_{2m-2+k_m}$
be a strong block decomposition, and if $k_n=0$, assume  either $g_0\in \supp(W_0)$ or else  $g_0\in \supp(W_1)$ and $\varphi(g_0)=f_2+f_3$.
By choosing $g_0\in g_0+\ker \varphi$ appropriately, we can assume $g_0\in \{e_1,e_2,e_3+\alpha,e_2+e_3+\beta\}$ for $k_n=0$ as well.
Since $x,\,x^*\in [2,n-2]$, we have  $\vp_{e_1}(W_0)=n-1>x$ and $\vp_{e_2}(W_0)=x^*>1$, whence $e_1^{[x]}\bdot e_2\mid \wtilde W_0$ and $e_3+\alpha\mid \wtilde W_1$. Thus, setting $W'_0=W_0\bdot (e_1^{[x]}\bdot e_2)^{[-1]}\bdot (e_3+\alpha)$, $W'_1=W_1\bdot (e_3+\alpha)^{[-1]}\bdot e_1^{[x]}\bdot e_2$ and $W'_i=W_i$ for $i\geq 2$, we obtain a weak block decomposition $S^*=W'_0\bdot W'_1\bdot\ldots\bdot W'_{2m-2+k_m}$ say  with associated index $k'_\emptyset$ and associated sequence $S'_\sigma$.
Since $e_1^{[x]}\bdot e_2\mid \wtilde W_0$ and $e_3+\alpha\mid \wtilde W_1$, we have $k'_\emptyset=k_\emptyset\in \{0,1\}$ when $k_n=0$, while $|W'_0|=2n-1-x\geq n+1$ ensures that $k'_\emptyset=k_\emptyset=0$ for $k_n=1$.
As a result,
Claim B and Lemma \ref{lem-perturb-solo}  imply $S_\sigma=S'_\sigma$ with $\alpha=0$. Similarly, since $x,\,x^*\in [2,n-2]$,
 we have  $\vp_{e_1}(W_0)=n-1>n-x$ and $\vp_{e_3}(W_0)=n-x^*>1$, whence $e_1^{[n-x]}\bdot e_3\mid \wtilde W_0$ and $e_2\mid \wtilde W_1$.
Setting $W'_0=W_0\bdot (e_1^{[n-x]}\bdot e_3)^{[-1]}\bdot e_2$, $W'_1=W_1\bdot e_2^{[-1]}\bdot e_1^{[n-x]}\bdot e_3$ and $W'_i=W_i$ for $i\geq 2$, we obtain a weak block decomposition $S^*=W'_0\bdot W'_1\bdot\ldots\bdot W'_{2m-2+k_m}$ with $|W'_0|=2n-1-(n-x)\geq n+1$ having associated index $k'_\emptyset=k_\emptyset$ and associated sequence $S'_\sigma$. Thus  Claim B and Lemma \ref{lem-perturb-solo} yield $S_\sigma=S'_\sigma$ and  $ne_1=(n-x)e_1+e_3-e_2=0$. We thus obtain a zero-sum subsequence $e_1^{[n]}\mid S$, contradicting that  $0\notin\Sigma_{\leq n}(S)$ (which holds by \eqref{start-hyp}), unless $a=1$, $k_n=0$ and  $\varphi(g_0)=f_1$.
However, in such case, there are $|S^*|-n-1=(2m+k_m-1)n-2\geq 2mn-2$ terms of $S$ equal to either $e_2$ or $e_3$, so that the pigeonhole principle yields that either $e_2$ or $e_3$ has multiplicity at least $mn-1$ in $S$. If either has multiplicity at least $mn$, then $e_2^{[mn]}$ or $e_3^{[mn]}$ is a zero-sum subsequence of length $mn$, contradicting that  $0\notin \Sigma_{\leq mn}(S)$ by \eqref{start-hyp}. On the other hand, if both $e_2$ and $e_3=xe_1+e_2$ have multiplicity $mn-1\geq n$ (as $m\geq 2$), then $e_2^{[mn-n]}\bdot e_3^{[n]}$ is a zero-sum subsequence of length $mn$ (as $ne_1=0$), again contradicting that $0\notin \Sigma_{\leq mn}(S)$.
\end{proof}

In view of Claim E, we now assume Claim C.1 holds for the remainder of the proof, say with basis $(f_1,f_2)$. Then $\supp(\varphi(S^*))\subseteq f_1\cup \big(\la f_1\ra+f_2\big)$, implying  that the number of terms from $\la f_1\ra+f_2$ in any zero-sum subsequence of $\varphi(S)$ must be congruent to $0$ modulo $n$. As a result, if $S^*=W_0\bdot W_1\bdot \ldots\bdot W_{2m-2+k_m}$ is any block decomposition, then  since $|W_i|=n$ for $i\geq 1$ and $|W_i|=2n-1$, we find that \be\label{struct}\varphi(W_0)=f_1^{[n-1]}\bdot {\prod}^\bullet_{i\in [1,n]}(x_if_1+f_2)\;\und\;\varphi(W_i)\in \{f_1^{[n]}, {\prod}^\bullet_{i\in [1,n]}(c_if_1+f_2)\}\;\mbox{ for $i\geq 1$},\ee where  $x_1,\ldots,x_n,c_1,\ldots,c_n\in [0,n-1]$ with $c_1+\ldots+c_n\equiv 0 \mod n$ and  $x_1+\ldots+x_n\equiv 1\mod n$.
Note, if  $(f_1,f_2)$ is a basis for which Claim C.1 holds, then so is $(f_1,xf_1+f_2)$ for any $x\in \Z$.

\subsection*{Claim F} If $n=2$, then all terms $x\in \supp(S)$ are good.

  \begin{proof}
Let $S^*=W_0\bdot W_1\bdot\ldots\bdot W_{2m-2+k_m}$ be a block decomposition. Then $\varphi(W_0)$ is a minimal zero-sum  sequence of length $2n-1=3$ by Claim A, so $\varphi(W_0)=f_1\bdot f_2\bdot (f_1+f_2)$ with $f_1$, $f_2$ and $f_1+f_2$ the three nonzero elements of $\varphi(G)\cong C_n\oplus C_n=C_2\oplus C_2$.  By Claim C.1, we have $\supp(\varphi(S))\subseteq \{f_1,f_2,f_1+f_2\}=\varphi(G)\setminus \{0\}$, and $|\supp(\varphi(W_i))|=1$ for $i\in [1,2m-2+k_m]$, allowing us to assume $g_0\in \supp(W_0)$ when $k_n=0$, and by choosing $f_1$ and $f_2$ appropriately, we can w.l.o.g. assume $\varphi(g_0)=f_1+f_2$.
If $\vp_{f_1}(S)=1$, then the definition of good holds trivially for $f_1$. Otherwise, there is some $W_j$ with $f_1\in \supp(W_j)$ and $j\geq 1$, in which case Claim
D, implies that $f_1$ is good. Thus $f_1$ is good, and the same argument  shows that $f_2$ is good. If $k_n=1$, the argument also shows  $f_1+f_2$ is good.
The proof of the claim is now complete unless $\vp_{f_1+f_2}(\varphi(S))\geq 2$ with $k_n=0$ and $S^*=S\bdot g_0$, which we now assume. Note $\varphi(g_0)=f_1+f_2$, so $W'_1=(f_1+f_2)\bdot \varphi(g_0)$ is a length two zero-sum dividing $\varphi(S\bdot g_0)$. Applying the argument showing the existence of a block decomposition, it follows that there is a block decomposition $W'_0\bdot W'_1\bdot\ldots\bdot W'_{2m-2+k_m}$ of $S^*$ with $\varphi(W'_1)=(f_1+f_2)\bdot \varphi(g_0)$ and $k_\emptyset=1$. Since $\vp_{f_1+f_2}(\varphi(S))\geq 2$, it follows that there is some $j\in [0,2m-2+k_m]\setminus \{1\}$ with $f_1+f_2\in \supp(\varphi(W_j))$, and now Claim D, applied to the block decomposition $W'_0\bdot W'_1\bdot\ldots\bdot W'_{2m-2+k_m}$, implies that $f_1+f_2$ is good, completing the claim.
\end{proof}

\subsection*{Claim G} Suppose $S^*=W_0\bdot W_1\bdot\ldots\bdot W_{2m-2+k_m}$ is a  block decomposition  with $k_\emptyset =0$. If    $g_1\in \supp(\wtilde W_{j_1})$, \ $g_2\in \supp(\wtilde W_{j_2})$  and $\varphi(g_1)=\varphi(g_2)$, where $j_1,\,j_2\in [0,2m-2+k_m]$ are \emph{distinct}, then $g_1=g_2$ is good.

  \begin{proof} In view of Claim F, we can assume $n\geq 3$.
Since $\vp_{f_1}(\varphi(W_0))=n-1\geq 2$ and $|W_0|-\vp_{f_1}(\varphi(W_0))=n\geq 3$ by \eqref{struct}, we have and can w.l.o.g. assume (by possibly exchanging $f_2$ for an  appropriate alternative from $\la f_1\ra+f_2$) that \be\label{redblue}f_1\in \supp(\varphi(\wtilde W_0))\quad\und\quad f_2\in \supp(\varphi(\wtilde W_0)).\ee
If $j_1=0$ or $j_2=0$, then Claim D  implies $g_1=g_2$ is good (as $k_\emptyset=0$).
 Therefore we can w.l.o.g. assume $j_1=1$ and $j_2=2$.
By Claim C.1, we have $\supp(\varphi(S))\subseteq \{f_1\}\cup \big(\la f_1\ra+f_2\big).$  If $\varphi(g_1)=f_1\in \supp(\varphi(\wtilde W_0))$, then Claim D applied with $k_\emptyset=0$ and $j=1$ implies that $g_1$ is good. Therefore, in view of \eqref{struct}, we can assume \be\label{supptig}\supp(\varphi(W_1))\subseteq \la f_1\ra+f_2\quad\und\quad\supp(\varphi(W_2))\subseteq \la f_1\ra+f_2,\ee with the latter following by an analogous argument. In particular, $$\varphi(g_1)=\varphi(g_2)=x_1f_1+f_2\quad\mbox{
for some $x_1\in [0,n-1]$}.$$

Suppose $k_n=1$. Then $|W_0\bdot W_1\bdot g_1^{[-1]}|=3n-2=\eta(C_n\oplus C_n)$, ensuring by Claim A that $W_0\bdot W_1\bdot g_1^{[-1]}$ contains an $n$-term subsequence $W'_0$ with $\varphi(W'_0)$ zero-sum. Setting $W'_0=W_0\bdot W_1\bdot (W'_1)^{[-1]}$, it follows that $S^*=W'_0\bdot W'_1\bdot W_2\bdot\ldots\bdot W_{2m-2+k_m}$ is a block decomposition with $g_1\in \supp(W'_0)$, \ $g_2\in \supp(W'_2)$ and associated index $k'_\emptyset=0$, in which case Claim D implies that $g_1=g_2$ is good, as desired.

Suppose $k_n=0$. Since $k_\emptyset=0$, we have $g_0\in \supp(W_0)$ with  $|W_0\bdot W_1\bdot g_1^{[-1]}\bdot g_0^{[-1]}|=3n-3$. If $W_0\bdot W_1\bdot g_1^{[-1]}\bdot g_0^{[-1]}$ contains an $n$-term subsequence $W'_1$ with $\varphi(W'_0)$ zero-sum, then setting $W'_0=W_0\bdot W_1\bdot (W'_1)^{[-1]}$, it follows that $S^*=W'_0\bdot W'_1\bdot W_2\bdot\ldots\bdot W_{2m-2+k_m}$ is a block decomposition with $g_0\bdot g_1\mid W'_0$, \ $g_2\in \supp(W_2)$ and associated index $k'_\emptyset=0$, in which case Claim D implies that $g_1=g_2$ is good.
Therefore, in view of Claim A, we can instead assume $0\notin \Sigma_{\leq n}(\varphi(W_0\bdot W_1\bdot g_1^{[-1]}\bdot g_0^{[-1]}))$. We have $f_1,\,f_2\in \supp(\varphi(W_0\bdot g_0^{[-1]}))\subseteq \supp(W'_0)$  by \eqref{redblue}, so  applying the established Conjecture \ref{conj-shortzs}.4  to $\varphi(W_0\bdot W_1\bdot g_1^{[-1]}\bdot g_0^{[-1]})$ yields \be\label{sqware}\varphi(W_0\bdot W_1\bdot g_1^{[-1]}\bdot g_0^{[-1]})=f_1^{[n-1]}\bdot f_2^{[n-1]}\bdot f_3^{[n-1]}\quad\mbox{ for some $f_3=x_3f_1+f_2,$}\ee where
we have $f_3=x_3f_1+f_2$ since  $\supp(\varphi(S))\subseteq \{f_1\}\cup \big(\la f_1\ra+f_2\big)$. Moreover, since $(f_2,f_3)$ must be a basis, it follows that $$\gcd(x_3,n)=1.$$

By \eqref{sqware}, we have  \be\label{e123}\varphi(g_0)+\varphi(g_1)=-\sigma(\varphi(W_0\bdot W_1\bdot g_0^{[-1]}\bdot g_1^{[-1]}))=f_1+f_2+f_3=(1+x_3)f_2+2f_2.\ee 
Observe that $\Sigma_{n-2}(f_2^{[n-2]}\bdot f_3^{[n-2]})=\{xf_1-2f_2:\; x\in {\underbrace{\{0,x_3\}+\ldots+\{0,x_3\}}}_{n-2}\}$. Since $\gcd(x_3,n)=1$, it follows that ${\underbrace{\{0,x_3\}+\ldots+\{0,x_3\}}}_{n-2}$ contains all residue classes modulo $n$ except $(n-1)x_3\equiv -x_3\mod n$. As a result, since $-1-x_3\not\equiv -x_3\mod n$, it follows from \eqref{e123} that $-\varphi(g_0)-\varphi(g_1)\in \Sigma_{n-2}(f_2^{[n-2]}\bdot f_3^{[n-2]})$, which means (recall \eqref{sqware}) that there is an $n$-term subsequence $W'_1\mid W_0\bdot W_1$ with $g_0\bdot g_1\mid W'_1$ and $\varphi(W'_1)$ zero-sum. Letting $W_0'=W_0\bdot W_1\bdot (W'_1)^{[-1]}$, it follows that $S^*=W'_0\bdot W'_1\bdot W_2\ldots\ldots W_{2m-2+k_m}$ is a block decomposition with $g_1\in \supp(W'_1\bdot g_0^{[-1]})$ and $g_2\in \supp(W_2)$. It now follows from Claim D, applied to this block decomposition with $k_\emptyset=1$ and $j=2$, that $g_1=g_2$ is good, as desired.
\end{proof}

\smallskip

\textbf{CASE 1.} $n=2$.

\smallskip

In this case, $\supp(\varphi(S^*))\subseteq \{f_1\}\cup\big(\la f_1\ra+f_2\big)=\{f_1,f_2,f_1+f_2\}$ with $f_1$, $f_2$ and $f_1+f_2$ the three nonzero elements of $\varphi(G)=C_n\oplus C_n=C_2\oplus C_2$. Claim F ensures that all terms of $S$ are good, so (choosing $g_0\in g_0+\ker \varphi$ appropriately when $k_n=0$) we find $$\supp(S^*)=\{e_1,e_2,e_1+e_2+\alpha\}$$ for some $e_1,e_2,\alpha\in G$ with  $$me_1=\varphi(e_1)=f_1,\quad me_2=\varphi(e_2)=f_2 \quad\und\quad m\alpha=\varphi(\alpha)=0.$$ Let $S^*=W_0\bdot\ldots\bdot W_{2m-2+k_m}$ be a block decomposition with $k_\emptyset=0$ having associated sequence $S_\sigma=\prod^\bullet_{i\in [1,2m-2+k_m]}\sigma(W_i)$. In view of \eqref{struct}, we have $W_0=e_1\bdot e_2\bdot (e_1+e_2+\alpha)$ and $W_i\in \{e_1^{[2]}, e_2^{[2]},(e_1+e_2+\alpha)^{[2]}\}$ for $i\geq 1$. Thus each term in $S_\sigma$ is either equal to $2e_1$, $2e_2$ or $2e_1+2e_2+2\alpha$. In view of Claim B, we know Conjecture \ref{conj-shortzs} holds for $S_\sigma$. Since \emph{all} bases for $\varphi(G)
=C_n\oplus C_n$ satisfy Claim C.1 when $n=2$, we can replace the basis $(f_1,f_2)$ by any alternative one.
Thus we can w.l.o.g. assume Conjecture \ref{conj-shortzs} holds for $S_\sigma$ using the basis $(2e_1,2e_2)$ with $\supp(S_\sigma)=\{2e_1,2e_2,2e_1+2e_2+2\alpha\}$.

Suppose $k_m=1<m-1$. Then $m\geq 3$, $k=k_mn+k_n\in \{2,3\}$, and  Conjecture \ref{conj-shortzs} implies that $2e_1$ occurs with multiplicity $m-1$ in $S_\sigma$, \ $2e_2$ occurs with multiplicity $x\geq 1$ in $S_\sigma$, \ $2e_1+2e_2+2\alpha$ occurs with multiplicity $m-x\geq 1$ in $S_\sigma$, and $(2e_1+2e_2+2\alpha)-2e_2\in \la 2e_1\ra$, whence $e_1+e_2+\alpha=ye_1+(1+m)e_2$ or $ye_1+e_2$ for some $y\in [0,2m-1]$. We can assume the latter does not occur, else Lemma \ref{lem-genPropB} yields the desired structure for $S$. Hence,
by swapping the basis $(f_1,f_2)$ with $(f_1,f_1+f_2)$ if need be, we can assume $x\geq m-x$ and   $$S^*=e_1^{[2m-1]}\bdot e_2^{[2x+1]}\bdot (ye_1+(1+m)e_2)^{[2(m-x)+1]}$$ for some $x\in [\frac{m}{2},m-1]$ and $y\in [0,2m-1]$.
If $y=0$, then $e_2^{[m-1]}\bdot (1+m)e_2$ is a zero-sum subsequence of $S$ with length $m$, contrary to \eqref{start-hyp}. Therefore $y\geq 1$.
If $y\geq 2$, or $k_n=1$, or $k_n=0$ with $\varphi(g_0)\neq f_1$, then $e_1^{[2m-y]}\bdot e_2^{[m-1]}\bdot (ye_1+(1+m)e_2)$ is a nontrivial zero-sum subsequence of $S$ with length $3m-y\leq 3m-1\leq 4m-4\leq 2mn-1-k$, contradicting \eqref{start-hyp}. On the other hand, if $y=1$, $k_n=0$ and $\varphi(g_0)=f_1$,  then  $e_1^{[2m-3]}\bdot e_2^{[m-3]}\bdot (e_1+(1+m)e_2)^{[3]}$ is  a nontrivial zero-sum subsequence of $S$ with length $3m-3\leq 4m-4\leq 2mn-1-k$, again contradicting \eqref{start-hyp}. So we can now assume either $k_m\in [2,m-1]$ or $m=2$.

In this case, Conjecture \ref{conj-shortzs} holding for $S_\sigma$ with basis $(2e_1,2e_2)$ means  $2e_1$ and $2e_2$ occur with multiplicity $m-1$ in $S_\sigma$, \  $2e_1+2e_2+2\alpha$ occurs with multiplicity $k_m$ in $S_\sigma$, \be\label{twocase}\la 2e_1+2\alpha\ra=\la 2e_1\ra,\quad \mbox{ and either } \quad 2\alpha=0 \;\mbox{ or }\; k_m=m-1.\ee Moreover, both $2\alpha=0$ and $k_m=m-1=1$ when $m=2$. It follows that $$S^*=e_1^{[2m-1]}\bdot e_2^{[2m-1]}\bdot (e_1+e_2+\alpha)^{[2k_m+1]}.$$

If $H=\la e_1,e_2\ra$ is a proper subgroup, then  $S$ contains a subsequence with two distinct terms from $H$ and length at least $4m-3\geq \eta(H)-1$, and thus contains a nontrivial zero-sum of length at most $\exp(H)\leq 2m$ using the established Conjecture \ref{conj-shortzs}.4, contrary to \eqref{start-hyp}. Therefore $H=\la e_1,e_2\ra=G$, forcing $(e_1,e_2)$ to be a basis for $G=C_{2m}\oplus C_{2m}$. If $\alpha\in \la e_1\ra$ or $\alpha\in \la e_2\ra$, then Lemma \ref{lem-genPropB} implies that $S$ has the desired structure, completing the proof. Hence we may assume otherwise, so  in view of
$m\alpha=0$ and $\la 2e_1+2\alpha\ra=\la 2e_1\ra$, it follows that $$\alpha=xe_1+me_2\quad\mbox{ for some $x\in [1,2m-1]$}$$ with $m$ even and $$\ord( 2(1+x)e_1)=\ord(2e_1+2\alpha)=\ord(2e_1)=m.$$
We have two final subcases based upon which possibility occurs in \eqref{twocase}.

Suppose $k_m=m-1\geq 2$. Then $e_1+e_2+\alpha=(1+x)e_1+(1+m)e_2$. For  $r\in [0,\frac{m}{2}-1]$, we have $T_r:=e_2^{[m-2r-1]}\bdot ((1+x)e_1+(1+m)e_2)^{[2r+1]}$ as a subsequence of $S$ with sum $(1+x)e_1+r\cdot 2(1+x)e_1$. Since $\ord(2(1+x)e_1)=m$, it follows that  $\{\sigma(T_r): r\in [0,\frac{m}{2}-1]\}\subseteq (1+x)e_1+[1,m]_{2e_1}$ is a subset of cardinality $\frac{m}{2}$, so there must be some $r\in [0,\frac{m}{2}-1]$ such that $\sigma(T_r)=ye_1$ for some  $y\in [1,2m]$ with $y\geq 2(\frac{m}{2}-1)+1=m-1\geq 2$. It follows that $e_1^{[2m-y]}\bdot T_r$ is a nontrivial zero-sum subsequence of $S$ with length $3m-y\leq 2m+1$, contradicting \eqref{start-hyp} (as $k\leq 2m-2$). 

Suppose  $2\alpha=0$.
Then  $e_1+e_2+\alpha=(1+m)e_1+(1+m)e_2$, else Lemma \ref{lem-genPropB} yields the desired structure for $S$.
If $2k_m-1\leq m$, then $e_1^{[m-2k_m+1]}\bdot e_2^{[m-2k_m+1]}\bdot ((1+m)e_1+(1+m)e_2)^{2k_m-1}$ is a nontrivial zero-sum subsequence of $S$ with length $2m-2k_m+1\leq 2m-1$, contradicting \eqref{start-hyp}. On the other hand, if $2k_m-1\geq m+1$, then
$e_1\bdot e_2\bdot ((1+m)e_1+(1+m)e_2)^{m-1}$ is a nontrivial zero-sum subsequence of $S$ with length $m+1\leq 2m+1$, again contradicting \eqref{start-hyp} and completing the case.

\smallskip

\textbf{CASE 2.} $n\geq 3$.

\smallskip

Since $n\geq 3$, if $S^*=W_0\bdot W_1\bdot\ldots\bdot W_{2m-2+k_m}$ is any block decomposition, then \eqref{struct} ensures
\be\label{onceblock}f_1\in \supp(\varphi(\wtilde W_0)).\ee

\smallskip

\textbf{Claim H.} The term $f_1\in \supp(\varphi(S))$ is good.

\begin{proof}
Let $S^*=W_0\bdot W_1\bdot \ldots\bdot W_{2m-2+k_m}$ be a block decomposition  with associated index $k_\emptyset=0$.
If $f_1\in \supp(\varphi(S\bdot W_0^{[-1]}))$, then \eqref{onceblock} together with Claim D implies that $f_1$ is good. Therefore we can instead assume  \be\label{chole}\supp(\varphi(S\bdot W_0^{[-1]}))\subseteq \la f_1\ra+f_2.\ee We can also assume \be\label{vpup}\vp_{f_1}(\varphi(S))\geq 2,\ee lest $f_1$ being good holds trivially.

Let us show there are $g\in \supp(\wtilde W_0)$ and $h\in \supp(S\bdot W_0^{[-1]})$ with $\varphi(g),\,\varphi(h)\in \la f_1\ra+f_2$ and $$\varphi(g)-\varphi(h)=zf_1\quad \mbox{for some } z\in [2,n-1].$$
Assuming this fails, let $xf_1+f_2\in \supp(\varphi(\wtilde W_0))$. Then \eqref{chole} implies $\supp(\varphi(S\bdot W_0^{[-1]}))\subseteq \{xf_1+f_2, (x-1)f_1+f_2\}$. If $\vp_{xf_1+f_2}(\varphi(S))\geq mn>n$, then the term $g$ is good by Claim G, in turn implying that $\vp_g(S)\geq mn$, where $g\in \supp(S)$ is the unique term with $\varphi(g)=xf_1+f_2$, whence $0\in \Sigma_{mn}(S)$, contrary to \eqref{start-hyp}. Therefore we can assume $\vp_{xf_1+f_2}(\varphi(S))\leq mn-1$, and thus $\vp_{(x-1)f_1+f_2}(\varphi(S\bdot W_0^{[-1]}))\geq (2m-2+k_m)n-mn+1\geq mn-n+1>n$.
Claim G now ensures that the term $(x-1)f_1+f_2$ is also good, and thus has multiplicity at most $mn-1$ in $\varphi(S)$ lest we obtain the same contradiction as before. There are at least $(2m-2+k_m)n+n-1\geq 2mn-1$ terms of $\varphi(S)$ from  $\la f_1\ra+f_2$. As a result,  it follows that there is some $x'f_1+f_2\in\supp(\varphi(\wtilde W_0))$ with $x'f_1\notin \{ xf_1, (x-1)f_1\}$. But now, taking $g'=x'f_1+f_2$ and $h'=(x-1)f_1+f_2\in \supp(\varphi(S\bdot W_0^{[-1]}))$, we find that $g'-h'=zf_1$ with $z\in [2,n-1]$, as desired. Thus the existence of $g$ and $h$ is established.

Let $j\in [1,2m-2+k_m]$ be an index with $h\in \supp(W_j)$. In view of \eqref{chole} and \eqref{vpup}, let $g_1\bdot g_2\mid \wtilde W_0$ be a length two subsequence with $\varphi(g_1)=\varphi(g_2)=f_1$. Since
 $1\leq n-z\leq n-2$ and $\vp_{f_1}(\varphi(W_0\bdot g_2^{[-1]}))=n-2$, it follows that there is a subsequence  $T\mid W_0\bdot g_2^{[-1]}$ with $\varphi(T)=f_1^{[n-z]}$ and $g_1\in \supp(T)$. Since $\varphi(g)\in \la f_1\ra+f_2$, we have
$g\notin \supp(T)$.
Set  $$W'_0=W_0\bdot T^{[-1]}\bdot g^{[-1]}\bdot h\quad\und\quad W'_j=W_j\bdot h^{[-1]}\bdot g\bdot T$$ with $W'_i=W_i$ for $i\neq 0,j$. Then, by construction,
$S^*=W'_0\bdot W'_1\bdot\ldots\bdot W'_{2m-2+k_m}$ is a weak block decomposition with associated index $k'_\emptyset\in \{0,j\}$. Moreover,  $g_2\in \supp(\wtilde W'_0)$  and $g_1\in \supp(\wtilde W'_j)$ with $\varphi(g_1)=\varphi(g_2)=f_1$. Thus applying Claim D to $S^*=W'_0\bdot W'_1\bdot\ldots\bdot W'_{2m-2+k_m}$  implies $f_1$ is good, completing the claim.
\end{proof}

Let $e_1\in \supp(S)$ with $\varphi(e_1)=f_1$, which exists in view of \eqref{onceblock}. By Claim H, every $g\in \supp(S)$ with $\varphi(g)=f_1$ has $g=e_1$, and if $k_n=0$ with $\varphi(g_0)=f_1=\varphi(e_1)$, we can choose  $g_0\in g_0+\ker \varphi$ appropriately so that $g_0=e_1$, thereby ensuring that every  $g\in \supp(S^*)$ with $\varphi(g)=f_1$ has $g=e_1$.

\subsection*{Claim I} If $g,\,h\in \supp(S)$ with $\varphi(g),\varphi(h)\in \la f_1\ra+f_2$, then $g-h\in \la e_1\ra$.

\begin{proof}
Let $S^*=W_0\bdot W_1\bdot\ldots\bdot W_{2m-2+k_m}$ be a block decomposition with  associated index $k_\emptyset=0$ and  associated sequence  $S_\sigma=\prod^\bullet_{i\in [1,2m-2+k_m]}\sigma(W_i)$. Since $f_1$ is good (by Claim H), we have $\vp_{f_1}(\varphi(S))\leq mn-1$, lest $S$ contain an $mn$-term zero-sum, contrary to \eqref{start-hyp}.
Thus, since each $\varphi(W_i)$, for $i\in [1,2m-2+k_m]$, either consists of $n$ terms equal to $f_1$ or no terms equal to $f_1$ (in view of \eqref{struct}), it follows that $\vp_{f_1}(\varphi(S\bdot W_0^{[-1]}))\leq (m-1)n$, meaning there are at least $(2m-2+k_m-(m-1))n\geq mn$ terms of $\varphi(S\bdot W_0^{[-1]})$ from $\la f_1\ra+f_2$. These terms cannot all be equal to each other, lest they would be good by Claim G giving rise to an element with multiplicity at least $mn$ in $S$, contradicting \eqref{start-hyp} as before. Therefore \be\label{supp2}|\supp(\varphi(S\bdot W_0^{[-1]}))\setminus \{f_1\}|\geq 2.\ee

For $x\in [0,n-1]$,  let $L_x\mid \wtilde W_0$ be the subsequence of $\wtilde W_0$ consisting of all terms $g$ with $\varphi(g)=xf_1+f_2$, and let $R_x\mid S\bdot W_0^{[-1]}$ be the subsequence of $S\bdot W_0^{[-1]}$ consisting of all terms $g$ with $\varphi(g)=xf_1+f_2$.
Let $I_L\subseteq [0,n-1]$ be all those $x\in [0,n-1]$ with $L_x$ nontrivial, and let $I_R\subseteq [0,n-1]$ be all those $x\in [0,n-1]$ with $R_x$ nontrivial.  By a slight abuse of notation, we consider the subscripts on the $L_x$ and $R_x$ modulo $n$.
In view of \eqref{supp2}, $$|I_R|\geq 2.$$

Let $g\in \supp(\wtilde W_0)$ and $h\in \supp(S\bdot W_0^{[-1]})$ be arbitrary with $\varphi(g),\,\varphi(h)\in \la f_1\ra+f_2$, and  let $$\varphi(g)-\varphi(h)=zf_1\quad\mbox{ with $z\in [1,n]$}.$$

Suppose $k_n=0$ and $\varphi(g_0)\neq f_1$.
Then $e_1^{[n-z]}\bdot g\mid \wtilde W_0$. Set $W'_0=W_0\bdot \big(e_1^{[n-z]}\bdot g\big)^{[-1]}\bdot h$, $W'_j=W_j\bdot h^{[-1]}\bdot e_1^{[n-z]}\bdot g$ and $W'_i=W_i$ for $i\neq 0,j$, where $h\in \supp(W_j)$.
Then $S^*=W'_0\bdot W'_1\bdot\ldots\ldots\bdot W_{2m-2+k_m}$ is a weak block decomposition with  $g_0\in \supp(W'_0)$, associated index $k'_\emptyset =k_\emptyset =0$ (as $g\in \supp(W'_0)$) and associated sequence  $S'_\sigma=\prod^\bullet_{i\in [1,2m-2+k_m]}\sigma(W'_i)$. Note that $S'_\sigma$ is obtained from $S_\sigma$ by replacing the term $\sigma(W_j)$ by  $\sigma(W'_j)=\sigma(W_j)-h+g+(n-z)e_1$.
 By Lemma \ref{lem-perturb-solo} and Claim B, we must have $S_\sigma=S'_\sigma$, implying $\sigma(W_j)=\sigma(W'_j)$ and $g-h\in \la e_1\ra$.
As this is true for arbitrary $g\in \supp(\wtilde W_0)$ and $h\in \supp(S\bdot W_0^{[-1]})$, the claim is complete in this case, allowing us to assume $k_n=1$ or $\varphi(g_0)=f_1.$
In particular, it now follows from \eqref{struct} that  $$|I_L|\geq 2$$  (as $x_1+\ldots+x_n\equiv 1\mod n$ ensures not all $x_i$ are equal to each other).

Suppose $z\geq 2$ for $g$ and $h$ as before. Then   $e_1^{[n-z]}\bdot g\mid \wtilde W_0$. Set $W'_0=W_0\bdot \big(e_1^{[n-z]}\bdot g\big)^{[-1]}\bdot h$, \ $W'_j=W_j\bdot h^{[-1]}\bdot e_1^{[n-z]}\bdot g$ and $W'_i=W_i$ for $i\neq 0,j$, where $h\in \supp(W_j)$. Then $S^*=W'_0\bdot W'_1\bdot\ldots\ldots\bdot W_{2m-2+k_m}$ is a weak block decomposition with associated index $k'_\emptyset$. If $k_n=0$, then  $g_0\in \supp(W'_0)$, ensuring $k'_\emptyset=k_\emptyset=0$; if $k_n=1$, then $|W'_0|=|W_0|-(n-z)\geq n+1$ follows in view of $z\geq 2$, also ensuring that  $k'_\emptyset=k_\emptyset=0$. Applying Lemma \ref{lem-perturb-solo} and Claim B, we find that $g$ and $h$ are from the same $\la e_1\ra$-coset as before.

The argument from the previous paragraph shows that, for any $x\in I_L$, all  terms from  $L_x$ are from the same $\la e_1\ra$-coset as all terms from $R_y$, for any $y\not\equiv x-1\mod n$.
If all terms from $\prod^\bullet_{y\in I_R} R_y$ are from the same
$\la e_1\ra$-coset, then each $x\in I_L$ would have all terms from $L_x$ being from the same $\la e_1\ra$-coset as all terms from some $R_y$ with  $y\in I_R$ (as $|I_R|\geq 2$), and thus from the same $\la e_1\ra$-coset that contains all terms from $\prod^\bullet_{y\in I_R}R_y$.
As this would be true for any $x\in I_L$, there would only be one $\la e_1\ra$-coset containing all $g\in \supp(S)$ with $\varphi(g)\in \la f_1\ra+f_2$, completing the proof of the claim. Therefore we can instead assume we need at least two $\la e_1\ra$-cosets to cover all terms from $\prod^\bullet_{y\in I_R}R_y$. In particular, for any $x\in I_L$, we must have $x-1\in I_R$, so $I_L-1\subseteq I_R\mod n$.
 Likewise, since $|I_L|\geq 2$, we can assume  we need at least two $\la e_1\ra$-cosets to cover all terms from $\prod^\bullet_{x\in I_L}L_x$, and thus for any $y\in I_R$, we have $y+1\in I_L$, so $I_R+1\subseteq I_L\mod n$. It follows that $|I_L|=|I_R|$ with $$I_R=\{x-1:\;x\in I_L\}\mod n.$$

Suppose $|I_R|\geq 3$.
 Letting  $x_1,\,x_2\in I_L$ be distinct, then all terms from $\prod^\bullet_{y\in I_R\setminus \{x_1-1\}}R_y$ are from the same $\la e_1\ra$-cost as the terms from $L_{x_1}$, while  all terms from $\prod^\bullet_{y\in I_R\setminus \{x_2-1\}}R_y$ are from the same $\la e_1\ra$-cost as the terms from $L_{x_2}$. Since $|I_R|\geq 3$, there would be a common element $y\in I_R\setminus \{x_1-1,x_2-1\}$, forcing all terms from  $\prod^\bullet_{y\in I_R}R_y$ to be from the same $\la e_1\ra$-coset, which we just assumed was not the case. So we instead conclude that $|I_L|=|I_R|=2$. Let $$I_L=\{x,y\}\quad\und\quad I_R=\{x-1,y-1\}\mod n.$$

In view of \eqref{struct}, any $W_j$ with $j\in [1,2m-2+k_m]$ that contains a term $h$ with $\varphi(h)\in \la f_1\ra+f_2$ must have all its terms from $\la f_1\ra+f_2$. Thus, since $|W_j|=n\geq 3$ and $|I_R|=2$, the Pigeonhole Principle ensures that there are $h_1\bdot h_2\mid W_j$ with $\varphi(h_1)=\varphi(h_2)$, say w.l.o.g. $\varphi(h_1)=\varphi(h_2)=(x-1)f_1+f_2$. By definition of $I_L=\{x,y\}$, there are $g_1\bdot g_2\mid \wtilde W_0$ with $\varphi(g_1)=xf_1+f_2$ and $\varphi(g_2)=yf_1+f_2$. Let \be \label{zdef}z'\equiv x+y-2(x-1)\mod n\quad\mbox{ with $z'\in [1,n]$}.\ee

Suppose $z'\geq 2$. Then $e_1^{[n-z']}\bdot g_1\bdot g_2\mid \wtilde W_0$. Set $W'_0=W_0\bdot (e_1^{[n-z']}\bdot g_1\bdot g_2)^{[-1]}\bdot h_1\bdot h_2$, \ $W'_j=W_j\bdot h_1^{[-1]}\bdot h_2^{[-1]}\bdot e_1^{[n-z']}\bdot g_1\bdot g_2$ and $W'_i=W_i$ for $i\neq 0,j$.
Then $S^*=W'_0\bdot W'_1\bdot\ldots\ldots\bdot W_{2m-2+k_m}$ is a weak block decomposition say with associated index $k'_\emptyset$ and associated sequence $S'_\sigma$.
If $k_n=0$, then  $g_0\in \supp(W'_0)$, ensuring $k'_\emptyset=k_\emptyset=0$; if $k_n=1$, then $|W'_0|=|W_0|-(n-z')\geq n+1$ follows in view of $z'\geq 2$, also ensuring that  $k'_\emptyset=k_\emptyset=0$. By Lemma \ref{lem-perturb-solo} and Claim B, it follows that $S_\sigma=S'_\sigma$, and thus \be\label{dicet}\sigma(W_j)=\sigma(W'_j)=
\sigma(W_j)+(n-z')e_1+g_1+g_2-h_1-h_2.\ee
Since $\varphi(g_2)=yf_1+f_2$ and $\varphi(h_2)=(x-1)f_2+f_2$, we have $g_2\in \supp(L_y)$ and $h_2\in \supp(R_{x-1})$, so our previous argument ensures $g_2$ and $h_2$ are from the same $\la e_1\ra$-coset, and then  \eqref{dicet} implies that $g_1$ and $h_1$ are from the same $\la e_1\ra$-coset. Since $\varphi(g_1)=xf_1+f_2$ and $\varphi(h_1)=(x-1)f_2+f_2$, so $g_1\in \supp(L_x)$ and $h_1\in \supp(R_{x-1})$,  the terms from $L_x$ are from the same $\la e_1\ra$-coset as both $R_{x-1}$ and $R_{y-1}$, contradicting our assumption that we need at least two $\la e_1\ra$-cosets to cover the terms from $\prod^\bullet_{z\in I_R}R_z=R_{x-1}\bdot R_{y-1}$. So we are left to conclude $z'=1$, which by \eqref{zdef} means   \be\label{conghelp}y\equiv x-1\mod n.\ee

Since $I_R=\{x-1,y-1\}$, there is some $j\in [1,2m-2+k_m]$ and $h\in \supp(W_j)$ with $\varphi(h)=(y-1)f_1+f_2$. In view of \eqref{struct}, we have $\supp(\varphi(W_j))\subseteq \la f_1\ra+f_2$, and thus all terms from $\varphi(W_j)$ are either equal to $(x-1)f_1+f_2$ or $(y-1)f_1+f_2$. Since $\varphi(W_j)$ is an $n$-term zero-sum, it cannot have a term with multiplicity exactly $n-1$, so there must be $h_1\bdot h_2\mid W_j$ with $\varphi(h_1)=\varphi(h_2)=(y-1)f_1+f_2$. Repeating the argument of the previous paragraph swapping the roles of $y$ and  $x$, we conclude that $x\equiv y-1\mod n$. Combined with \eqref{conghelp}, it follows that $0\equiv 2\mod n$, contradicting that $n\geq 3$, which concludes the claim.
\end{proof}

Note that any $g\in \supp(S)$ with $\varphi(g)\neq f_1$ has $\varphi(g)=xf_1+f_2$ for some $x\in [0,n-1]$ by Claim C.1.
Thus, in view of Claims H and I, we see that $\supp(S)\subseteq\{e_1\}\cup \big(\la e_1\ra+e_2\big)$ for some $e_2\in \supp(S)$. This allow us to apply Lemma \ref{lem-genPropB} to $S$ to complete the proof.
\end{proof}

Next, we consider the case when $k_n\in[2,n-1]$.

\begin{proposition}\label{prop-kn-2n-1}
Let $m,\,n\geq 2$  and let $k\in [0,mn-1]$ with $n=k_mn+k_n$,  where $k_m\in [0,m-1]$ and $k_n\in [2, n-1]$. Suppose Conjecture \ref{conj-shortzs} holds for $k_n$ in $C_n\oplus C_n$. Suppose either Conjecture \ref{conj-shortzs} also holds for $k_m$ in $C_m\oplus C_m$, or else $k_m\in [1,m-2]$ and Conjecture \ref{conj-shortzs} also holds for $k_m+1$ in $C_m\oplus C_m$. Then Conjecture \ref{conj-shortzs} holds for $k$ in $C_{mn}\oplus C_{mn}$.
\end{proposition}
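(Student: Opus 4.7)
The plan is to closely follow the blueprint of the proof of Proposition \ref{prop-main-k_n-01}, replacing the use of Theorem \ref{thm-fixedprop} (which supplied the structure of the long residual block $W_0$ there) by a direct application of the assumed Conjecture \ref{conj-shortzs} at $k_n$ in $C_n\oplus C_n$. Let $G=C_{mn}\oplus C_{mn}$ and let $\varphi:G\to G$ be multiplication by $m$, so $\varphi(G)\cong C_n\oplus C_n$ and $\ker\varphi\cong C_m\oplus C_m$. The boundary cases $k\le 1$ and $k=mn-1$ are already known, so I may assume $k_m\in[1,m-1]$ and $k\in[2,mn-2]$. I would take $S^*=S$ in the principal case, and $S^*=S\bdot g_0$ with $\varphi(g_0)=-\sigma(\varphi(S))$ in the subcase where only Conjecture \ref{conj-shortzs} for $k_m+1$ in $C_m\oplus C_m$ is available, so that $\sigma(W_0)\in\ker\varphi$ and Lemma \ref{lem-k+1-to-k} becomes applicable to the length-$(2m-1+k_m)$ extension of $S_\sigma$.

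Next I would introduce block decompositions $S^*=W_0\bdot W_1\bdot\ldots\bdot W_{2m-2+k_m}$ with $|W_i|\le n$ and $\varphi(W_i)$ a nontrivial zero-sum for each $i\in[1,2m-2+k_m]$; existence follows from $\mathsf{s}_{\le n}(C_n\oplus C_n)=3n-2$ by iterative extraction. The analog of Claim~A should force $|W_i|=n$ for all $i\ge 1$ and $|W_0|\in\{2n-2+k_n,\,2n-1+k_n\}$ (depending on whether $g_0$ is present), with the appropriate $0\notin\Sigma_{\le 2n-1-k_n}(\varphi(W_0))$ condition. The argument proceeds by contradiction: a short $\varphi$-zero-sum inside $W_0$ combined with the $\sigma(W_j)\in\ker\varphi$ for $j\ge 1$ feeds $\mathsf{s}_{\le 2m-1-k_m}(C_m\oplus C_m)=2m-1+k_m$ to produce a zero-sum in $\ker\varphi$, which lifts to a nontrivial zero-sum in $S$ of length at most $2mn-1-k$. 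Defining $S_\sigma\in\Fc(\ker\varphi)$ as the sequence of block-sums with the flex-index $k_\emptyset$ convention, the analog of Claim~B then gives Conjecture \ref{conj-shortzs} for $S_\sigma$ in $C_m\oplus C_m$ using the assumed hypothesis directly, or via Lemma~\ref{lem-k+1-to-k} in the $k_m+1$ subcase.

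At this point the assumed Conjecture \ref{conj-shortzs} at $k_n$ in $C_n\oplus C_n$ applies to $\varphi(W_0)$, yielding a basis $(f_1,f_2)$ of $\varphi(G)$ with $\varphi(W_0)=f_1^{[n-1]}\bdot f_2^{[n-1]}\bdot(f_1+f_2)^{[k_n]}$ for $k_n\in[2,n-2]$, or the analogous rigid form for $k_n=n-1$. I would then repeat the shuffling-goodness arguments (analogs of Claims~D, G, H): given $g\in\supp(\wtilde W_{k_\emptyset})$ and $h\in\supp(W_j)$ with $\varphi(g)=\varphi(h)$ and $j\ne k_\emptyset$, swapping $g$ and $h$ produces a weak block decomposition whose associated $\sigma$-sequence differs from $S_\sigma$ in exactly one coordinate, so Lemma~\ref{lem-perturb-solo} together with Claim~B's conclusion forces $g=h$. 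Iterating yields unique lifts for all elements of $\supp(\varphi(W_0))$, and an analog of Claim~I shows all such lifts lie in a common coset of a cyclic subgroup $\la e_1\ra$, producing $\supp(S)\subseteq\{e_1\}\cup(\la e_1\ra+e_2)$. A final application of Lemma~\ref{lem-genPropB} then concludes.

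The main obstacle I anticipate is the analog of Claim~E: a priori, a block $\varphi(W_i)$ with $i\ge 1$ is only required to be an arbitrary length-$n$ zero-sum of $\varphi(G)$, not necessarily supported in $\{f_1,f_2,f_1+f_2\}$, and so propagating the three-element support of $\varphi(W_0)$ to all of $\varphi(S^*)$ is not automatic. The strategy is to show, via weak block decompositions, that any hypothetical term of $\varphi(S^*)$ outside $\supp(\varphi(W_0))$ could be shuffled into an alternative residual block $W_0'$ of the same length still satisfying $0\notin\Sigma_{\le 2n-1-k_n}(\varphi(W_0'))$; the resulting structure coming from Conjecture \ref{conj-shortzs} at $k_n$ would then contradict either the fixed structure of $\varphi(W_0)$ or of $S_\sigma$ through Lemma~\ref{lem-perturb-solo}. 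The $k_n=n-1$ subcase, where Conjecture~\ref{conj-shortzs}.4 involves a parameter $x\in[1,n-1]$ with $\gcd(x,n)=1$, will likely require a separate case split, since the basis produced by the Conjecture is no longer uniquely determined.
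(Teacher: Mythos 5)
Your overall architecture is indeed the paper's: block decompositions relative to the multiplication-by-$m$ map $\varphi$, the hypothesized Conjecture at $k_n$ applied to the residual block, the hypothesized Conjecture at $k_m$ (or $k_m+1$) applied to the sequence of block sums in $\ker\varphi$, goodness of terms via Lemma~\ref{lem-perturb-solo}, and Lemma~\ref{lem-genPropB} to finish. However, the $g_0$-adjunction you import from the $k_n\in\{0,1\}$ case is a step that fails here. If $\varphi(S\bdot g_0)$ is zero-sum and you extract $2m-2+k_m$ blocks of length $n$, the residual block $W_0$ has length $2n-1+k_n=2n-2+(k_n+1)$; reading off its structure therefore requires Conjecture~\ref{conj-shortzs} at $k_n+1$ in $C_n\oplus C_n$, which is not among the hypotheses (and when $k_n=n-1$ is not even an admissible index). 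The paper never modifies $S$: the residual $W$ of a block decomposition already has $|W|=2n-2+k_n$ and $0\notin\Sigma_{\leq 2n-1-k_n}(\varphi(W))$, which is exactly the extremal situation for the hypothesized index $k_n$, and the extra $\ker\varphi$-term needed for the $k_m+1$ branch is obtained by an \emph{augmented} block decomposition: since $k_n\geq 1$ gives $|W|=2n-1+(k_n-1)=\mathsf s_{\leq 2n-k_n}(\varphi(G))$, one can split off a further block $W_0\mid W$ with $\varphi(W_0)$ zero-sum and $|W_0|\leq 3n-1-k_n$, and it is $\sigma(W_0)\bdot S_\sigma$, of length $2m-1+k_m$, that feeds Conjecture~\ref{conj-shortzs} at $k_m+1$ followed by Lemma~\ref{lem-k+1-to-k}. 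This device is precisely what the assumption $k_n\geq 2$ buys you, and it is the missing idea in your plan.

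Second, the obstacle you flag at the end --- propagating the three-element support of $\varphi(W)$ to all of $\varphi(S)$ --- is correctly identified, but your one-sentence strategy is where most of the actual proof lives, and as sketched it does not close. If $g\in\supp(W_1)$ has $\varphi(g)\notin\supp(\varphi(W))$, one cannot in general shuffle $g$ into a new residual block while preserving both its length and the absence of short $\varphi$-zero-sums; the paper must first show that at most one element of $\varphi(W\bdot W_1)$ has multiplicity at least $n$, reduce by a re-factorization to the case $\vp_{\varphi(g)}(\varphi(W\bdot W_1))\leq n-2$, and then apply the already-established case $k=n-1$ of Conjecture~\ref{conj-shortzs} to the length-$(3n-3)$ sequence $\varphi(W\bdot W_1\bdot g_1^{[-1]}\bdot g_2^{[-1]}\bdot g^{[-1]})$ to either extract a length-$n$ zero-sum block avoiding $g$, $g_1$, $g_2$ or reach a multiplicity contradiction; only then does the Conjecture at $k_n$, applied to a new residual containing $g$ together with preimages of $\overline e_1$ and $\overline e_2$, produce the desired contradiction. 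Relatedly, the augmented decomposition only yields the Conjecture for the extended sequence $\sigma(W_0)\bdot S_\sigma$ when $k_m\leq m-2$, which is why the paper treats $k_m=m-1$ by a separate argument using weak block decompositions with $|W|\geq n-1+2k_n$; your sketch does not account for this case split.
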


\begin{proof}
Let $G=C_{mn}\oplus C_{mn}$ and let $S\in \Fc(G)$ be a sequence with \be\label{conto} |S|=2mn-2+k\quad\und\quad 0\notin \Sigma_{\leq 2mn-1-k}(G).\ee
 Since $k_m\in [0,m-1]$ and $k_n\in [2,n-1]$, we have $n\geq 3$ and $k=k_mn+k_n\in [2,mn-1]$.
 Since Conjecture \ref{conj-shortzs} is known for  $k=mn-1$ (as remarked in the introduction), we can assume $k=k_mn+k_n\in [2,mn-2]$.
 We need to show Conjecture \ref{conj-shortzs}.3 holds for $S$. Let $\varphi:G\rightarrow G$ be the multiplication by $m$ homomorphism, so $\varphi(x)=mx$. Note $$\varphi(G)=mG\cong C_n\oplus C_n\quad\und\quad \ker \varphi=nG\cong C_m\oplus C_m.$$
 Define a \emph{block decomposition} of $S$ to be a factorization $$S=W\bdot W_1\bdot\ldots\bdot W_{2m-2+k_m}$$ with $1\leq |W_i|\leq n$ and $\varphi(W_i)$ zero-sum for each $i\in [1,2m-2+k_m]$. Since $\mathsf s_{\leq n}(\varphi(G))=\mathsf s_{\leq n}(C_n\oplus
 C_n)=3n-2$ and $|S|=(2m-3+k_m)n+3n-2+k_n\geq (2m-3+k_m)n+3n-2$, it follows by repeated application of the definition of $\mathsf s_{\leq n}(\varphi(G))$ that $S$ has a block decomposition.

 \subsection*{Claim A} If $S =W\bdot W_1\bdot\ldots\bdot W_{2m-2+k_m}$ is a block decomposition of $S$, then $|W_i|=n$ for all $i\in [1,2m-2+k_m]$,  $|W|=2n-2+k_n$, $0\notin \Sigma_{\leq 2n-1-k_n}(\varphi(W))$,  and  $0\notin \Sigma_{\leq n-1}(\varphi(S))$. In particular, Conjecture \ref{conj-shortzs} holds for $\varphi(W)$.

\begin{proof}
Suppose $0\in \Sigma_{\leq 2n-1-k_n}(\varphi(W))$. Then there is a nontrivial subsequence $W_0\mid W$ with $|W_0|\leq 2n-1-k_n$ and $\varphi(W_0)$ zero-sum. Now $\sigma(W_0)\bdot \sigma(W_1)\bdot\ldots\bdot \sigma(W_{2m-2+k_m})$ is a sequence of $2m-1+k_m$ terms from $\ker\varphi\cong C_m\oplus C_m$. Since $\mathsf s_{\leq 2m-1-k_m}(C_m\oplus C_m)=2m-1+k_m$, it follows that it has a nontrivial zero-sum sequence, say $\prod_{i\in I}^\bullet \sigma(W_i)$ for some nonempty $I\subseteq [0,2m-2+k_m]$ with $|I|\leq 2m-1-k_m$. But then $\prod^\bullet_{i\in I}W_i$ is a nontrivial zero-sum subsequence of $S$ with \begin{align*}|{\prod}^\bullet_{i\in I}W_i|&\leq
\max\{|W_0|,n\}+(|I|-1)n\leq 2n-1-k_n+(2m-2-k_m)n=2mn-1-k,\end{align*} contradicting  \eqref{conto}.
So we instead conclude that $0\notin \Sigma_{\leq 2n-1-k_n}(\varphi(W))$.

As a result, since $\mathsf s_{\leq 2n-1-k_n}(\varphi(G))=\mathsf s_{\leq 2n-1-k_n}(C_n\oplus C_n)=2n-1+k_n$, and since $|W_i|\leq n$ for all $i\in [1,2m-2+k_m]$, it follows that $$2n-2+k_n=2mn-2+k-(2m-2+k_m)n\leq |S|-\Sum{i=1}{2m-2+k_m}|W_i|=|W|\leq 2n-2+k_n,$$ forcing equality to hold in our estimates, i.e., $|W_i|=n$ for all $i\in [1,2m-2+k_m]$ and $|W|=2n-2+k_n$.
If $0\in \Sigma_{\leq n-1}(\varphi(S))$, then we can find a nontrivial subsequence $W'_1\mid S$ with $\varphi(W'_1)$ zero-sum and $|W'_1|\leq n-1$. Applying the argument used to show the existence of a  block decomposition, we obtain a block decomposition $S=W'\bdot W'_1\bdot \ldots\bdot W'_{2m-2+k_m}$ with $|W'_1|\leq n-1$, contradicting what was just shown. Therefore $0\notin \Sigma_{\leq n-1}(\varphi(S))$. Finally, since $|W|=2n-2+k_n$ and $0\notin \Sigma_{\leq 2n-1-k_n}(\varphi(W))$ with Conjecture \ref{conj-shortzs} holding for $k_n$ in $C_n\oplus C_n$ by hypothesis, it follows that Conjecture \ref{conj-shortzs} holds for $\varphi(W)$, completing the claim.
\end{proof}

Suppose $$S=W\bdot W_1\bdot\ldots\bdot W_{2m-2+k_m}$$ with each $\varphi(W_i)$ a nontrivial zero-sum for $i\in [1,2m-2+k_m]$ and $|W|\geq n-1+2k_n$. We call this a \emph{weak block decomposition} of $S$ with associated sequence $$S_\sigma=\sigma(W_1)\bdot\ldots\bdot \sigma(W_{2m-2+k_m})\in \Fc(\ker \varphi).$$ Since $2n-2+k_n\geq n-1+2k_n$, any block decomposition is also a weak block decomposition. Suppose  $$S=W\bdot W_0\bdot W_1\bdot\ldots\bdot W_{2m-2+k_m}$$ with each $\varphi(W_i)$ a nontrivial zero-sum for $i\in [0,2m-2+k_m]$, $|W_i|=n$ for all $i\in [1,2m-2+k_m]$, and $|W_0|\leq 3n-1-k_n$. We call this an \emph{augmented block decomposition} of $S$. In such case, $S=(W\bdot W_0)\bdot W_1\bdot\ldots\bdot W_{2m-2+k_m}$ is a block decomposition of $S$ with associated sequence $S_\sigma$, and we call $$\wtilde S_\sigma=\sigma(W_0)\bdot S_\sigma=\sigma(W_0)\bdot \sigma(W_1)\bdot\ldots\bdot \sigma(W_{2m-2+k_m})\in \Fc(\ker \varphi)$$ the associated sequence for the augmented block decomposition.
Conversely, if $S=W\bdot W_1\bdot\ldots\bdot W_{2m-2+k_m}$ is a block decomposition, then Claim A ensures that  $|W|=2n-1+(k_n-1)=\mathsf s_{\leq 2n-k_n}(\ker \varphi)$ (in view of $k_n\geq 1$). As a result, there is a nontrivial subsequence $W_0\mid W$ with $\varphi(W_0)$ zero-sum and $|W_0|\leq 2n-k_n\leq 3n-1-k_n$, ensuring  $(W\bdot W_0^{[-1]})\bdot W_0\bdot\ldots\bdot W_{2m-2+k_m}$ is  an augmented block decomposition, showing such decompositions exist.

\subsection*{Claim B}
\begin{itemize}
\item[1.]If $S=W\bdot W_1\bdot \ldots\bdot W_{2m-2+k_m}$ is a weak block decomposition with associated sequence $S_\sigma$, then $|S_\sigma|=2m-2+k_m$ and $0\notin \Sigma_{\leq 2m-1-k_m}(S_\sigma)$. If it is also a block decomposition, then Conjecture \ref{conj-shortzs} holds for $S_\sigma$.
\item[2.]
If $S=W\bdot W_0\bdot \ldots\bdot W_{2m-2+k_m}$ is an augmented block decomposition with associated sequence $\wtilde S_\sigma$, then $|\wtilde S_\sigma|=2m-1+k_m$ and $0\notin \Sigma_{\leq 2m-2-k_m}(\wtilde S_\sigma)$. Moreover, if $k_m\in [0,m-2]$, then Conjecture \ref{conj-shortzs} holds for $\wtilde S_\sigma$.
\end{itemize}

\begin{proof}
If $S=W\bdot W_1\bdot \ldots\bdot W_{2m-2+k_m}$ is a weak block decomposition with associated sequence $S_\sigma$, then $|S_\sigma|=2m-2+k_m$ holds by definition. If $0\in \Sigma_{\leq 2m-1-k_m}(S_\sigma)$, then there is some $I\subseteq [1,2m-2+k_m]$ with $1\leq |I|\leq 2m-1+k_m$ and $\prod^\bullet_{i\in I}\sigma(W_i)$ zero-sum. In such case, since $|W_i|\geq n$ for all $i\in [1,2m-2+k_m]$ by Claim A, we find that $\prod^\bullet_{i\in I} W_i$ is a nontrivial zero-sum subsequence of $S$ with length at most \begin{align*}&(|S|-|W|)-(2m-2+k_m-|I|)n=2n-2+k_n-|W|+|I|n\\&\leq 2mn-2+n+k_n-k_mn-|W|\leq 2mn-1-k_mn-k_n=2mn-1-k,\end{align*} with the final inequality in view of the definition of a weak block decomposition, which contradicts the hypothesis $0\notin \Sigma_{\leq 2mn-1-k}(S)$. Therefore $0\notin \Sigma_{\leq 2m-1-k_m}(S_\sigma)$.

If $S=W\bdot W_0\bdot \ldots\bdot W_{2m-2+k_m}$ is an augmented block decomposition with associated sequence $\wtilde S_\sigma=\sigma(W_0)\bdot S_\sigma$, then $|\wtilde S_\sigma|=2m-1+k_m$ holds by definition.
If $0\in \Sigma_{\leq 2m-2-k_m}(\wtilde S_\sigma)$, then there is some $I\subseteq [0,2m-2+k_m]$ with $1\leq |I|\leq 2m-2-k_m$ and $\prod^\bullet_{i\in I}\sigma(W_i)$ zero-sum.
In such case, since $|W_i|= n$ for all $i\geq 1$ and $|W_0|\leq 3n-1-k_n$,  we find that $\prod^\bullet_{i\in I} W_i$ is a nontrivial zero-sum subsequence of $S$ with length at most $\max\{|I|n, |W_0|+(|I|-1)n\}\leq 3n-1-k_n+(2m-3-k_m)n= 2mn-1-k$, which contradicts the hypothesis $0\notin \Sigma_{\leq 2mn-1-k}(S)$. Therefore $0\notin \Sigma_{\leq 2m-2-k_m}(\wtilde S_\sigma)$.

Suppose $S=W\bdot W_1\bdot \ldots\bdot W_{2m-2+k_m}$ is a block decomposition with associated sequence $S_\sigma$. As noted above Claim B, there exists some $W_0\mid W$ such that $(W\bdot W_0^{[-1]})\bdot W_0\bdot\ldots\bdot W_{2m-2+k_m}$ is an augmented block decomposition with associated sequence $\sigma(W_0)\bdot S_\sigma$.
As already shown, $0\notin \Sigma_{\leq 2m-2-k_m}(\sigma(W_0)\bdot S_\sigma)$ with $|\sigma(W_0)\bdot S_\sigma|=2m-1+k_m$.
By hypothesis, either Conjecture \ref{conj-shortzs} holds for $k_m$ in $C_m\oplus C_m$, or else $k_m\in [1,m-2]$ and Conjecture \ref{conj-shortzs} holds for $k_m+1$ in $C_m\oplus C_m$. In the former case, Conjecture \ref{conj-shortzs} holds for $S_\sigma$ in view of the already established $|S_\sigma|=2m-2+k_m$ and $0\notin \Sigma_{\leq 2m-1-k_m}(S_\sigma)$.
In the latter case, Conjecture \ref{conj-shortzs} holds for $\sigma(W_0)\bdot S_\sigma$ in view of the already established $|\sigma(W_0)\bdot S_\sigma|=2m-1+k_m$ and $0\notin \Sigma_{\leq 2m-2-k_m}(\Sigma(W_0)\bdot S_\sigma)$, which combined with Lemma \ref{lem-k+1-to-k} ensures that it does so with respect to some basis $(f_1,f_2)$ with $\sigma(W_0)=f_1+f_2$ and Conjecture \ref{conj-shortzs} holding for $S_\sigma$. Thus Conjecture \ref{conj-shortzs} holds for $S_\sigma$ in both cases.

Suppose $S=W\bdot W_0\bdot \ldots\bdot W_{2m-2+k_m}$ is an augmented block decomposition with associated sequence $\wtilde S_\sigma=\sigma(W_0)\bdot S_\sigma$. As noted above Claim B, $(W\bdot W_0)\bdot W_1\bdot\ldots\bdot W_{2m-2+k_m}$ is a block decomposition of $S$ with associated sequence $S_\sigma$. As already shown,
$|S_\sigma|=2m-2+k_m$ and $0\notin \Sigma_{\leq 2m-1-k_m}(S_\sigma)$ with Conjecture \ref{conj-shortzs} holding for $S_\sigma$. As a result, if $k_m\in [1,m-2]$, then Lemma \ref{lem-k-to-k+1} implies that Conjecture \ref{conj-shortzs} holds for $\wtilde S_\sigma$. On the other hand, If $k_m=0$, then Conjecture \ref{conj-shortzs} is known to hold without condition for $k_m$ and $k_m+1$ in $C_m\oplus C_m$, ensuring that Conjecture \ref{conj-shortzs} holds for $\wtilde S_\sigma$. Thus Conjecture \ref{conj-shortzs} holds for $\wtilde S_\sigma$ in both cases, completing the claim.
\end{proof}

\subsection*{Claim C} There exists a basis $(\overline e_1,\overline e_2)$ for $\varphi(G)$ such that $\supp(\varphi(S))=\{\overline e_1,\overline e_2,x\overline e_1+\overline e_2\}$   for some $x\in [1,n-1]$ with $\gcd(x,n)=1$ and either $x=1$ or $k_n=n-1$. In particular, any block decomposition $S=W\bdot W_1\bdot\ldots\bdot W_{2m-2+k_m}$ has $\varphi(W)=\overline e_1^{[n-1]}\bdot \overline e_2^{[n-1]}\bdot (x\overline e_1+\overline e_2)^{[k_n]}$ with $\varphi(W_i)\in \{\overline e_1^{[n]}, \overline e_2^{[n]}, (x\overline e_1 +\overline e_2)^{[n]}\}$ for all $i\in [1,2m-2+k_m]$.

\begin{proof}
Let $S=W\bdot W_1\bdot \ldots\bdot W_{2m-2+k_m}$ be an arbitrary block decomposition. Then Claim A ensures that Conjecture \ref{conj-shortzs} holds for $\varphi(W)$, so there exists a basis $(\overline e_1,\overline e_2)$ for $\varphi(G)$ such that \be\label{dalt}\varphi(W)=\overline e_1^{[n-1]}\bdot \overline e_2^{[n-1]}\bdot (x\overline e_1+\overline e_2)^{[k_n]}\ee for some $x\in [1,n-1]$ with $\gcd(x,n)=1$ and either $x=1$ or $k_n=n-1$ (as we have $k_n\in [2,n-1]$). Since $(\overline e_1,\overline e_2)$ is a basis and $\gcd(x,n)=1$, any $n$-term zero-sum sequence with support contained in $\{\overline e_1,\overline e_2,x\overline e_1+\overline e_2\}$ must have support of size $1$. Consequently, if we can show that $|\supp(\varphi(S))|=3$, then any block decomposition $S=W'\bdot W'_1\bdot\ldots\bdot W'_{2m-2+k_m}$ will have $\varphi(W'_i)\in \{\overline e_1^{[n]}, \overline e_2^{[n]}, (x\overline e_1 +\overline e_2)^{[n]}\}$ for all $i\in [1,2m-2+k_m]$.
Moreover, if $k_n=n-1$, then Conjecture \ref{conj-shortzs} must hold for $\varphi(W')$ with $\supp(\varphi(W'))\subseteq \{\overline e_1,\overline e_2,x\overline e_1+\overline e_2\}$, with each term having multiplicity $n-1$, which forces $\varphi(W')=\varphi(W)=\overline e_1^{[n-1]}\bdot \overline e_2^{[n-1]}\bdot (x\overline e_1+\overline e_2)^{[n-1]}$, while for $k_n\leq n-2$ and $x=1$, Conjecture \ref{conj-shortzs} must hold for $\varphi(W')$ with $\supp(\varphi(W'))\subseteq \{\overline e_1,\overline e_2,\overline e_1+\overline e_2\}$, \  $\overline e_1+(\overline e_1+\overline e_2)\neq \overline e_2$ and $\overline e_2+(\overline e_1+\overline e_2)\neq \overline e_1$ (as $n\geq 3$), ensuring that $\varphi(W')=\varphi(W)=\overline e_1^{[n-1]}\bdot \overline e_2^{[n-1]}\bdot (x\overline e_1+\overline e_2)^{[k_n]}$. In both cases, the claim would be complete.
Thus it suffices to show $|\supp(\varphi(S))|=3$. Assume by contradiction that $|\supp(\varphi(S))|>3$, meaning there is some $g\in \supp(S\bdot W_0^{[-1]})$ with $\varphi(g)\notin \{\overline e_1,\overline e_2,x\overline e_1+\overline e_2\}=\supp(\varphi(W))$, and  w.l.o.g. $g\in \supp(W_1)$.

Suppose there were two distinct elements from $\supp(\varphi(W\bdot W_1))$ each with multiplicity at least $n$ in $\varphi(W\bdot W_1)$.  Then it would be possible to re-factorize $W\bdot W_1=W'\bdot W'_1$ with $|W'_1|=|W_1|=n$, with $\varphi(W'_1)$ a zero-sum sequence having  support of size $1$, and with $W'$ containing a zero-sum subsequence of length $n$ and support $1$. But then $S=W'\bdot W'_1\bdot W_2\bdot\ldots\bdot W_{2m-2+k_m}$ would be a block decomposition with $0\in \Sigma_{\leq n}(\varphi(W'))$, contrary to Claim A. So we conclude there can be at most one term from $\varphi(W\bdot W_1)$ with multiplicity at least $n$.

Suppose $\vp_{\varphi(g)}(\varphi(W\bdot W_1))\geq n-1$. Since $\varphi(g)\notin \supp(\varphi(W))$, this ensures $\varphi(g)$ has multiplicity at least $n-1$ in the $n$-term zero-sum sequence $\varphi(W_1)$, which is only possible if $\varphi(W_1)=\varphi(g)^{[n]}$.
In such case,  all terms in $\varphi(W\bdot W_1\bdot g^{[-1]})$ have multiplicity at most $n-1$.
Since we have $|W\bdot W_1\bdot g^{[-1]}|=3n-3+k_n\geq 3n-1\geq \mathsf s_{\leq n}(\varphi(G))$, it follows that there is a nontrivial subsequence $W'_1\mid W\bdot W_1\bdot g^{[-1]}$ with $\varphi(W'_1)$ a zero-sum sequence of length at most $n$. Setting $W'=W\bdot W_1\bdot (W'_1)^{[-1]}$, it follows that $S=W'\bdot W'_1\bdot W_2\bdot\ldots\bdot W_{2m-2+k_m}$ is a block decomposition with $|\supp(\varphi(W'\bdot W'_1))|=|\supp(\varphi(W\bdot W_1))|=4$ and $|\supp(\varphi(W'_1))|>1$ (as $W'_1\mid W\bdot W_1\bdot g^{[-1]}$ with all terms of $\varphi(W\bdot W_1\bdot g^{[-1]})$ having multiplicity at most $n-1$). Replacing the initial block decomposition by  $S=W'\bdot W'_1\bdot W_2\bdot\ldots\bdot W'_{2m-2+k_m}$ and repeating all arguments from the start (including possibly redefining the elements $\overline e_1$, $\overline e_2$ and $x\overline e_1+\overline e_2$), we see that we can w.l.o.g. assume there is some $g\in \supp(W_1)$ with
$$\varphi(g)\notin \supp(\varphi(W))=\{\overline e_1,\overline e_2,x\overline e_1+\overline e_2\}\quad\und\quad \vp_{\varphi(g)}(\varphi(W\bdot W_1))\leq n-2.$$

Let $g_1,g_2\in \supp(W)$ be elements with $\varphi(g_1)=\overline e_1$ and $\varphi(g_2)=\overline e_2$.
Now we see that  $W\bdot W_1\bdot g_1^{[-1]}\bdot g_2^{[-1]}\bdot g^{[-1]}$ is a sequence of length $3n-5+k_n\geq 3n-3$.
As a result, if $\varphi(W\bdot W_1\bdot g_1^{[-1]}\bdot g_2^{[-1]}\bdot g^{[-1]})$ does not contain a zero-sum subsequence of length at most $n$, then the established case of Conjecture \ref{conj-shortzs}.4 implies that $\supp(\varphi(W\bdot W_1\bdot g_1^{[-1]}\bdot g_2^{[-1]}\bdot g^{[-1]}))=3$ with each of these three terms occurring with multiplicity $n-1$ in  $\varphi(W\bdot W_1\bdot g_1^{[-1]}\bdot g_2^{[-1]}\bdot g^{[-1]})$.
However, since $n\geq 3$, \eqref{dalt} ensures that  $\overline e_1,\,\overline e_2\in \supp(\varphi(W\bdot W_1\bdot g_1^{[-1]}\bdot g_2^{[-1]}\bdot g^{[-1]}))$, meaning $\overline e_1$ and $\overline e_2$ each have multiplicity $n-1$ in $\varphi(W\bdot W_1\bdot g_1^{[-1]}\bdot g_2^{[-1]}\bdot g^{[-1]})$, whence $\vp_{\overline e_1}(\varphi(W\bdot W_1))\geq n$ and $\vp_{\overline e_2}(\varphi(W\bdot W_1))\geq n$ (since $\varphi(g_1)=\overline e_1$ and $\varphi(g_2)=\overline e_2$), contradicting that we showed earlier that at most one term of $\varphi(W\bdot W_1)$ can have multiplicity at least $n$.
Therefore we instead conclude that there is some subsequence $W'_1\mid W\bdot W_1\bdot g_1^{[-1]}\bdot g_2^{[-1]}\bdot g^{[-1]}$ with $\varphi(W'_1)$ a length $n$ zero-sum (in view of Claim A).
Setting $W'=W\bdot W_1\bdot (W'_1)^{[-1]}$, we find that $S=W'\bdot W'_1\bdot W_2\bdot\ldots\bdot W_{2m-2+k_m}$ is a block decomposition with $g,g_1,g_2\in \supp(W')$ and $\varphi(g_1)=\overline e_1$, $\varphi(g_2)=\overline e_2$ and $\varphi(g)\notin \{\overline e_1,\overline e_2,x\overline e_1+\overline e_2\}$.

Applying our initial argument in Claim C using this new block decomposition, we immediately obtain a contradiction unless Conjecture \ref{conj-shortzs} holds for $\varphi(W')$ with $\supp(\varphi(W'))=\{\overline e_1,\overline e_2,\varphi(g)\}$. If $k_n=n-1$, this forces each of the terms $\overline e_1$, $\overline e_2$ and $\varphi(g)$ to have multiplicity $n-1$ in $\varphi(W')$, contradicting that $W'\mid W\bdot W_1$ with the multiplicity of $\varphi(g)$ in $\varphi(W\bdot W_1)$ at most $n-2$ (as shown above). Therefore we must have $k_n\leq n-2$, in which case $x=1$, and then, as $\varphi(g)$ has multiplicity at most $n-2$, the only way Conjecture \ref{conj-shortzs} can hold for $\varphi(W')$ is if it does so with basis $(\overline e_1,\overline e_2)$ and $\varphi(g)=\overline e_1+\overline e_2$, contradicting that $\varphi(g)\notin \{\overline e_1,\overline e_2,x\overline e_1+\overline e_2\}=\{\overline e_1,\overline e_2,\overline e_1+\overline e_2\}$. This completes Claim C.
\end{proof}

We define a term $g\in \supp(S)$ to be \emph{good} if $g,\,h\in \supp(S)$ with $\varphi(g)=\varphi(h)$ implies $g=h$.
 A term $g\in \supp(\varphi(S))$ is \emph{good} if $\supp(S)$ contains exactly one element from $\varphi^{-1}(g)$. Then, for $g\in \supp(S)$, we find that  $\varphi(g)$ is good if and only if $g$ is good.

 Let $(\overline e_1,\overline e_2)$ be an arbitrary basis for $\ker \varphi$ for which Claim C holds with \be\label{supp-jam}\supp(\varphi(S))=\{\overline e_1,\overline e_2,x\overline e_1+\overline e_2\},\ee where $x\in [1,n-1]$ with $\gcd(x,n)=1$ and either $x=1$ or $k_n=n-1$. Let $e_1,\,e_2\in G$ be, for the moment, arbitrary  representatives for $\overline e_1$ and $\overline e_2$, so $\varphi(e_1)=\overline e_1$, $\varphi(e_2)=\overline e_2$ and $\varphi(xe_1+e_2)=x\overline e_1+\overline e_2$.
  We divide the remainder of the proof into two main cases. We remark that the cases $k_m\in [1,m-2]$ could be handled by the methods of either CASE 1 or 2, but there is some simplification to the arguments by including them in CASE 2.

 \subsection*{CASE 1:} $k_m=m-1$.

We begin with the following claim.

\subsection*{Claim D.1} All terms $g\in \supp(S)$ are good.

 \begin{proof}
 Let $S=W\bdot W_1\bdot\ldots\bdot W_{2m-2+k_m}$ be an arbitrary block decomposition. In view of Claim C, let $g_1,g_2,g_3\in \supp(W)$ be arbitrary  with $\varphi(g_1)=\overline e_1$, $\varphi(g_2)=\overline e_2$ and $\varphi(g_3)=x\overline e_1+\overline e_2$.
 Let $S_\sigma=\sigma(W_1)\bdot\ldots\bdot \sigma(W_{2m-2+k_m})$ be the associated sequence, which satisfies  Conjecture \ref{conj-shortzs} by Claim B.
 If $h\in \supp(S\bdot W^{[-1]})$, say $h\in \supp(W_j)$, then $\varphi(h)=\varphi(g_k)$ for some $k\in [1,3]$ by Claim C.
Setting $W'=W\bdot g_k^{[-1]}\bdot h$, $W'_j=W_j\bdot h^{[-1]}\bdot g_k$ and $W'_i=W_i$ for all $i\neq j$, we obtain a new block decomposition $S=W'\bdot W'_1\bdot\ldots\bdot W'_{2m-2+k_m}$ with associated sequence $S'_\sigma=\sigma(W'_1)\bdot\ldots\bdot \sigma(W'_{2m-2+k_m})$ also satisfying Conjecture \ref{conj-shortzs} by Claim B. Since $k_m=m-1\geq 1$, we can then apply Lemma \ref{lem-perturb-solo} to conclude that $S'_\sigma=
S_\sigma$ and $\sigma(W'_j)=\sigma(W_j)$. Since $\sigma(W'_j)=\sigma(W_j)-h+g_k$, this forces $g_k=h$. Ranging over all $h\in \supp(S\bdot W^{[-1]})$ and $g_k\in \supp(W)$ with $\varphi(g_k)=\varphi(h)$ now shows that $\varphi(h)$ is good.

In summary, this argument shows that all terms occurring in $\varphi(S\bdot W^{[-1]})$ are good. Consequently, since Claim C ensures that $|\supp(\varphi(S))|=3$, the only way Claim D can fail is if $|\supp(\varphi(S\bdot W^{[-1]}))|\leq 2$ with all terms in $\supp(\varphi(S\bdot W^{[-1]}))$ good. In view of Claim C, each $\varphi(W_i)$ with $i\in [1,2m-2+k_m]$ consists of a single term from $\{\overline e_1,\overline e_2,x\overline e_1 +\overline e_2\}$ repeated $n$ times.
As a result, if $|\supp(\varphi(S\bdot W^{[-1]}))|\leq 2$, then the Pigeonhole Principle ensures that, among the terms from the $2m-2+k_m=3m-3$  blocks $W_i$ with $i\in [1,2m-2+k_m]$, at least $\lceil\frac{3m-3}{2}\rceil \geq m$ of these blocks $\varphi(W_i)$ must have support equal to the same element, which is then good, ensuring that there is only a single distinct term among these blocks $W_i$. In such case, $S$ has a term with multiplicity at least $mn$, contradicting that $0\notin \Sigma_{\leq 2mn-1-k}(S)$. Therefore, we instead conclude that $|\supp(\varphi(S\bdot W^{[-1]}))|=3$, meaning all $g\in \supp(S)$ are good as explained above.
\end{proof}

Since $k_m=m-1$ and $k=k_mn+k_n\in [2,mn-2]$, we have $k_n\neq n-1$, meaning $k_n\in [2,n-2]$ with $n\geq 4$. This ensures that $x=1$ in Claim C. In view of Claim D.1, \be\label{supp-alpha}\supp(S)=\{e_1,e_2,e_1+e_2+\alpha\}\ee for some  $e_1,e_2\in G$ and $\alpha\in \ker \varphi$ with $\varphi(e_1)=\overline e_1$, $\varphi(e_2)=\overline e_2$ and $\varphi(e_1+e_2+\alpha)=\overline e_1+\overline e_2$.
Let $S=W\bdot W_1\bdot \ldots\bdot W_{2m-2+k_m}$ be an arbitrary block decomposition. Then
Claim C implies that   $$W=e_1^{[n-1]}\bdot  e_2^{[n-1]}\bdot (e_1+e_2+\alpha)^{[k_n]}$$ with $k_n\in [2,n-2]$. Moreover, we can  partition $[1,2m-2+k_m]=I_1\cup I_2\cup I_3$ with $I_j$ consisting of all indices $i\in [1,2m-2+k_m]$ such that $W_i=e_j^{[n]}$ (for $j\in [0,1])$ or such that $W_i=(e_1+e_2+\alpha)^{[n]}$ (for $j=3$). If $I_j=\emptyset$ for some $j\in [1,3]$, then, since $2m-2+k_m\geq 2m-1$ (as $k_m=m-1\geq 1$), the Pigeonhole Principle ensures that $|I_{j'}|\geq m$ for some $j'\in [1,3]\setminus \{j\}$. In such case, $S$ has a term with multiplicity at least $mn$, contradicting that $0\notin \Sigma_{\leq mn}(S)$ by hypothesis. Therefore we may assume each $I_j$ for $j\in [1,3]$ is nonempty.

Since $I_3\neq \emptyset$, let $j\in I_3$. Set $W'=W\bdot e_1^{[-1]}\bdot e_2^{[-1]}\bdot (e_1+e_2+\alpha)$, $W'_j=W_j\bdot (e_1+e_2+\alpha)^{[-1]}\bdot e_1\bdot e_2$, and  $W'_i=W_i$ for all $i\neq j$.
Since $|W'|=|W|-1=2n-3+k_n\geq n-1+2k_n$ (in view of $k_n\in [2,n-2]$), we see that $S=W'\bdot W'_1\bdot\ldots\bdot W'_{2m-2+k_m}$ is a weak block decomposition with associated sequence $S'_\sigma=\sigma(W'_1)\bdot\ldots\bdot \sigma(W'_{2m-2+k_m})$. Since Conjecture \ref{conj-shortzs} is known to always  hold for $k_m=m-1$, it follows in view of Claim B that Conjecture \ref{conj-shortzs} holds for $S'_\sigma$.
As a result, since the sequence $S'_\sigma$ is obtained from $S_\sigma$ by replacing the term $\sigma(W_j)$ by $\sigma(W'_j)=\sigma(W_j)-\alpha$, and since Conjecture \ref{conj-shortzs} holds for $S_\sigma$ by Claim B, we can apply  Lemma \ref{lem-perturb-solo} (as $k_m=m-1\geq 1$) to conclude $\alpha=0$. But now  $\supp(S)\subseteq \{e_1,e_2,e_1+e_2\}$ by \eqref{supp-alpha}, so applying Lemma \ref{lem-genPropB} yields the desired structure for $S$, completing CASE 1.

\subsection*{CASE 2:} $k_m\in [0,m-2]$.

Let $S=W\bdot W_1\bdot \ldots\bdot W_{2m-2+k_m}$ be an arbitrary block decomposition with associated sequence $S_\sigma$.  Then
Claim C implies that   \be\label{summerhug}\varphi(W)=\overline e_1^{[n-1]}\bdot  \overline e_2^{[n-1]}\bdot (x\overline e_1+\overline e_2)^{[k_n]}\ee with $k_n\in [2,n-1]$, with $x\in [1,n-1]$ and $\gcd(x,n)=1$, and with either $x=1$ or $k_n=n-1$. Since $n\geq 3$ (as noted at the start of the proof), $x=n-1$ is only possible if $k_n=n-1$, in which case Claim C also holds replacing the basis $(f_1,f_2)$ by $(f_1,xf_1+f_2)=(f_1,-f_1+f_2)$ with $f_2=f_1+(xf_1+f_2)$. In such case, by using this alternative basis in Claim C, we obtain $x=1<n-1$. This allows us to  w.l.o.g. assume $x\in [1,n-2]$ with $\gcd(x,n)=1$.

If $x=1$,  then \eqref{summerhug} and $k_n\in [2,n-1]$ ensures that there are sequences $U_0\mid W$ and $V_0\mid W$ with $$\varphi(U_0)=\overline e_1^{[n-k_n]}\bdot \overline e_2^{[n-k_n]}\bdot (\overline e_1+\overline e_2)^{[k_n]}\quad\und\quad \varphi(V_0)=\overline e_1^{[n-k_n+1]}\bdot \overline e_2^{[n-k_n+1]}\bdot (\overline e_1+\overline e_2)^{[k_n-1]}.$$ Since $k_n\in [2,n-1]$, we find that
\begin{align}\label{whatsinW0}&\supp(\varphi(U_0))=\supp(\varphi(V_0))=\{\overline e_1, \overline e_2,\overline e_1+\overline e_2\}, \\ \label{whatsinW}&\overline e_1,\overline e_2\in \supp(\varphi(W\bdot U_0^{[-1]}))\quad\und\quad \overline e_1+\overline e_2\in \supp(\varphi(W\bdot V_0^{[-1]})).
\end{align}  On the other hand, if $x\neq 1$, then $x\in [2,n-2]$ with $\gcd(x,n)=1$, $n\geq 5$ and $k_n=n-1$. In such case, let  $x^*\in [2,n-2]$  be the multiplicative inverse of $-x$, so  $$xx^*\equiv -1\mod n.$$
In this case, in view of \eqref{summerhug} and $n\geq 5$, there is a sequence $W_0\mid W$ with $$\varphi(W_0)=\overline e_1\bdot \overline e_2^{[n-x^*]}\bdot (x\overline e_1+\overline e_2)^{[x^*]}.$$
In view of $x^*\in [2,n-2]$ and $k_n=n-1$, we have \begin{align}\label{whatsinW0-x}&\supp(\varphi(W_0))=\supp(\varphi(W\bdot W_0^{[-1]}))=\{\overline e_1, \overline e_2,x\overline e_1+\overline e_2\}\quad \und\\ \label{whatsinW-x}&\vp_{\overline e_1}(\varphi(W\bdot W_0^{[-1]}))=n-2\geq x.
\end{align}
Since $|W_0|=n+1$, $|U_0|=2n-k_n$ and $|V_0|=2n-k_n+1$ are all at most $3n-1-k_n$ (in view of $n\geq 2$ and $k_n\leq n-1\leq 2n-2$), it follows that $S=(W\bdot W_0^{[-1]})\bdot W_0\bdot W_1\ldots\bdot W_{2m-2+k_m}$, $S=(W\bdot U_0^{[-1]})\bdot U_0\bdot W_1\ldots\bdot W_{2m-2+k_m}$ and $S=(W\bdot V_0^{[-1]})\bdot V_0\bdot W_1\ldots\bdot W_{2m-2+k_m}$ are each augmented block decompositions of $S$ (when  they are defined), with respective associated sequences $\sigma(W_0)\bdot S_\sigma$, $\sigma(U_0)\bdot S_\sigma$, and $\sigma(V_0)\bdot S_\sigma$.
In view of Claim B and the case hypothesis $k_m\in [0,m-2]$, Conjecture \ref{conj-shortzs} holds for all of these associated sequences.

We continue with the following claim.

\subsection*{Claim D.2} All terms $g\in \supp(S)$ are good.

 \begin{proof}
 Suppose $x\neq 1$. Consider the augmented block decomposition $S=(W\bdot W_0^{[-1]})\bdot W_0\bdot W_1\ldots\bdot W_{2m-2+k_m}$.  Then \eqref{whatsinW0-x} ensures that there are $g_1,g_2,g_2\in \supp(W\bdot W_0^{[-1]})$ with $\varphi(g_1)=\overline e_1$, $\varphi(g_2)=\overline e_2$ and $\varphi(g_3)=x\overline e_1+\overline e_2$. Taking an arbitrary $g\in \supp(W_0\bdot\ldots \bdot W_{2m-2+k_m})$ and exchanging $g$ with $g_j$, where $g_j$ with $j\in [1,3]$ is the element with $\varphi(g_j)=\varphi(g)$, results in a new augmented block decomposition. Applying Lemma \ref{lem-perturb-solo}, we find that the associated sequence for the new block decomposition must equal that of the original one, forcing $g=g_j$. Ranging over all possible $g_j\in \supp(W\bdot W_0^{[-1]})$ and $g\in \supp(W_0\bdot\ldots\bdot W_{2m-2+k_m})$ with $\varphi(g)=\varphi(g_j)$, it follows that $g$ is good. This shows that all terms from $W_0\bdot\ldots\bdot W_{2m-2+k_m}$ are good, and in view of \eqref{whatsinW0-x}, each possible term $\overline e_1$, $\overline e_2$ and $x\overline e_1+\overline e_2$ occurs in $\varphi(W_0\bdot\ldots\bdot W_{2m-2+k_m})$, ensuring that every $g\in \supp(S)$ is good.

 Next suppose $x=1$. Repeating the above argument using the augmented block decomposition $S=(W\bdot U_0^{[-1]})\bdot U_0\bdot W_1\ldots\bdot W_{2m-2+k_m}$ in place of $S=(W\bdot W_0^{[-1]})\bdot W_0\bdot W_1\ldots\bdot W_{2m-2+k_m}$, and using \eqref{whatsinW} and \eqref{whatsinW0} in place of \eqref{whatsinW0-x} shows that $\overline e_1$ and $\overline e_2$ are both good. Repeating the above argument using the augmented block decomposition $S=(W\bdot V_0^{[-1]})\bdot V_0\bdot W_1\ldots\bdot W_{2m-2+k_m}$ in place of $S=(W\bdot W_0^{[-1]})\bdot W_0\bdot W_1\ldots\bdot W_{2m-2+k_m}$, and using \eqref{whatsinW} and \eqref{whatsinW0} in place of \eqref{whatsinW0-x} shows that $\overline e_1+\overline e_2$ is  good. As $\supp(\varphi(S))=\{\overline e_1,\overline e_2,\overline e_1+\overline e_2\}$, this ensures that all terms $g\in \supp(S)$ are good.
 \end{proof}

In view of Claim D.2, there are $e_1,e_2\in G$ and $\alpha\in \ker\varphi$ such that $$\supp(S)=\{e_1,e_2,xe_1+e_2+\alpha\}$$ with $\varphi(e_1)=\overline e_1$, $\varphi(e_2)=\overline e_2$ and $\varphi(e_1+e_2+\alpha)=x\overline e_1+\overline e_2$.

Suppose $x\neq 1$. Then \eqref{whatsinW0-x} and \eqref{whatsinW-x} ensure that there is  a subsequence $T\mid W\bdot W_0^{[-1]}$  with $T=e_1^{[x]}\bdot e_2$. In view of \eqref{whatsinW0-x}, we have $xe_2+e_3+
\alpha\in \supp(W_0)$, so set $W'_0= W_0\bdot (xe_1+e_2+\alpha)^{[-1]}\bdot T$. Since $|W'_0|=|W_0|+x=n+1+x\leq 2n= 3n-1-k_n$, it follows that $(W\bdot (W'_0)^{[-1]})\bdot W'_0\bdot W_1\bdot\ldots\bdot W_{2m-2+k_m}$ is an augmented block decomposition with  associated sequence satisfying Conjecture \ref{conj-shortzs} by Claim B. Applying Lemma \ref{lem-perturb-solo}, we find that the associated sequence for the new block decomposition must equal that of the original one, which is only possible if $\alpha=0$. We now have $\supp(S)=\{e_1,e_2,xe_1+e_2+\alpha\}=\{e_1,e_2,xe_1+e_2\}$, so applying Lemma \ref{lem-genPropB} yields the desired structure for $S$.

Next Suppose $x=1$. Then \eqref{whatsinW}  ensures that there is  a subsequence $T\mid W\bdot U_0^{[-1]}$  with $T=e_1\bdot e_2$. In view of \eqref{whatsinW0}, we have $e_1+e_2+\alpha\in\supp(U_0)$, so set $U'_0= U_0\bdot (e_1+e_2+\alpha)^{[-1]}\bdot T$. Since $|U'_0|=|U_0|+1=2n-k_n+1\leq 3n-1-k_n$, it follows that $(W\bdot (U'_0)^{[-1]})\bdot U'_0\bdot W_1\bdot\ldots\bdot W_{2m-2+k_m}$ is an augmented block decomposition with  associated sequence satisfying Conjecture \ref{conj-shortzs} by Claim B. Applying Lemma \ref{lem-perturb-solo}, we find that the associated sequence for the new block decomposition must equal that of the original one, which is only possible if $\alpha=0$. As before, we now have $\supp(S)=\{e_1,e_2,e_1+e_2+\alpha\}=\{e_1,e_2,e_1+e_2\}$, so applying Lemma \ref{lem-genPropB} yields the desired structure for $S$, which completes CASE 2 and the proof.
\end{proof}

\begin{proof}[Proof of Theorem \ref{thm-mult}]
Theorem \ref{thm-mult} follows directly from Propositions \ref{prop-main-k_n-01} and \ref{prop-kn-2n-1}
\end{proof}

We conclude the paper by giving the short proofs of Corollaries \ref{cor-mult-2,3}, \ref{cor-mult}  and \ref{cor-bigspec}.

\begin{proof}[Proof of Corollary \ref{cor-mult-2,3}]
It suffices in view of Theorem \ref{thm-mult} to know Conjecture \ref{conj-shortzs} holds for all $k\in [0,p-1]$ in $G=C_p\oplus C_p$, for $p\in \{2,3,5,7\}$. As noted in the introduction,  Conjecture \ref{conj-shortzs} is known for $k\leq 1$,  $k=p-1$ and $k\in [2,\frac{2p+1}{3}]$ in $C_p\oplus C_p$ with $p$ prime. Since,  $p-2\leq \frac{2p+1}{3}$ for $p\leq 7$, this means Conjecture \ref{conj-shortzs} is known for all $k\in [0,p-1]$ in $C_p\oplus C_p$ for $p\leq 7$ prime, as required.
\end{proof}

\begin{proof}[Proof of Corollary \ref{cor-mult}]
In view of Corollary \ref{cor-mult-2,3}, we can assume $n\geq 11$.  As noted in the introduction, Conjecture \ref{conj-shortzs} is known for $k\leq \frac{2n+1}{3}$ in a $p$-group $C_n\oplus C_n$ provided $p\nmid k$ and $n\geq 5$. It remains to show Conjecture \ref{conj-shortzs} holds for $k=rp$ with $r\in [1,\frac{2n+1}{3p}]$. If $n=p$, there is nothing to show, so we assume $n=p^s$ with $s\geq 2$, and proceed by induction on $s$ with the base $s=1$ of the induction complete. We have $rp=k\leq \frac{2p^s+1}{3}$, ensuring that $r\leq \frac{2p^{s-1}+1}{3}$. Thus, by induction hypothesis, Conjecture \ref{conj-shortzs} holds for $r$ in $C_{p^{s-1}}\oplus C_{p^{s-1}}$, while Conjecture \ref{conj-shortzs} holds in general for $k_{p}:=0$ in $C_p\oplus C_p$ (as noted in the introduction). As a result, Theorem \ref{thm-mult} (applied with $n=p$ and $m=p^{s-1}$) implies that Conjecture \ref{conj-shortzs} holds for $k=rp$ in $C_{p^s}\oplus C_{p^s}$, completing the induction and the proof.
\end{proof}

\begin{proof}[Proof of Corollary \ref{cor-bigspec}]
 Write $n=bd$ with $b$ and $d$ proper nontrivial divisors of $n$.
 As noted in the introduction, Conjecture \ref{conj-shortzs} holds for $k=d-1$ and for $k=1$ in $C_d\oplus C_d$;  if $b=2$, then Conjecture \ref{conj-shortzs} holds for $k=b-2=0$ in $C_b\oplus C_b$; and  if $b\geq 3$, then Conjecture \ref{conj-shortzs}  holds for $k+1=b-1$ in $C_b\oplus C_b$. In both of the latter two cases, since $d-1\geq 1$, applying Theorem \ref{thm-mult} using $n=d$ and $m=b$ shows that Conjecture \ref{conj-shortzs} holds for $k=n-d-1=(b-2)d+(d-1)$ in $C_{bd}\oplus C_{bd}=C_n\oplus C_n$, and applying Theorem \ref{thm-mult} using $n=d$ and $m=b$ shows that Conjecture \ref{conj-shortzs} holds for $k=n-2d+1=(b-2)d+1$ in $C_{bd}\oplus C_{bd}=C_n\oplus C_n$, as desired.
\end{proof}


\begin{thebibliography}{99}

\bibitem{propB-app4} P. Baginski, A. Geroldinger, D. J.  Grynkiewicz and A. Philipp,  Products of two atoms in Krull monoids and arithmetical characterizations of class groups, \emph{European J. Combin.} 34 (2013), no. 8, 1244--1268.

\bibitem{PropB-smallcase}  G. Bhowmik, I.  Halupczok, J. C.  Schlage-Puchta,  The structure of maximal zero-sum free sequences, \emph{Acta Arith.} 143 (2010), no. 1, 21--50.


\bibitem{Cohen}  G. Cohen and  G. Zemor, Subset sums and coding theory, Ast\'erisque 258 (1999) 327--339.

\bibitem{S_<k-invariant-origin-delorme-ordaz} C. Delorme, O. Ordaz and D. Quiroz, Some Remarks on Davenport Constant, \emph{Discrete Math.} 237 (2001), 119--128.

\bibitem{emde-propC-prime}
P. van Emde Boas, A combinatorial problem on finite abelian
groups II, \emph{Math. Centrum Amsterdam Afd. Zuivere Wisk.},
1969(ZW-007):60pp., 1969.


\bibitem{Boas-K-rk2-Dav} P. van Emde Boas and D.  Kruyswijk,  A combinatorial problem on finite Abelian groups, \emph{Math. Centrum Amsterdam Afd. Zuivere Wisk.} 1967 (1967), ZW-009, 27 pp.

\bibitem{freeze}  M. Freeze and W. Schmid, Remarks on a generalization of the Davenport constant, \emph{Discrete Math.} 310 (2010), 3373--3389.

\bibitem{propBGaoGer-multof2} W. Gao and A. Geroldinger, On zero-sum sequences in $\Z/n\Z\oplus \Z/n\Z$, \emph{Integers} 3
(2003), \#A8.

\bibitem{PropB->PropC}
W. Gao and A. Geroldinger, On Long Minimal Zero Sequences in Finite Abelian Groups, \emph{Periodica Mathematica Hungarica} 38 (1999), 179--211.

\bibitem{PropB} W. Gao, A. Geroldinger, D. J. Grynkiewicz, Inverse Zero-Sum Problems III, \emph{Acta Arithmetica} 141 (2010), no. 3, 103--152.



\bibitem{Gao-liu} W. Gao, Y. Li, C. Liu and Y. Qu, Product-one subsequences over subgroups of a finite group, \emph{Acta Arithmetica} 189 (2019), 209--221.

\bibitem{Cong-zs}  A. Geroldinger, D. J.  Grynkiewicz and W. A Schmid,  Zero-sum problems with congruence conditions, \emph{Acta Math. Hungar.} 131 (2011), no. 4, 323--345.


\bibitem{PropB-app6}  A. Geroldinger, D. J.  Grynkiewicz and P.  Yuan,  On products of $k$ atoms II, \emph{Mosc. J. Comb. Number Theory} 5 (2015), no. 3, 3--59.

\bibitem{Alfredbook} A. Geroldinger and F. Halter-Koch, \emph{Non-unique Factorizations: Algebraic, Combinatorial and Analytic Theory}, Pure and Applied Mathematics, vol. 278, Chapman
and Hall/CRC, 2006.


\bibitem{Ger-Ruzsa-book} A. Geroldinger and I. Ruzsa,
\emph{Combinatorial number theory and additive group theory},
Courses and seminars from the DocCourse in Combinatorics and Geometry held in Barcelona, 2008. Advanced Courses in Mathematics. CRM Barcelona. Birkh\"auser Verlag, Basel, 2009. xii+330 pp.


\bibitem{PropB-app7}  A. Geroldinger and W. A. Schmid,  A characterization of class groups via sets of lengths, \emph{J. Korean Math. Soc.} 56 (2019), no. 4, 869--915.

\bibitem{PropB-app2}  B. Girard,  On the existence of zero-sum subsequences of distinct lengths, \emph{Rocky Mountain J. Math.} 42 (2012), no. 2, 583--596.




\bibitem{PartitionThm-nat} D. J. Grynkiewicz, Representing Sequence Subsums as Sumsets of Near Equal Sized Sets, in \emph{Combinatorial Number Theory IV}, ed. M. Nathanson, Springer (2021), Ch. 11.



\bibitem{Gbook} D. Grynkiewicz, \emph{Structural Additive Theory}, Developments in Mathematics 30 (2013),
Springer.

\bibitem{hamconj} D. J. Grynkiewicz, On a conjecture of Hamidoune for subsequence sums, \emph{Integers} 5(2) (2005), Paper A07, 11p.



\bibitem{PropBfix} D. J. Grynkiewicz, Inverse Zero-Sum Problems III: Addendum, preprint, https://arxiv.org/abs/2107.10619.

\bibitem{chaoII} D. J. Grynkiewicz and Chao Liu, A Multiplicative Property for Zero-Sums II, preprint.







\bibitem{S_<k-invariant2/3p}
 D. J. Grynkiewicz, Chunlin Wang and  Kevin Zhao, The structure of a sequence with prescribed zero-sum subsequences, \emph{Integers} 20 (2020), Paper No. A3, 31 pp.



\bibitem{olson-pgroup} J. E. Olson,  A combinatorial problem on finite Abelian groups I.
\emph{J. Number Theory} 1 (1969), 8--10.

\bibitem{olson-rk2} J. E. Olson,
A combinatorial problem on finite Abelian groups II.
\emph{J. Number Theory} 1 (1969), 195--199.



\bibitem{propB-app1}  O. Ordaz, A. Philipp, I. Santos and  W. A. Schmid,  On the Olson and the strong Davenport constants, \emph{J. Th\'eor. Nombres Bordeaux} 23 (2011), no. 3, 715--750.




\bibitem{PropB-app8}  J. Peng, Y. Qu and  Y. Li,  Inverse problems associated with subsequence sums in $C_p\oplus C_p$, \emph{Front. Math. China} 15 (2020), no. 5, 985--1000.

\bibitem{Reiher-propB} C. Reiher, \emph{A proof of the theorem according to which every prime number possesses
Property B}, Ph.D Dissertation, University of Rostock, 2010.


\bibitem{Roy-s<k} B. Roy and R. Thangadurai, On zero-sum subsequences in a finite abelian $p$-group of length not exceeding a given number, \emph{Journal of Number Theory} 191 (2018), 246--257.

\bibitem{schmid-propC-exact} W. A. Schmid, Restricted inverse zero-sum problems in groups of rank 2,
\emph{Quarterly journal of mathematics} 63 (2012), no. 2, 477--487.


\bibitem{Schmid-propB}  W. A. Schmid,  Inverse zero-sum problems II, \emph{Acta Arith.} 143 (2010), no. 4, 333--343.


\bibitem{kevin-S_<k-invariant} C. Wang and K. Zhao, On zero-sum subsequences of length not exceeding a given
number, \emph{J. Number Theory} 176 (2017), 365--374






\end{thebibliography}
\end{document}